\newcommand{\br}[3]{{$#1$}$\lower4pt\hbox{$\tp\atop\raise4pt \hbox{$\scriptscriptstyle{#2}$}$} ${$#3$}}
\newcommand{\tw}[3]{{$#1$}${\,\scriptscriptstyle {#2}}\atop\raise9pt\hbox{$\scriptstyle\tp$} ${$#3$}}
\newcommand{\ttps}[2]{{#1}\raise5pt\hbox{$\lower12pt\hbox{$\scriptstyle\tp$}\atop \lower0pt\hbox{$\tilde\;$}$}\raise4.5pt\hbox{${\scriptstyle{#2}}$}}
\newcommand{\st}[1]{\mbox{${\,\scriptscriptstyle {#1}}\atop\raise5.5pt\hbox{$*$}$}}
\newcommand{\rd}[1]{\mbox{${\,\scriptscriptstyle {#1}}\atop\raise5.5pt\hbox{$\bullet$}$}}
\newcommand{\rt}[1]{\otimes_\chi}
\newcommand{\lt}[1]{\mbox{${\,\scriptscriptstyle {#1}}\atop\raise5.5pt\hbox{$\ltimes$}$}}
\newcommand{\btr}{\raise1.2pt\hbox{$\scriptstyle\blacktriangleright$}\hspace{2pt}}
\newcommand{\btl}{\raise1.2pt\hbox{$\scriptstyle\blacktriangleleft$}\hspace{2pt}}
\newcommand{\lcr}{\raise1.0pt \hbox{${\scriptstyle\rightharpoonup}$}}
\newcommand{\rcr}{\raise1.0pt \hbox{${\scriptstyle\leftharpoonup}$}}
\newcommand{\ttp}{{\lower12pt\hbox{$\tp$}\atop \hbox{$\tilde\;$}}}
\newcommand{\gr}{ \mathrm{gr }}
\newcommand{\A}{{A}}
\newcommand{\Ru}{\mathcal{R}}
\newcommand{\Mc}{\mathcal{M}}
\newcommand{\Q}{\mathcal{Q}}
\newcommand{\C}{\mathbb{C}}
\newcommand{\tp}{\otimes}
\newcommand{\vt}{\vartheta}
\newcommand{\U}{U}
\newcommand{\ve}{\varepsilon}
\newcommand{\dt}{\delta}
\newcommand{\op}{\oplus}
\newcommand{\la}{\lambda}
\newcommand{\End}{\mathrm{End}}
\newcommand{\Span}{\mathrm{Span}}
\newcommand{\Tr}{\mathrm{Tr}}
\newcommand{\Rm}{\mathrm{R}}
\newcommand{\g}{\mathfrak{g}}
\renewcommand{\b}{\mathfrak{b}}
\newcommand{\h}{\mathfrak{h}}
\newcommand{\mub}{\boldsymbol{\mu}}
\newcommand{\nb}{\boldsymbol{n}}
\newcommand{\s}{\mathfrak{s}}
\renewcommand{\o}{\mathfrak{o}}
\newcommand{\eps}{\epsilon}
\newcommand{\nn}{\nonumber}
\newcommand{\p}{\mathfrak{p}}
\renewcommand{\l}{\mathfrak{l}}
\renewcommand{\c}{\mathfrak{c}}
\newcommand{\al}{\alpha}
\newcommand{\be}{\begin{eqnarray}}
\newcommand{\ee}{\end{eqnarray}}
\newtheorem{thm}{Theorem}[section]
\newtheorem{propn}[thm]{Proposition}
\newtheorem{lemma}[thm]{Lemma}
\newtheorem{corollary}[thm]{Corollary}
\newtheorem{definition}[thm]{Definition}
\newcommand{\parag}{\advance\prg by1 {\noindent\bf\thesection.\the\prg\hspace{6pt}}}
\begin{document}
\title{Quantization of borderline Levi conjugacy classes of orthogonal groups.}
\author{
Thomas Ashton and Andrey Mudrov\footnote{
This research is supported in part by the RFBR grant 12-01-00207a.} \vspace{20pt}\\
\small Department of Mathematics,\\ \small University of Leicester, \\
\small University Road,
LE1 7RH Leicester, UK\\
\small e-mail: am405@le.ac.uk\\
}

\date{}
\maketitle

\begin{abstract}
We construct equivariant quantization of a special family of Levi conjugacy classes
of the complex orthogonal group $SO(N)$, whose stabilizer
contains a Cartesian factor $SO(2)\times SO(P)$, $1\leqslant P<N$, $P\equiv N \mod 2$.
\end{abstract}

{\small \underline{Mathematics Subject Classifications}: 81R50, 81R60, 17B37.
}

{\small \underline{Key words}: Quantum groups, deformation quantization, conjugacy classes, representation theory.
}
\section{Introduction}
This continuation of \cite{M1}  is devoted to equivariant quantization of a special family of conjugacy classes
in the complex algebraic group $G=SO(N)$. This work completes construction of quantum semisimple conjugacy classes of $SO(N)$
and, generally, of all simple groups of the infinite series.
Classes of our present concern have isotropy subgroups with
a Cartesian factor $SO(2)\times SO(P)$, where $P$ is of the same parity as $N$. Due to the isomorphism $GL(1)\simeq SO(2)$, they form a borderline between the Levi and non-Levi families, whose bulk cases have been processed
in \cite{M1,M2,M3}.

A solution of the classical Yang-Baxter equation makes $G$ a Poisson group with the Drinfeld-Sklyanin Poisson structure on it.
It also gives rise to another Poisson bracket on $G$ making it a Poisson manifold over $G$ with respect to the conjugacy transformation. This Poisson structure restricts to any conjugacy class of $G$. We construct a quantization of
its polynomial algebra along that structure, which is equivariant under the action of the quantized universal enveloping algebra
$U_q(\g)$. In the present paper we deal with the standard or Drinfeld-Jimbo classical $r$ matrix and the standard quantum group $U_q(\g)$. The constructed quantization can be automatically generalized for all other
factorizable $r$-matrices on $G$. For details, the reader is referred to  \cite{M1}.

Observe that semisimple conjugacy classes in $SO(N)$ can be categorized by their sets of eigenvalues: whether they include
both $\pm 1$ or not. The stabilizer subgroup of the second
type is Levi, and such a class is isomorphic to an adjoint orbit in $\s\o(N)$ as an affine variety.
Their quantization has been constructed in \cite{M2}.
The stabilizer of the first type contains a Cartesian factor
$SO(2m)\times SO(P)$, where $2m$ and $P$ are the multiplicities of the eigenvalues $-1$ and $+1$, respectively. If $m\geqslant 2$ (one should also assume $P\geqslant 4$ for even $N$),
the subgroup $L$ is not Levi. Such classes have been quantized in \cite{M1}. The remaining classes corresponding to $m=1$ form a special family, which was not covered before.



The quantization method  of the borderline Levi classes is similar to that used in \cite{M1} and \cite{M2}:
a realization of its quantized polynomial algebra in a $U_q(\g)$-module of highest weight.
In the case of interest, it is a parabolic Verma module of special weight. Due to this constrain, it
 is not a deformation of a Verma module over $U(\g)$. The boundary classes were not covered
in \cite{M2} because the analysis was based on the properties of the Shapovalov form derived by deformation
arguments from its classical counterpart. The specialization of the highest weight in our present approach
requires a special study of the module $\C^N\tp M_\la$ carried out in this paper.

Consider the borderline class $O$ passing through  the diagonal matrix $o$ with entries
$$\underbrace{\mu_1,\ldots, \mu_1}_{n_1},\ldots, \underbrace{\mu_\ell,\ldots, \mu_\ell}_{n_\ell}
,-1,
\underbrace{1,\ldots, 1}_{P},-1,
\underbrace{\mu_\ell^{-1},\ldots, \mu_\ell^{-1}}_{n_\ell},\ldots, \underbrace{\mu_1^{-1},\ldots, \mu_1^{-1}}_{n_1},
$$
where $P=2p$ if $N=2n$ and $P=2p+1$ if $N=2n+1$. The complex numbers $\{\mu_i\}_{i=1}^\ell$ and $\mu_{\ell+1}=-1$, $\mu_{\ell+2}=1$
satisfy the conditions $\mu_i\not =\mu_j^{\pm 1}$ for $i<j\leqslant \ell$ and $\mu_i^2\not= 1$ for  $1\leqslant  i\leqslant \ell$.
The centralizer of the point $o\in G$ is the subgroup
\be
L=GL(n_1)\times \ldots \times GL(n_\ell)\times SO(2)\times SO(P),
\label{gr_K}
\ee
whose Lie algebra $\l$ is a Levi subalgebra in $\g$,
$$
\l=\g\l(n_1)\oplus\cdots\g\l(n_\ell)\oplus \s\o(2)\oplus \s\o(P).
$$
The subgroup $L$ is determined by  an integer valued vector $\nb=(n_i)_{i=1}^{\ell+2}$ subject to $\sum_{i=1}^{\ell+2}n_i=n$.
We reserve the integer $l$ for $\sum_{i=1}^\ell n_i$, so that $l+1+p=n$. Here $n_{\ell+1}=1$ and $n_{\ell+2}=p$.
Let $\Mc_L$ denote the moduli space of conjugacy classes with
the fixed isotropy subgroup (\ref{gr_K}), regarded as Poisson spaces as fixed in \cite{M1}. We introduce
the subspace $\Mc'_L$ of classes with $\mu_{\ell+1}=-1$.
The sets of all  $\ell+2$-tuples $\mub$  as specified above
parameterize $\Mc_L$ and $\Mc_L'$ although not uniquely. We denote these sets by $\hat \Mc_L$ and, respectively, $\hat \Mc'_L$.

As a variety, the class $O$ associated with $\mub$ and $\nb$ is determined  by the set of equations
\be
(A-\mu_1)\ldots (A-\mu_\ell)
(A+1)(A-1)(A-\mu_\ell^{-1})\ldots (A-\mu_1^{-1})=0,
\label{min_pol_cl}
\\
\Tr(A^k)=\sum_{i=1}^\ell n_i(\mu_i^{k}+\mu_i^{-k})+2(-1)^k+P, \quad k=1,\ldots, N,
\label{tr_cl}
\ee
where the matrix multiplication in the first line is understood. This system is polynomial
in the matrix entries $A_{ij}$ and defines an ideal of $\C[\End(\C^N)]$ vanishing on $O$.
\begin{thm}
The system of polynomial relations (\ref{min_pol_cl}) and (\ref{tr_cl}) generates the defining ideal of the
class $O$ in $\C[SO(N)]$.
\label{prop_clas_so}
\end{thm}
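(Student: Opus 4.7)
Write $I$ for the ideal generated by (\ref{min_pol_cl}) and (\ref{tr_cl}) and $V(I)\subseteq SO(N)$ for its zero scheme. The inclusion $I\subseteq I(O)$ is immediate: each relation holds at $o$, and $I$ is $G$-stable because the traces are central in $\C[\End(\C^N)]$ while the entries of the matrix-valued polynomial on the left of (\ref{min_pol_cl}) span a $G$-submodule under the conjugation action. It therefore suffices to prove $V(I)=O$ as schemes, which I would do in two steps: first establishing set-theoretic equality, then verifying smoothness of $V(I)$ at the single point $o$.

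For the set-theoretic step, the hypotheses $\mu_i\neq\mu_j^{\pm 1}$ and $\mu_i^2\neq 1$ make the $2\ell+2$ roots of the polynomial $P$ in (\ref{min_pol_cl}) pairwise distinct, so any $A\in V(I)\cap SO(N)$ is diagonalisable with spectrum in $\{\mu_1^{\pm 1},\dots,\mu_\ell^{\pm 1},\pm 1\}$. The $N$ trace identities (\ref{tr_cl}) then fix the multiplicity of each eigenvalue by a Vandermonde/Newton argument, so $A$ is $GL(N)$-conjugate to $o$. Since $o$ has $+1$ as an eigenvalue with multiplicity $P\geqslant 1$, its $O(N)$-centraliser meets $O(N)\setminus SO(N)$ through the $O(P)$-block on the $+1$-eigenspace, and the $GL(N)$-conjugacy refines to an $SO(N)$-conjugacy; hence $A\in O$.

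For reducedness it suffices, by $G$-equivariance and the fact that $V(I)_{\mathrm{red}}=O$ is a single smooth orbit, to check smoothness of $V(I)$ at $o$. Decompose $\End(\C^N)=\bigoplus_{\nu,\nu'}E_\nu\End(\C^N)E_{\nu'}$ by the spectral projectors $E_\nu$ of $o$. The divided-difference identity gives $dP|_o(Y_{\nu\nu'})=P[\nu,\nu']\,Y_{\nu\nu'}$, where $P[\nu,\nu']=(P(\nu)-P(\nu'))/(\nu-\nu')$ for $\nu\neq\nu'$ and $P[\nu,\nu]=P'(\nu)$. Simplicity of the roots of $P$ implies $P[\nu,\nu']=0$ for distinct eigenvalues of $o$ and $P'(\nu)\neq 0$ on the diagonal, so $\ker dP|_o\cap T_oSO(N)$ is exactly the sum of off-diagonal blocks in $\g$ with respect to the $o$-eigenspace decomposition, of dimension $\dim\g-\dim\l=\dim O$. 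Because $O\subseteq V(I)$ yields $T_oO\subseteq T_oV(I)$, and automatically $T_oV(I)\subseteq\ker dP|_o\cap T_oSO(N)$, these two inclusions squeeze $T_oV(I)$ to equal $T_oO$. Thus $V(I)$ is smooth at $o$, hence reduced throughout.

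The main obstacle is the tangent-space computation in the previous paragraph. In the non-borderline setting of \cite{M2} one could perturb $\mub$ away from $\pm 1$ and reduce to the already-known generic case, but in the borderline regime the eigenvalue $-1$ of $o$ is rigid with multiplicity exactly $2$, so no such deformation is available and the divided-difference argument at $o$ must be carried out directly. What makes it succeed is that the standing hypotheses $\mu_i\neq\mu_j^{\pm 1}$ and $\mu_i^2\neq 1$ force the roots of $P$ to remain simple even at the borderline specialisation, keeping the identification $\ker dP|_o\cap T_oSO(N)=T_oO$ intact.
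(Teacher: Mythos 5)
Your argument is correct and is essentially the strategy the paper relies on: the paper gives no details here, delegating to \cite{M3}, Theorem 2.3, whose proof follows the same pattern of (i) cutting out the class set-theoretically via the simple-root minimal polynomial plus the trace/Vandermonde argument and (ii) establishing reducedness from smoothness at the base point $o$ together with $G$-equivariance. Your divided-difference computation of $\ker dP|_o\cap T_oSO(N)$ and the squeeze against $T_oO$ is a clean way to carry out step (ii); the only step left tacit is the standard fact that $GL(N)$-conjugate semisimple elements of $O(N)$ are already $O(N)$-conjugate, which your centraliser remark then upgrades to $SO(N)$-conjugacy.
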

\begin{proof}
The proof is similar to \cite{M3}, Theorem 2.3.
\end{proof}
\noindent
Our goal is a generalization of this statement for the quantized polynomial algebra of  $O$.
\section{Parabolic Verma module $M_\la$}
We adopt certain  conventions concerning  representations of quantum groups, which are similar to \cite{M1}.
Unless otherwise stated, the quantum group $U_q(\g)$ and its modules are considered over the complex field,
 upon specialization of $q$ to not a root of unit. Extension of the ring of scalars via  $q=e^\hbar$
determines the embedding  $U_q(\g)\subset  U_\hbar(\g)$, where the latter is considered over
the ring $\C[\![\hbar ]\!]$ of formal power series in $\hbar$.
We assume that $U_\hbar(\g)$-modules are free over $\C[\![\hbar]\!]$ and their rank will
be referred to as  dimension.
Finite dimensional $U_\hbar(\g)$-modules are deformations of their classical counterparts, and we drop the reference
to $\hbar$ to simplify notation. For instance, the natural
$N$-dimensional representation of $U_\hbar(\g)$  will be denoted simply by $\C^N$.

Let $U_\hbar(\h)$ be the Cartan subalgebra in $U_\hbar(\g)$. We shall deal with  $U_\hbar(\h)$-diagonalizable, i.e. weight modules. If $V$ is an $\h$-invariant subspace, we mean by $[V]_\al$
the subspace of weight $\al\in \h^*$.
We stick to  the additive parametrization of weights facilitated by the embedding $U_q(\h)\subset U_\hbar(\h)$.
Under this convention, weights belong to  $\frac{1}{\hbar} \h^*[\![\hbar]\!]$ and are well defined on
$t^{\pm 1}_{\al_i}\in q^{\h}$. It is
sufficient for our needs to confine them to the subspace $ \hbar^{-1}\h^*\op \h^* \subset \hbar^{-1}\h^*[\![\hbar]\!]$.

We denote by $\c_\l\subset \h$ the center of $\l$ and realize its dual $\c_\l^*$ as a subspace in $\h^*$ thanks to
the canonical inner product.
Let $\p^+=\l+\g_+\subset \g$ denote the parabolic subalgebra, where $\g_\pm$ are the subalgebras generated by the positive and negative Chevalley  generators.
An element $\la\in \mathfrak{C}^*_{\l}= \hbar^{-1} \c_\l^*\op  \c_\l^*$ defines
a one-dimensional representation of $U_q(\l)$ denoted by  $\C_\la$. Its restriction to $U_q(\h)$ acts by the
assignment $q^{\pm h_\al}\mapsto q^{\pm (\al,\la)}$, $\al \in \Pi^+$. Since $q=e^\hbar$, the pole in $\la$ is compensated, and the representation is correctly defined.
It extends to $U_q(\p^+)$ by setting it zero on $\g_+\subset \p_\l^+$. Denote by $M_\la$ the parabolic Verma module $U_q(\g)\tp_{U_q(\p^+)}\C_\la$, \cite{Ja}. Regarded as a $U_q(\g_-)$-module by restriction from $U_q(\g)$, $M_\la$ is isomorphic to the quotient $U_q(\g_-)/U_q(\g_-)\l_-$, which we denote by $U_\l^-$.

The vector space $\C^N$ is regarded as a $U_q(\g)$-module supporting its natural representation. Of key importance for us is the
structure of the tensor product $\C^N\tp M_\la$. The element $\Ru_{12}\Ru$ expressed through the universal R-matrix $\Ru\in U_\hbar(\g)\tp U_\hbar(\g)$ operates on $\C^N\tp M_\la$ as an invariant matrix
$\Q\in \End(\C^N)\tp U_q(\g)$, which commutes with $\Delta(x)$ for all $x\in U_q(\g)$. The normal form of 
$\Q$ is determined by the submodule structure of $\C^N\tp M_\la$, the study of which takes the majority of this paper.
The eigenvalues of $\Q$ are found in \cite{M2}. It is also known that $\Q$ is semisimple for generic $\la\in \mathfrak{C}^*_{\l}$.
Then we are going to check that  $\Q$ remains semisimple  for a certain set of  $\la$ of our interest.

Let $\{\ve_i\}_{i=1}^N$  be the weights of the natural $U_q(\g)$-module $\C^N$. Then $\{\ve_i\}_{i=1}^n$,
$n=[\frac{N}{2}]$ (the integer part of $\frac{N}{2}$), form an orthogonal basis in $\h^*$, and $\ve_i=-\ve_{N+1-i}$.
The positive roots are expressed through $\{\ve_i\}_{i=1}^n$ in the standard way as fixed in \cite{M1}.
Denote by $w_i\in \C^N$ the standard  basis elements of weight $\ve_i$, $i=1,\ldots, N$.
The natural $U_q(\g)$-module splits into irreducible $U_q(\l)$-modules,
\be
\C^N=(\C^{n_1}\oplus\cdots \oplus\C^{n_{\ell}})\oplus\C\oplus \C^P \oplus\C\oplus (\C^{n_{\ell}} \oplus\cdots \oplus \C^{n_1}),
\label{l-decomp}
\ee
which decomposition is compatible with the basis $\{w_i\}_{i=1}^N=\cup_{i=1}^{2\ell+3}\{w_{k}\}_{k=m_i}^{m_i-1}$ counting from the left.
Here $m_i=n_1+\cdots+n_{i-1}+1$ for $i=1,\ldots, \ell+2$, and
$m_{2\ell+4-i}=N+1-\sum_{k=1}^in_k$, $i=1,\ldots, \ell$. Note that  $w_{m_i}$ is the highest weight vector of the corresponding irreducible $\l$-submodule in $\C^N$.

For $\la\in \mathfrak{C}^*_{\l}$, the operator  $\Q\in \End(\C^{N}\tp  M_\la)$ satisfies the equation $\prod_{i=1}^{2\ell+3}(\Q-x_i)=0$ with the roots
\be
\begin{array}{ccc}
x_i&=&q^{2(\la,\ve_{m_i})-2(m_i-1)},\> i=1,\ldots,\ell+2,
\\
x_{2\ell+4-i}&=&q^{-2(\la,\ve_{m_i})-2N+2(m_i+n_i)},
\>i=1,\ldots,\ell+1,
\end{array}
\label{eigenvalues}
\ee
see \cite{M2}, Theorem 4.2.
The root $x_i$ corresponds to a submodule $M_i\subset \C^N\tp M_\la$, where $\Q$
acts as multiplication by $x_i$. For generic $\la\in \mathfrak{C}^*_{\l}$ and $q$,
the roots $x_i$ are pairwise distinct, and $\C^N\tp M_\la=\oplus_{i=1}^{2\ell+3}M_i$.

In this paper, we are interested in special $\la$ making $x_{\ell+1}=q^{2(\la,\ve_{l+1})-2l}$  equal to $x_{\ell+3}=q^{-2(\la,\ve_{l+1})-2l-2P}$.
In particular, this condition is satisfied if
\be
q^{2(\la, \ve_{l+1})}=-q^{-P}.
\label{-1eig}
\ee
Let $\mathfrak{C}^*_{\l,'}$ be the subset  of all weights $\la\in \mathfrak{C}^*_{\l}$ subject to (\ref{-1eig}).
We prove that, for generic $\la\in \mathfrak{C}^*_{\l,'}$ and generic $q$ including $q\to 1$, the direct sum decomposition
of $\C^N\tp M_\la$ still holds,
and the operator $\Q$ is  semisimple. To this end, we study the submodules $M_{\ell+1}$ and $M_{\ell+3}$ and show that
 their sum is direct for all $\la$ satisfying (\ref{-1eig}). Our analysis is based on
calculation of singular vectors generating $M_{\ell+1}$ and $M_{\ell+3}$.

As in \cite{M2}, we introduce a subspace of weights   that we use for the parametrization of
$\Mc_{L}'$, the moduli space  of borderline conjugacy classes with fixed $L$.
Put $\mu^0_k=e^{2(\la,\ve_{m_k})}$, for $k=1,\ldots, \ell+2$. The subset $\c_{\l,'}^*\subset \c_{\l}^*$
is specified by the condition $\mu^0_{\ell+1}=-1$.
Let $\c_{\l,reg}^*$ denote the set of all weights $\la\in \c_\l^*$
such that $\mub^0\in \hat \Mc_L$ and similarly define ${\c}_{\l,reg'}^*\subset {\c}_{\l,'}^*$ by the requirement  $\mub^0\in \hat \Mc_L'$.
Finally, we introduce $\mathfrak{C}^*_{\l,reg'}=\mathfrak{C}^*_{\l,'}\cap (\hbar^{-1}{\c}_{\l,reg'}^*\oplus {\c}_{\l}^*) $.
The subset  $\mathfrak{C}^*_{\l,reg'}$ is dense in $\mathfrak{C}^*_{\l,'}$.

rluxemburg21@mail.ru

\section{On singular vectors in $\C^N\tp M_\la$}
In this section, $\l$ is the Levi subalgebra $\h+\s\o(P)$, which can be otherwise put as $\ell=l$. The parabolic Verma module $M_\la$ is relative
to this subalgebra. In other words, $\la\in \mathfrak{C}^*_{\l}$ if and only if
$(\la, \ve_i)=0$, $i=l+2,\ldots, n$.

Given weight $\la \in \frac{1}{\hbar}\h^*\oplus \h^*$ we denote $\la_i=(\la,\ve_i)$, for all $i=1,\ldots,N$.
The natural representation of $U_q(\g)$ on $\C^N$ is determined by the action
$f_{\ve_j-\ve_k} w_i=(-1)^{\eps_i}\dt_{ij}w_k$,  $e_{\ve_j-\ve_k} w_i=(-1)^{\eps_j}\dt_{ki}w_j$,
for $\ve_j-\ve_k \in \Pi^+$,  where  $\eps_i=0$ if $i\leqslant \frac{N}{2}$ and
$\eps_i=1$ otherwise. Note that Chevalley generators are normalized so that their representation matrices
are independent of $q$.

For $\g=\s\o(2n+1)$, the natural representation is determined, up to scalar multipliers, by the graph
\begin{center}
\begin{picture}(390,45)
\put(0,0){$w_{2n+1}$}
\put(10,15){\circle{3}}
\put(45,15){\vector(-1,0){30}}
\put(50,15){\circle{3}}
\put(100,15){$\ldots$}
\put(85,15){\vector(-1,0){30}}
\put(45,0){$w_{2n}$}
\put(160,15){\circle{3}}
\put(155,15){\vector(-1,0){30}}
\put(150,0){$w_{n+2}$}
\put(195,15){\vector(-1,0){30}}
\put(200,15){\circle{3}}
\put(190,0){$w_{n+1}$}
\put(235,15){\vector(-1,0){30}}
\put(240,15){\circle{3}}
\put(230,0){$w_{n}$}
\put(275,15){\vector(-1,0){30}}
\put(345,15){\vector(-1,0){30}}
\put(350,15){\circle{3}}
\put(345,0){$w_{2}$}
\put(290,15){$\ldots$}
\put(385,15){\vector(-1,0){30}}
\put(390,15){\circle{3}}
\put(385,0){$w_{1}$}
\put(25,20){$f_{\al_1}$}
\put(65,20){$f_{\al_2}$}
\put(135,20){$f_{\al_{n-1}}$}
\put(175,20){$f_{\al_{n}}$}
\put(215,20){$f_{\al_{n}}$}
\put(255,20){$f_{\al_{n-1}}$}
\put(325,20){$f_{\al_{2}}$}
\put(365,20){$f_{\al_{1}}$}

\end{picture}
\end{center}
The graph for $\g=\s\o(2n)$ is
\begin{center}
\begin{picture}(440,60)
\put(0,0){$w_{2n}$}
\put(10,15){\circle{3}}
\put(50,15){\circle{3}}
\put(45,15){\vector(-1,0){30}}
\put(85,15){\vector(-1,0){30}}
\put(100,15){$\ldots$}
\put(155,15){\vector(-1,0){30}}
\put(45,0){$w_{2n-1}$}
\put(160,15){\circle{3}}
\put(150,0){$w_{n+2}$}
\put(195,15){\vector(-1,0){30}}
\put(200,15){\circle{3}}
\put(190,0){$w_{n+1}$}
\qbezier(163,22)(200,48)(237,22) \put(162,21.5){\vector(-2,-1){2}}
\qbezier(203,22)(240,48)(277,22) \put(202,21.5){\vector(-2,-1){2}}
\put(240,15){\circle{3}}
\put(230,0){$w_{n}$}
\put(275,15){\vector(-1,0){30}}
\put(280,15){\circle{3}}
\put(270,0){$w_{n-1}$}
\put(330,15){$\ldots$}
\put(315,15){\vector(-1,0){30}}
\put(385,15){\vector(-1,0){30}}
\put(390,15){\circle{3}}
\put(385,0){$w_{2}$}
\put(425,15){\vector(-1,0){30}}
\put(430,15){\circle{3}}
\put(425,0){$w_{1}$}
\put(25,20){$f_{\al_1}$}
\put(65,20){$f_{\al_2}$}
\put(135,20){$f_{\al_{n-2}}$}
\put(172,20){$f_{\al_{n-1}}$}
\put(195,38){$f_{\al_{n}}$}
\put(252,20){$f_{\al_{n-1}}$}
\put(290,20){$f_{\al_{n-2}}$}
\put(235,38){$f_{\al_{n}}$}
\put(365,20){$f_{\al_{2}}$}
\put(405,20){$f_{\al_{1}}$}
\end{picture}
\end{center}
Reversing the arrows one gets the graphs for $e_\al$, $\al\in \Pi^+$.

Similarly, one can consider dual natural representation of $U_q(\g)$ on $\C^N$. In the dual basis $\{v^i\}_{i=1}^N$, the graphs will be similar,
with all arrows reversed.

Suppose that there is path from the node $w_i$ to $w_j$ on the representation graph. Then there is a Chevalley monomial $\psi\in U_q(\g_-)$ such that
$w_j$ is equal to $\psi w_i$, up to an invertible scalar multiplier.
Such $\psi$ is unique, which is obvious for odd $N$ and still true for even $N$, since
$f_{\al_n}f_{\al_{n-1}}=f_{\al_{n-1}}f_{\al_n}$. We denote this monomial $\psi_{ji}$ and
write $i\prec j$. This makes the integer interval $[1,N]$ a poset with the Hasse diagram above.

In what follows, we also use the monomial  $\psi^{ij}$ obtained from  $\psi_{ji}$ by reversing the order of Chevalley generators, so
that $v^i=\psi^{ij}v^j$. We also put $\psi^{ii}=1$ for all $i$.
It is clear that $\psi^{ij}=\psi^{im}\psi^{mj}$ for any $m$ such that $i\preceq m\preceq j$.
\begin{definition}
We call $\psi^{ij}$, $i\preceq j$, the principal  monomial of weight $\ve_j-\ve_i$.
\end{definition}
\noindent 
Remark that all Chevalley monomials of weight $\ve_j-\ve_i$ are obtained from $\psi^{ij}$ by permutation of factors.

Recall that a non-zero weight vector $v$ in a $U_q(\g)$-module is called singular if it generates  the trivial $U_q(\g_+)$-submodule, i.e. $e_\al v=0$, for all $\al \in \Pi^+$. Since the weights of $e_\al v$ are pairwise distinct, this is equivalent
to the equation $E v=0$, where $E=\sum_{m=1}^n e_{\al_m}$. We will also work with
the operator $E'=\sum_{m=2}^n e_{\al_m}$, in view of Corollary \ref{generating_coeff} below.

\begin{lemma}
\label{dual-contragredient}
Let $W$ be a finite dimensional $U_q(\g)$-module and $W^*$ its right dual module. Let $Y$ be a $U_q(\g)$-module. Singular vectors in $W\tp Y$
are parameterized by homomorphisms  $W^*\to Y$ of $U_q(\g_+)$-modules.
\end{lemma}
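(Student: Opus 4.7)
Plan: The statement is essentially the tensor-Hom adjunction in the rigid monoidal category of $U_q(\g)$-modules, restricted to the subalgebra $U_q(\g_+)$. The only subtlety is that $U_q(\g_+)$ is not a sub-Hopf-algebra of $U_q(\g)$, since $\Delta(e_\al)=e_\al\tp 1+t_\al\tp e_\al$ and $S(e_\al)=-t_\al^{-1}e_\al$ both involve the Cartan generator $t_\al$. My approach is to exhibit the explicit linear bijection $W\tp Y\simeq\Hom_\C(W^*,Y)$ afforded by finite-dimensionality of $W$, and to verify directly that the singularity condition on one side corresponds to the $U_q(\g_+)$-intertwining condition on the other.

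Concretely, fix a weight basis $\{w_i\}$ of $W$ with weights $\nu_i$, and let $\{\xi^i\}\subset W^*$ be the dual basis. Writing $v\in W\tp Y$ uniquely as $v=\sum_i w_i\tp y_i$, assign to it the linear map $\phi_v\colon W^*\to Y$ with $\phi_v(\xi^i)=y_i$. This is the canonical bijection between $W\tp Y$ and $\Hom_\C(W^*,Y)$. Expanding $e_\al\cdot v$ and setting the coefficient of each $w_i$ to zero yields the singularity condition $e_\al y_i=-q^{-(\al,\nu_i)}\sum_j c^\al_{ij}y_j$, with $c^\al_{ij}$ the matrix coefficients of $e_\al$ in the basis $\{w_i\}$. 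On the other side, the right-dual action gives $e_\al\cdot\xi^i=-q^{-(\al,\nu_i)}\sum_k c^\al_{ik}\xi^k$, and the intertwining requirement $\phi_v(e_\al\xi^i)=e_\al\phi_v(\xi^i)$ reproduces exactly the same system.

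The main technical point is the Cartan cancellation in this last computation: the factor $t_\al$ in $\Delta(e_\al)$ and the factor $t_\al^{-1}$ in $S(e_\al)$ each contribute the same scalar $q^{-(\al,\nu_i)}$, so the weight bookkeeping forced by the non-Hopf nature of $U_q(\g_+)$ cancels out cleanly. Once this is verified for each simple positive root $\al$, the bijection $v\leftrightarrow\phi_v$ restricts to the desired parameterization of singular vectors in $W\tp Y$ by $U_q(\g_+)$-module homomorphisms $W^*\to Y$.
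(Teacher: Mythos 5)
Your argument is correct and is essentially the paper's own proof: fix a weight basis of $W$, expand $e_\al u=0$ using $\Delta(e_\al)=e_\al\tp 1+t_\al\tp e_\al$, and recognize the resulting relations $e_\al y_i=-q^{-(\al,\nu_i)}\sum_j \pi(e_\al)_{ji}y_j$ as exactly the condition that $\xi^i\mapsto y_i$ intertwines the right-dual $U_q(\g_+)$-action. The only difference is that you spell out the canonical bijection $W\tp Y\simeq\Hom_\C(W^*,Y)$ and the antipode/Cartan cancellation explicitly, which the paper leaves implicit.
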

\begin{proof}
Choose a weight basis $\{w_i\}_{i=1}^d\subset W$,  where $d=\dim W$. Suppose that
$u\in W\tp Y$ is a singular vector,
$
u=\sum_{i=1}^d w_i\tp y_i,
$ for some
$y_i\in Y$. Let $\pi\colon U_q(\g)\to \End(W)$ denote the representation homomorphism, $\pi(u)w_i=\sum_{j=1}^N \pi(u)_{ij}w_j$.
We have, for $\al \in \Pi^+$,
\be
e_\al u=\sum_{i=1}^d \sum_{j=1}^d \pi(e_\al)_{ij}w_j\tp y_i+\sum_{i=1}^d q^{(\al,\ve_i)}w_i\tp e_\al y_i.
\label{y_i conat_fact}
\ee
So $e_\al  u=0$ is equivalent to
$
e_\al y_i=-q^{-(\al,\ve_i)}\sum_{j=1}^d  \pi(e_\al)_{ji}y_j.
$
The vector space $\Span\{y_i\}_{i=1}^d$ supports the right dual representation of $U_q(\g_+)$, provided
$y_i$ are linear independent. In general, it is a quotient of the right dual representation.
\end{proof}
Formula (\ref{y_i conat_fact}) can be more explicitly rewritten as
$$
y_j=(-1)^{\eps_i+1}q^{(\ve_i-\ve_j,\ve_i)}e_{\ve_i-\ve_j}y_i
$$
for all $i,j\in [1,N]$ such that $\ve_i-\ve_j\in \Pi^+$. In the following corollary, $M_\la$ is a parabolic Verma module
relative to arbitrary $\l$.

\begin{corollary}
\label{generating_coeff}
Singular vectors $\{u_i\}\in \C^N\tp M_\la$ are parameterized by weight elements $y\in M_\la$ satisfying
$e_{\al_1}^3y=0$ if $N=3$, $e_{\al_1}^2y=e_{\al_2}^2y=0$ if $N=4$ and $e_{\al_1}^2 y= E'y=0$ for $N>4$.
\end{corollary}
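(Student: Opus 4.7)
The plan is to apply Lemma~\ref{dual-contragredient} to $W = \C^N$, $Y = M_\la$: singular vectors in $\C^N \tp M_\la$ are in bijection with $U_q(\g_+)$-module homomorphisms $\phi \colon (\C^N)^* \to M_\la$. Since the $e_\al$-action raises weights, the lowest-weight vector $v^1 \in (\C^N)^*$ (of weight $-\ve_1$) is a cyclic $U_q(\g_+)$-generator: every $v^i$ is obtained from $v^1$ by successively applying simple $e_{\al_m}$'s along a path in the dual representation graph, i.e.\ the natural representation graph with arrows reversed. Consequently a $U_q(\g_+)$-map $\phi$ with $\phi(v^1) = y$ exists if and only if $y$ is annihilated by the left ideal $\Ann(v^1) \subseteq U_q(\g_+)$, and then the associated singular vector is $u = \sum_i w_i \tp \phi(v^i)$.

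For $N > 4$, weight inspection at $v^1$ shows $e_{\al_1}^2 v^1 = 0$ (because $-\ve_1 + 2\al_1$ is not a weight of $\C^N$) and $e_{\al_m} v^1 = 0$ for every $m \geqslant 2$ (because $\al_m$ does not involve $\ve_1$, so $-\ve_1 + \al_m$ is not a weight of $\C^N$). Moreover the vectors $e_{\al_m} y$ for distinct $m \geqslant 2$ lie in pairwise distinct weight subspaces of $M_\la$, so the single equation $E' y = 0$ is equivalent to the conjunction $e_{\al_m} y = 0$ for every $m \geqslant 2$. These give the stated necessary conditions. In the low-rank cases, $N = 3$ gives $\g \simeq \s\l(2)$ and $(\C^3)^*$ is the three-dimensional irreducible, whose lowest-weight vector has annihilator $(e_{\al_1}^3)$; for $N = 4$, $\g \simeq \s\l(2)\oplus\s\l(2)$ and $\al_2 = \ve_1+\ve_2$ does involve $\ve_1$, so $e_{\al_2} v^1 \neq 0$ (it is proportional to $v^3$) while $e_{\al_2}^2 v^1 = 0$ by weight, yielding the annihilator $(e_{\al_1}^2,\, e_{\al_2}^2)$.

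Sufficiency requires that the left ideal $J \subseteq U_q(\g_+)$ generated by these relations already exhausts $\Ann(v^1)$, equivalently that the canonical surjection $U_q(\g_+)/J \twoheadrightarrow (\C^N)^*$ induced by $\bar 1 \mapsto v^1$ is an isomorphism; I expect this to be the main technical obstacle. The plan is a PBW-type normal form computation: any Chevalley monomial acting on $\bar 1 \in U_q(\g_+)/J$ is reduced modulo $J$ by commuting each $e_{\al_m}$ with $m \geqslant 2$ to the right past the other factors, whereupon it annihilates $\bar 1$; the $q$-commutators produced along the way yield root vectors $e_\beta$ for non-simple $\beta$, which are processed recursively using the quantum Serre relations. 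What survives is an $N$-element set of classes corresponding to the paths from $v^1$ to the basis vectors $v^i$ in the reversed-arrow graph, establishing the required bound $\dim U_q(\g_+)/J \leqslant N$ and thereby completing the proof.
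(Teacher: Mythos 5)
Your argument is essentially the paper's own proof: both apply Lemma~\ref{dual-contragredient} with $W=\C^N$, observe that the co-natural module is cyclic over $U_q(\g_+)$ on the lowest-weight vector $v^1$ of weight $-\ve_1$, and reduce the claim to identifying $\Ann(v^1)$ with the left ideal generated by the listed elements. The sufficiency step you flag as the main obstacle is precisely what the paper asserts, and it is the standard presentation of an integrable lowest-weight module — $\Ann(v^1)$ is generated by $e_{\al_i}^{\langle\ve_1,\al_i^\vee\rangle+1}$, $i=1,\dots,n$, which for the weight $\ve_1$ yields exactly the stated generators — so your PBW normal-form verification, while it would work, can simply be replaced by that standard fact.
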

\begin{proof}
The weight $\ve_1$ is integral dominant.
The dual natural representation of $U_q(\g)$ is generated by the vector of lowest weight $-\ve_1$.
When restricted to $U_q(\g_+)$,
it is isomorphic to a quotient of the left regular $U_q(\g_+)$-module. It is the quotient by the left ideal in $U_q(\g_+)$
 generated
by $e_{\al_1}^3$ if $N=3$, by $e_{\al_1}^2,e_{\al_2}^2$ if $N=4$, and  by  $e_{\al_1}^2, e_{\al_i}$, $i=2,\ldots,n$ if $N>4$. Therefore, all homomorphisms
from the co-natural module to $M_\la$ are generated by the assignment $U_q(\g_+)\ni 1\to y\in M_\la$,
where $y$ satisfies the hypothesis.
\end{proof}
Singular vectors generate $U_q(\l)$-submodules of highest weight.
It is known that, for generic $\la$,  singular vectors in $\C^N\tp M_\la$ are parameterized by the highest weights $\nu$ of the
irreducible $U_q(\l)$-submodules in $\C^N$ and carry the weights $\la+\nu$.
We denote by $u_j$ the singular vector of weight $\la+\ve_j$, $j=1,\ldots, N$, which is defined up to a non-zero scalar factor.
We can write
$$
u_j=\sum_{j=1}^{N-l}w_{j}\tp y_{j,i}, \quad j=1,\ldots, N,
$$
where $y_j\in M_\la$ is an element of weight $\la+\ve_j-\ve_i$, $i\leqslant j$. For each $j$ the linear span $\{y_{j,i}\}_{i=1}^j$
supports a quotient of the co-natural representation of $U_q(\g_+)$, which is cyclicly generated by $\{y_{j,1}\}$

Singular vectors $u_{i}$, $i=1,\ldots, n-1$, are related to the subalgebra $\g\l(n)\subset \g$ and  can be found in
\cite{AM}. Singular vector $u_{n+1}$ in the case of $\g=\s\o(2n)$ is related to another copy of $\g\l(n)$ with
$\al_{n-1}$ replaced by $\al_{n}$. Singular vector $u_{n+1}$ for $\g=\s\o(2n+1)$ can be constructed as follows.
Define the "dynamical root vectors" $f_{\ve_k}$ by setting $f_{\ve_n}= f_{\al_n}$ and
$$
 \quad
f_{\ve_{k-1}}=f_{\al_{k-1}}f_{\ve_k}[h_{\ve_k}+n-k+1]_q- f_{\ve_k}f_{\al_{k-1}}[h_{\ve_k}+n-k]_q
$$
for all $k=n-1,\ldots,1$. It is also convenient to put  $f_{0}=1$ taking into account $\ve_{n+1}=0$. Let $M_\la$ be a Verma module and $v_\la$ its canonical generator.
One can check  the identity
$$
e_{\al_k}f_{\ve_i}v_{\la}=\dt_{ki}[\la_{i}+n-i]f_{\ve_{i+1}}v_{\la},
$$
by induction on $i$.
Setting $y_{n+1,1}=f_{\ve_{1}}v_\la$, one obtains
$
y_{n+1,i}=(-q)^{i-1}\prod_{k=1}^{i-1}[\la_{i}+n-k]f_{\ve_{i}}v_\la,
$
 $i=1,\ldots n+1$.

We need not know all singular vectors for the purpose of this study.
We are especially interested in  $u_{N-l}$ carrying the weight $\la-\ve_{l+1}$. It  is expanded over the basis
$\{w_i\}_{i=1}^N\subset \C^N$ as
$
u_{N-l}=\sum_{i=1}^{N-l}w_{i}\tp y_i
$
with coefficients $y_i=y_{N-l,i}$ of weight $\la-\ve_i-\ve_{l+1}$, $i=1,\ldots, l+1$. They are generated by
$y_1$ via the co-natural action of $U_q(\g_+)$. We call $y_1$ the generating coefficient.
Our next goal is to evaluate $y_1$.

Consider the graph corresponding to the co-natural representation of $U_q(\g_+)$ for $N>3$.
\begin{center}
\begin{picture}(310,35)
\put(0,0){$y_{N-l}$}
\put(10,15){\circle{3}}
\put(45,15){\vector(-1,0){30}}
\put(60,15){$\ldots$}
\put(115,15){\vector(-1,0){30}}
\put(120,15){\circle{3}}
\put(110,0){$y_{l+1}$}
\put(155,15){\vector(-1,0){30}}
\put(160,15){\circle{3}}
\put(155,0){$y_{l}$}
\put(195,15){\vector(-1,0){30}}
\put(265,15){\vector(-1,0){30}}
\put(270,15){\circle{3}}
\put(265,0){$y_{2}$}
\put(210,15){$\ldots$}
\put(305,15){\vector(-1,0){30}}
\put(310,15){\circle{3}}
\put(305,0){$y_{1}$}
\put(25,20){$e_{\al_{l+1}}$}
\put(95,20){$e_{\al_{l+1}}$}
\put(135,20){$e_{\al_{l}}$}
\put(175,20){$e_{\al_{l-1}}$}
\put(245,20){$e_{\al_{2}}$}
\put(285,20){$e_{\al_{1}}$}

\end{picture}
\end{center}
One can readily write down $y_{i}$ for $l+2\leqslant i\leqslant N-l$, up to a scalar factor.
Indeed, the corresponding weight spaces in $M_\la$ have dimension $1$. Suppose that $\psi^{i,N-l}=f_\al\psi^{j,N-l}$
for  $\al=\ve_i-\ve_j \in \Pi^+$ (for odd $N$,  $j$ is  always $i-1$, while for even $N$  $j$ may be also $i-2$ for $i=n+1, n+2$).
Then $e_\al \psi^{i,N-l}v_\la\sim\psi^{i+1,N-l}v_\la$ and $y_{i}\sim \psi^{i,N-l} v_\la$:
$$y_{N-l}\sim v_\la, \quad y_{N-l-1}\sim f_{l+1}v_{\la}, \quad y_{N-l-2}\sim f_{l+2}f_{l+1}v_{\la},\quad\ldots
$$
In particular,
$y_{l+1}\sim f_{l+1}\stackrel{<}{\ldots} f_{n-1}f_nf_nf_{n-1}\stackrel{>}{\ldots} f_{l+1}v_\la$
for  odd $N$ and a similar expression with $f_{n-1}f_n$ in place of $f_nf_n$ for even $N$.

 The problem essentially boils down to finding $y_i$ with $i\leqslant l+1$. These coefficients feature
the following chain property. Let $\g_{i}'\subset \g$ denote the subalgebra with simple roots $\{\al_j\}_{j=i}^n$ and
let $M'_{i,\la}\subset M_\la$ be the $U_q(\g_{i}')$-submodule generated by $v_\la$. If  $y_i\in M'_{i,\la}$, then $y_i$ is the generating
coefficient for a $U_q(\g_{i}')$-singular vector in $\C^{N-2i+2}\tp M'_{i,\la}$, as follows from the representation graph. This observation enables  construction of $y_i$  by descending induction starting from $y_{l+1}\in M'_{l+1,\la}$, which is done in the next section.

\subsection{Symmetric classes}
\label{Sec_l=0}
In this section, we fix $l=0$ or equivalently  $n=1+p$. This assumption corresponds to
the symmetric conjugacy class of matrices with eigenvalues
$-1$ and $+1$ of multiplicities $2$ and $P$, respectively.
The singular vector of interest has weight $\la+\ve_{l+1}=\la-\ve_{1}$.

We introduce the following basis in $[U_\l^-]_{-2\ve_1}$. Observe  that $d^0_{P}=\dim [U_\l^-]_{-2\ve_1}$
is $p+1$ for odd $P$ and $p$ for even $P$ (recall that $P\equiv N \mod 2$ is the multiplicity of $+1$ in the spectrum of the conjugacy class).
Define monomials $\phi_m$, $m=1,\ldots, d^0_P$, by
\be
\phi_{m} &=&
\left\{
\begin{array}{ll}
\begin{array}{lll}
f_{\al_m} \overset{>}{\ldots} f_{\al_1} f_{\al_{m+1}}\overset{>}{\ldots} f_{\al_{p+1}}f_{\al_{p+1}}\overset{>}{\ldots} f_{\al_1},&&1\leqslant m\leqslant p+1\\
\end{array}& \mbox{for odd } N,\\
\left.
\begin{array}{ll}
f_{\al_m} \overset{>}{\ldots} f_{\al_1} f_{\al_{m+1}}\overset{<}{\ldots} f_{\al_{p-1}}f_{\al_p}f_{\al_{p+1}}f_{\al_{p-1}}\overset{>}{\ldots} f_{\al_1},&1\leqslant m\leqslant p-1\\
f_{\al_p}f_{\al_{p-1}} \overset{>}{\ldots} f_{\al_1} f_{\al_{p+1}}f_{\al_{p-1}}\overset{>}{\ldots} f_{\al_1},& m=p\\
f_{\al_{p+1}}f_{\al_{p-1}} \overset{>}{\ldots} f_{\al_1} f_{\al_p}f_{{\al_p-1}}\overset{>}{\ldots} f_{\al_1},& m=p+1
\end{array}\right\}&\mbox{ for even } N.
\end{array}
\right.
\nn
\ee
All $\phi_m$ have weight $-2\ve_1$.
Using the Serre relations, one can check for even $N$ that $\phi_{p+1} = f_{\al_p}f_{\al_{p-1}} \overset{>}{\ldots} f_{\al_1} f_{\al_{p+1}}f_{\al_{p-1}}\overset{>}{\ldots} f_{\al_1}=\phi_{p}$, so the number of independent $\phi_m$ is 
equal to $d^0_P=\dim [U^-_\l]_{-2\ve_1}$. Still
it is convenient to consider both $\phi_{p}$ and $\phi_{p+1}$.

The leftmost position in all $\phi_m$ is occupied by $f_{\al_m}$. We  define
vectors $\phi_m'$ of weight $-2\ve_1+\al_m$ obtained from $\phi_m$ by deleting this $f_{\al_m}$:
$$
\phi_{m}' = f_{\al_{m-1}} \overset{>}{\ldots} f_{\al_1} f_{\al_{m+1}}\overset{<}{\ldots} f_{\al_{p+1}}f_{\al_{p+1}}\overset{>}{\ldots} f_{\al_1}
\quad \mbox{for odd } N, \mbox{ and  }
$$
$$
\phi_{m}' =
\left\{
\begin{array}{ll}
f_{\al_{m-1}} \overset{>}{\ldots} f_{\al_1} f_{\al_{m+1}}\overset{<}{\ldots} f_{\al_{p-1}}f_{\al_p}f_{\al_{p+1}}f_{\al_{p-1}}\overset{>}{\ldots} f_{{\al_1}},& m\leqslant p-1\\
f_{\al_{p-1}} \overset{>}{\ldots} f_{\al_1} f_{\al_{p+1}}f_{\al_{p-1}}\overset{>}{\ldots} f_{\al_1},& m=p\\
f_{\al_{p-1}} \overset{>}{\ldots} f_{\al_1} f_{\al_p}f_{\al_{p-1}}\overset{>}{\ldots} f_{\al_1},& m=p+1
\end{array}
\right. ,
\quad \mbox{for even } N.
$$
Abusing notation, we will also identify $\phi_m$ and $\phi'_m$ with their images in the quotient $U_\l^-$. 
\begin{lemma}
\label{phi'}
The monomial $\phi_{m}'$ spans $[U_\l^-]_{-2\ve_1+\al_m}$ for each $m=1,\ldots, p+1$.
\end{lemma}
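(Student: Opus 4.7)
The plan is to reduce the statement to a PBW dimension count together with the already-established nonvanishing of $\phi_m$ in $U_\l^-$. Since $\phi_m = f_{\al_m}\phi_m'$ by construction, nonvanishing of $\phi_m$ immediately forces $\phi_m' \neq 0$; once the weight space $[U_\l^-]_{-2\ve_1+\al_m}$ has been shown to be one-dimensional, this will be enough.

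For the dimension computation I would use the PBW isomorphism of vector spaces $U_\l^- \simeq U_q(\n_-)$, where $\n_-$ is spanned by negative root vectors $f_\beta$ indexed by positive roots $\beta$ of $\g$ that are \emph{not} roots of $\l$. Since $\l = \h + \s\o(P)$, these missing roots are exactly the ones whose $\ve_1$-coefficient equals $+1$: namely $\ve_1\pm\ve_j$ for $j=2,\ldots,n$, plus $\ve_1$ itself when $N$ is odd. Each such generator contributes $-1$ to the $\ve_1$-component of the weight. Because $\al_m$ contains $\ve_1$ only for $m=1$, a PBW monomial of weight $-2\ve_1+\al_m$ uses exactly one factor if $m=1$ and exactly two factors if $m\geqslant 2$. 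A direct enumeration of the possible decompositions $a_1+a_2=\al_m$ with $a_i\in\{\pm\ve_j\}_{j\geqslant 2}\cup\{0\}_{N\,\text{odd}}$ (the non-$\ve_1$ parts of the generators) yields a unique unordered pair in every case: $\{\ve_m,-\ve_{m+1}\}$ for $\al_m=\ve_m-\ve_{m+1}$, $\{\ve_n,0\}$ for odd $N$ and $m=n$, and $\{\ve_{n-1},\ve_n\}$ for even $N$ and $m=n$. Fixing a PBW ordering picks out one monomial, so $\dim[U_\l^-]_{-2\ve_1+\al_m}=1$ uniformly in $m$.

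The main obstacle is doing this case check cleanly across odd and even $N$ and across the boundary indices $m\in\{p,p+1\}$, where the two "horn" roots $\al_{n-1}$ and $\al_n$ enter asymmetrically; a small but necessary sanity check is that no decomposition involving a higher non-simple-looking $\pm\ve_j$ sneaks in, which is immediate from the fact that the non-$\ve_1$ parts of the PBW generators lie in $\{\pm\ve_j\}\cup\{0\}$. To close the argument I invoke the spanning statement made just above the lemma: the family $\{\phi_m\}$ (with the identification $\phi_{p+1}=\phi_p$ in the even case) is a basis of $[U_\l^-]_{-2\ve_1}$, so each $\phi_m$ is nonzero in $U_\l^-$; consequently $\phi_m'\neq 0$, and being a nonzero element of a one-dimensional space it spans $[U_\l^-]_{-2\ve_1+\al_m}$.
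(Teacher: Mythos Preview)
Your PBW dimension count is correct and in fact sharper than the paper's ``one can check'' for $\dim[U_\l^-]_{-2\ve_1+\al_m}=1$. The gap is in the nonvanishing step. You deduce $\phi_m'\neq 0$ from $\phi_m\neq 0$, citing ``the spanning statement made just above the lemma.'' But that sentence is only an \emph{announcement}: the paper writes ``We introduce the following basis\ldots'' and observes that the number of distinct $\phi_m$ equals $d^0_P$, yet the actual proof that $\{\phi_m v_\la\}$ is a basis of $[M_\la]_{\la-2\ve_1}$ comes \emph{after} the lemma, in part (a) of the Corollary following Lemma~\ref{Einjectve}. That Corollary rests on Lemma~\ref{Einjectve}, and the injectivity of $\hat E$ asserted there uses that $\sum_m B_m\phi_m'v_\la=0$ forces $B_m=0$ for every $m$; since the $\phi_m'v_\la$ lie in distinct weight spaces, this implication needs each $\phi_m'v_\la\neq 0$ --- precisely the content of Lemma~\ref{phi'}. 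So your argument is circular within the paper's logical order.

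The paper avoids this by proving $\phi_m'v_\la\neq 0$ directly and independently of any statement about the $\phi_m$: it computes the Shapovalov norm $\langle\phi_m'v_\la,\phi_m'v_\la\rangle$ (equal to $[\la_1]_q$ for $m=1$ and $[\la_1]_q[\la_1-1]_q$ otherwise), observes this is nonzero for generic $\la$, and then uses the $\la$-independence of $\phi_m'$ together with $M_\la\simeq U_\l^-$. If you want to keep your approach, you would need an independent reason why $\phi_m'\neq 0$ in $U_\l^-$ --- for instance by exhibiting its PBW leading term explicitly and checking the coefficient, or by a classical-limit argument --- rather than appealing to the basis property of $\{\phi_m\}$.
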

\begin{proof}
One can check that $\dim[U_\l^-]_{-2\ve_1+\al_m}=1$, so to prove the statement, we must prove that $\phi'_m\not=0$.
The squared norm $\langle\phi'_m v_\la,\phi'_m v_\la\rangle$
with respect to the Shapovalov form on $M_\la$
is equal to $[\la_{1}]_{q}$ for $m=1$ and to $[\la_{1}]_{q}[\la_{1}-1]_{q}$ otherwise.
It is not zero if  $[\la_{1}]_{q}[\la_{1}-1]_{q}\not=0$. Due to the isomorphism $M_\la\simeq U_\l^-$,
 $\phi'_m\not =0$ as well as its projection in $U_\l^-$ for generic $\la$. But $\phi'_m$ is independent of $\la$, which
 completes the proof.
\end{proof}
Let $\Phi^{0}$ denotes the linear span of $\{\phi_m v_\la\}_{m=1}^{d^0_P}\subset M_\la$.
Denote by  $\hat E$ the composition $\C^{d^0_P}\to \Phi^{0}\to M_\la$ of linear maps,
$(A_m)\mapsto \sum_{m=1}^{d^0_P} A_m \phi_m =y\mapsto E y$.
For $N\geqslant 5$, the operator $\hat E$ acts on
$\C^{d^0_P}$ by $(A_m)_{m=1}^{d^0_P}\mapsto \sum_{m=1}^{p+1}B_m\phi_m'v_\la$,
where the scalar coefficients $B_m$ are given in Appendix A.

\begin{lemma}
\label{Einjectve}
Suppose that $N\geqslant 5$. Then the map $\hat E$ is injective  for generic $\la$.
\end{lemma}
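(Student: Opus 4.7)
The plan is to reduce injectivity of $\hat E$ to irreducibility of $M_\la$. Suppose $A = (A_m)_{m=1}^{d^0_P}\in\ker\hat E$ and set
$y = \sum_{m} A_m\,\phi_m v_\la \in [M_\la]_{\la - 2\ve_1}$.
By definition $\hat E(A) = Ey = \sum_{m=1}^n e_{\al_m} y = 0$. The summands $e_{\al_m} y$ occupy pairwise distinct weight spaces, with weights $\la - 2\ve_1 + \al_m$ distinct because the simple roots are linearly independent. Hence each $e_{\al_m} y$ vanishes separately, so $y$ is either zero or a $U_q(\g_+)$-singular vector of weight $\la - 2\ve_1$.

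For generic $\la\in\mathfrak{C}^*_\l$ the parabolic Verma module $M_\la$ is irreducible as a $U_q(\g)$-module, the reducibility locus being a proper subvariety cut out by vanishing of the relevant Shapovalov-type polynomials in $q^{\la_1}$. An irreducible highest-weight module has as singular vectors only the scalar multiples of the canonical generator, which carry weight $\la\neq\la - 2\ve_1$. Therefore $y = 0$. By the dimension count preceding Lemma~\ref{phi'}, $\{\phi_m\}_{m=1}^{d^0_P}$ is a basis of $[U^-_\l]_{-2\ve_1}$, and via the isomorphism $M_\la\simeq U^-_\l$ of $U_q(\g_-)$-modules the set $\{\phi_m v_\la\}$ is linearly independent in $M_\la$. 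Consequently all $A_m = 0$, and $\hat E$ is injective.

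If one prefers an argument that does not invoke generic irreducibility of parabolic Verma modules, the explicit formulas for $B_m$ in Appendix~A allow the same conclusion via a rank computation. By Lemma~\ref{phi'} each $\phi'_m v_\la$ is nonzero and occupies a one-dimensional weight space $[M_\la]_{\la - 2\ve_1 + \al_m}$, so these vectors are linearly independent; hence $\hat E(A) = 0$ is equivalent to $B_m(A) = 0$ for every $m$. Injectivity then reduces to showing that the coefficient matrix of the linear map $A\mapsto (B_m(A))$ has full column rank---a $(p+1)\times(p+1)$ determinant for odd $N$, and non-vanishing of a $p\times p$ minor of a $(p+1)\times p$ matrix for even $N$ (where the identification $\phi_p = \phi_{p+1}$ reduces the column count by one). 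I expect the rank verification itself to be the real obstacle, as the entries are intricate polynomials in $q^{\la_1}$ arising from moving Chevalley generators past $v_\la$; however, the $\la_1$-dependence is nontrivial, so the appropriate minor is a nonzero polynomial and vanishes only on a proper subvariety of $\mathfrak{C}^*_\l$.
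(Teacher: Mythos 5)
Your first route has a genuine gap at its final step. From $Ey=0$ and generic irreducibility of $M_\la$ you get $y=\sum_m A_m\phi_m v_\la=0$; to conclude $A=0$ you then invoke linear independence of $\{\phi_m v_\la\}$, citing the dimension count preceding Lemma~\ref{phi'}. But that count only says that the number of monomials $\phi_m$ equals $\dim[U^-_\l]_{-2\ve_1}$; it does not show that these particular monomials (which are not PBW monomials) are nonzero and linearly independent in the quotient $U^-_\l$. In the paper that independence is exactly part (a) of the corollary deduced \emph{from} Lemma~\ref{Einjectve}: injectivity of $\hat E$, whose first factor is $(A_m)\mapsto\sum_m A_m\phi_m v_\la$, already contains the independence statement, so any proof of the lemma must establish it rather than quote it. Your irreducibility argument therefore only reduces the lemma to the independence claim; it does not close it. (Generic irreducibility of the parabolic Verma module is also merely asserted, though that is a comparatively standard genericity fact and is not the real issue.)

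Your fallback sketch is essentially the paper's actual argument, but you stop exactly where the content lies. The paper shows that the subsystem $B_i=0$, $i=2,\ldots,p+1$, has a one-dimensional solution space spanned by $A_m=(-1)^{m-1}\bigl[\frac{P}{2}-m+1\bigr]_q$ (formula (\ref{A_level_1})), and that on this solution $B_1=A_1[\la_1]_q+A_2[\la_1-1]_q=[\la_1+\frac{P}{2}-1]_q$, which does not vanish for generic $\la$; that is the whole proof. Your genericity heuristic (``the entries are intricate polynomials in $q^{\la_1}$, so the appropriate minor is a nonzero polynomial'') is moreover off the mark: by Appendix A only the row $B_1$ depends on $\la_1$, while $B_2,\ldots,B_{p+1}$ are constant in $\la$, so full rank cannot be inferred from $\la$-dependence alone --- one must actually solve the constant subsystem and evaluate $B_1$ on its unique solution, which is the computation you leave undone.
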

\begin{proof}
Define
\be
A_{m}&=&\left( -1 \right)^{m-1} \left[ \frac{P}{2}-m +1\right]_{q}, \quad m=1,\ldots,d^0_P.
\label{A_level_1}
\ee
For $N\geqslant 5$, one can check that (\ref{A_level_1}) is a unique solution of the system of equations $B_i=0$, $i=2,\ldots,p+1$,
up to a  common  scalar factor.
This makes $B_1=A_1[\la_1]_q+A_2[\la_1-1]_q$ into $[\la_1+\frac{P}{2}-1]_q$, which does not vanish for generic
$\la$.
\end{proof}

\begin{corollary}
a) The system $\{\phi_m v_\la\}_{m=1}^{d^0_P}$ forms a basis in $[M_\la]_{\la-2\ve_1}$. b) The vector
 $f_{2\ve_{1}}^{(P)}v_\la=\sum_{m=1}^{d^0_P} A_{m}\phi_{m}v_\la$, where $A_m$ are given by (\ref{A_level_1}),
 is a generating coefficient. c) It is a unique generating coefficient of weight $\la-2\ve_1$, up to
 a scalar factor.
\end{corollary}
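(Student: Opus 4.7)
My plan is to establish the three parts in the order (a), (c), (b), since the verification of (b) becomes almost automatic once (c) is in hand. All three parts rest on the injectivity of $\hat E$ from Lemma \ref{Einjectve} and on the identification, via Lemma \ref{phi'}, of $\phi'_m v_\la$ with a basis of the one-dimensional weight space $[M_\la]_{\la-2\ve_1+\al_m}$.

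For part (a), the weight space $[M_\la]_{\la-2\ve_1}$ has dimension $d^0_P$ by the isomorphism $M_\la\simeq U_\l^-$. The linear map $\C^{d^0_P}\ni (A_m)\mapsto \sum_m A_m\phi_m v_\la$ is the first arrow in the factorization defining $\hat E$, so injectivity of $\hat E$ forces this first arrow to be injective. Linear independence of $\{\phi_m v_\la\}_{m=1}^{d^0_P}$ combined with the dimension count yields the basis assertion.

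For part (c), let $y\in [M_\la]_{\la-2\ve_1}$ be any generating coefficient. Expand $y=\sum_m A_m\phi_m v_\la$ in the basis from (a); then $Ey=\sum_m B_m\phi'_m v_\la$ with $B_m$ as in Appendix A. Because $\phi'_m v_\la$ spans the one-dimensional weight space $[M_\la]_{\la-2\ve_1+\al_m}$, the $\al_m$-weight component $e_{\al_m}y$ must equal $B_m\phi'_m v_\la$. Hence the condition $E'y=0$ from Corollary \ref{generating_coeff} is equivalent to the linear system $B_m=0$ for $m=2,\ldots,p+1$. The proof of Lemma \ref{Einjectve} already identifies (\ref{A_level_1}) as the unique (up to scale) solution of that system, so $y$ must be a scalar multiple of $f_{2\ve_1}^{(P)}v_\la$.

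For part (b), setting $A_m$ as in (\ref{A_level_1}) gives $E'y=0$ directly from the computation recalled in (c). To secure the remaining condition $e_{\al_1}^2 y=0$ from Corollary \ref{generating_coeff}, I would avoid a direct Serre-relation calculation and instead invoke the general fact (cited earlier in the paper) that for generic $\la$ a singular vector of weight $\la-\ve_1$ does exist in $\C^N\tp M_\la$. Its generating coefficient satisfies both conditions of Corollary \ref{generating_coeff} by definition, and by (c) it must be a nonzero scalar multiple of $f_{2\ve_1}^{(P)}v_\la$; hence the latter itself satisfies $e_{\al_1}^2 y=0$. The main obstacle in any direct approach would be precisely this annihilation condition, which demands a careful reduction of $e_{\al_1}\phi'_1 v_\la$ modulo the quantum Serre relations; the existence-plus-uniqueness route sidesteps the computation at the cost of depending on the prior general theory of singular vectors.
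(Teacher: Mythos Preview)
Your approach is essentially the one the paper takes, including the existence-plus-uniqueness manoeuvre for part (b), which the paper also uses implicitly rather than verifying $e_{\al_1}^2 y=0$ by hand. Two small gaps separate your argument from a complete proof.

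First, Lemma \ref{Einjectve} assumes $N\geqslant 5$, and Corollary \ref{generating_coeff} gives different annihilation conditions for $N=3,4$. You never treat these small cases; the paper handles them separately (there $d^0_P=1$, so (a) is trivial and one checks directly that $f_{2\ve_1}^{(1)}v_\la=[\tfrac12]_q f_{\al_1}^2 v_\la$ and $f_{2\ve_1}^{(2)}v_\la=f_{\al_1}f_{\al_2}v_\la$ satisfy the relevant conditions).

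Second, Lemma \ref{Einjectve} asserts injectivity of $\hat E$ only for \emph{generic} $\la$, so your argument for (a) yields linear independence of $\{\phi_m v_\la\}$ only generically. The paper closes this by passing through the isomorphism $M_\la\simeq U_\l^-$: linear independence of $\{\phi_m v_\la\}$ for generic $\la$ gives linear independence of the $\la$-free elements $\{\phi_m\}$ in $[U_\l^-]_{-2\ve_1}$, and hence of $\{\phi_m v_\la\}$ for \emph{all} $\la$. This step matters because (c) uses the basis from (a), and the statement is meant to hold without restriction on $\la$.
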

\begin{proof}
The statement is obvious for $N=3,4$ with $p=0$ and, respectively, $p=1$. Then $d^0_P=1$ and the vectors   $f_{2\ve_{1}}^{(1)}v_\la=[\frac{1}{2}]_qf_{\al_1}^2v_\la$,  $f_{2\ve_{1}}^{(2)}v_\la=f_{\al_1}f_{\al_2}v_\la$
satisfy the conditions $e_{\al_1}^3f_{2\ve_{1}}^{(1)}v_\la=0$ and $e_{\al_1}^2f_{2\ve_{1}}^{(2)}v_\la=e_{\al_2}^2f_{2\ve_{1}}^{(2)}v_\la=0$,
as required.

Now suppose that $N\geqslant 5$. Since the operator $\hat E$ is injective, the map $\C^{d^0_P}\to \Phi^{0}$ is
injective too. It is surjective by construction, hence it is a bijection.
For generic $\la$, the vectors $\{\phi_mv_\la\}^{d^0_P}_{m=1}$ form a basis in $\Phi^{0}$ and hence in $[M_\la]_{\la-2\ve_1}$, as the latter has dimension $d^0_P$. The vectors $\{\phi_m\}^{d^0_P}_{m=1}$ form a basis in $[U_\l^-]_{-2\ve_1}$, due to the
linear isomorphism  $[M_\la]_\mu\simeq [U_\l^-]_{\mu-\la}$. These vectors are independent of $\la$, hence they form a
basis at all $\la$, as well as $\{\phi_mv_\la\}^{d^0_P}_{m=1}$.
This implies that $f_{2\ve_{1}}^{(P)}v_\la\not =0$, and it is a unique generating coefficient, up to a scalar factor.
\end{proof}
\subsection{The case $l=1$}
\label{Sec_l=1}
To keep reference to the symmetric case considered in the previous section, we enumerate the simple roots $\Pi_\g=\{\al_i\}_{i=0}^{p+1}$. Then the roots
$\{\al_i\}_{i=1}^{p+1}$ correspond to the subalgebra $U_q(\g'_1)\subset U_q(\g)$. Under this embedding,
we regard $\phi_m$ and $f_{2\ve_{1}}^{(P)}$ constructed in the previous section as elements of $U_q(\g)$.

Observe that $d_{P}^{1}=\dim [M_\la]_{\la-\ve_0-\ve_1}$ is equal to $3p+3$ for odd $N$ and $3p+1$ for even $N$.
The only generator which does not commute with $f_{\al_0}$ is $f_{\al_1}$, and it enters $\phi_m$ twice.
There are three possible ways to allocate $f_{\al_0}$ relative to these $f_{\al_1}$. We use this observation to construct
the basis in $[U^-_\l]_{-\ve_0-\ve_1}$ from the basis in $[U^-_\l]_{-2\ve_1}$.
For all $m = 1, \ldots , p+1$, define $\phi^1_m= f_{\al_0}\phi_m$ and $\phi^3_m= \phi_m f_{\al_0}$.
Define $\phi_m^2$ to be the monomial obtained from
$\phi_m$ by replacing the rightmost copy of $f_{\al_1}$ with $f_{\al_0}f_{\al_1}$.
For even $N$, the equality $\phi_{p+1}=\phi_{p}$ implies $\phi_{p+1}^1=\phi^1_{p}$ and $\phi_{p+1}^3=\phi^3_{p}$,
so we have effectively  $3p+1$ monomials for even $N$ and $3p+3$ monomials for odd $N$.

As in the symmetric case, for all $m\in[1,p+1]$ we define  ${\phi'}_m^i\in U_q(\g_-)$ of weight $-\ve_0-\ve_1+\al_{m}$ by deleting the leftmost copy of $f_{\al_m}$ from $\phi_m^i$. Note that ${\phi'}_{1}^1 ={\phi'}^2_{1}$ and, for even $N$,
${\phi'}_{p+1}^1 ={\phi'}_{p}^1$, ${\phi'}_{p+1}^3 ={\phi'}^3_{p}$. Put
$r_m=2$ for $m=1$ and $r_m=1$ for $m>1$.
\begin{lemma}
For all $m=1,\ldots, p+1$, the vectors $\{{\phi'}^i_m\}_{i=r_m}^{3}\subset [U^-_\l]_{-\ve_0-\ve_1+\al_m}$ are linearly independent.
\end{lemma}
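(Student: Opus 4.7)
The plan is to prove linear independence of $\{{\phi'}^i_m v_\la\}$ in the parabolic Verma module $M_\la$ for generic $\la$, and then transfer this back to $U^-_\l$ via the $U_q(\g_-)$-module isomorphism $M_\la\simeq U^-_\l$ sending $v_\la\mapsto 1$. Since the monomials ${\phi'}^i_m$ do not depend on $\la$, independence at a single generic $\la$ forces independence in $U^-_\l$.

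The central technical ingredient is that $e_{\al_0}$ commutes with every $f_{\al_j}$ for $j\neq 0$ and kills $v_\la$, so its action on each ${\phi'}^i_m v_\la$ collides only with the unique $f_{\al_0}$ letter and contributes one Cartan commutator $k_{\al_0}$, evaluated at the weight standing to its right. This yields $e_{\al_0}\,{\phi'}^i_m v_\la=[c_i(\la)]_{q_{\al_0}}\,\phi'_m v_\la$ with pairwise distinct shifts $c_1(\la)=\la_0-\la_1+2$, $c_2(\la)=\la_0-\la_1+1$, $c_3(\la)=\la_0-\la_1$ for $m\ge 2$ (only $c_1=\la_0-\la_1+1$ and $c_3=\la_0-\la_1$ appear for $m=1$), while $\phi'_m v_\la\neq 0$ by Lemma~\ref{phi'}. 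To promote this single linear constraint into full independence, I assemble the Shapovalov Gram matrix $G_{ij}(\la)=\langle{\phi'}^i_m v_\la,{\phi'}^j_m v_\la\rangle$; writing each entry as $\chi_\la(\omega({\phi'}^i_m)\,{\phi'}^j_m)$ and using the same free commutation $[e_{\al_0},f_{\al_j}]=0$ for $j\neq 0$ reduces every entry to $G_{ij}(\la)=p_{ij}(q^{\la_0-\la_1})\cdot\langle\phi'_m v_\la,\phi'_m v_\la\rangle$ with explicit Laurent-monomial prefactors. Since $\langle\phi'_m v_\la,\phi'_m v_\la\rangle=[\la_1]_q[\la_1-1]_q\neq 0$ for generic $\la$, invertibility of $G(\la)$ reduces to $\det(p_{ij})\neq 0$, which is verified by direct $3\times 3$ (respectively $2\times 2$) expansion.

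The main technical obstacle is the calculation of $\langle{\phi'}^3_m v_\la,{\phi'}^3_m v_\la\rangle$, where the extra $f_{\al_0}$ sits to the right of $\phi'_m$: here an $e_{\al_0}$ from $\omega(\phi'_m)$ must be carried through $\phi'_m$, passing the two internal copies of $f_{\al_1}$ with trivial commutators but nontrivial Cartan weight shifts, before finally cancelling against the rightmost $f_{\al_0}$, and the accumulated $q$-exponents have to be tracked carefully in every entry $p_{ij}$. In the degenerate case $m=1$, where ${\phi'}^1_1={\phi'}^2_1$, the lemma collapses to linear independence of $f_{\al_0}f_{\al_1}$ and $f_{\al_1}f_{\al_0}$ in $U_q(\g_-)$ modulo the right ideal generated by $\l_-$, which is immediate from the PBW theorem for $U_q(\n_-^\l)$.
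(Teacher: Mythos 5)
Your overall strategy coincides with the paper's: prove linear independence of $\{{\phi'}^i_m v_\la\}$ in $M_\la$ via the Shapovalov Gram matrix for generic $\la$, then use $\la$-independence of the monomials to transfer the conclusion to $U^-_\l$. However, the key computational step you rely on is false: the Gram entries do \emph{not} factor as $G_{ij}(\la)=p_{ij}(q^{\la_0-\la_1})\cdot\langle\phi'_m v_\la,\phi'_m v_\la\rangle$. The reason is that when the single $e_{\al_0}$ is contracted against the single $f_{\al_0}$, the remaining contractions (which produce the $[\la_1+\ast]_q$ factors) are evaluated at weights shifted by $\al_0$ or not, depending on where $f_{\al_0}$ sits inside the word; so the residual factor is a \emph{shifted} analogue of $\langle\phi'_m v_\la,\phi'_m v_\la\rangle$, not the fixed norm. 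Concretely, already in the smallest case $m=1$, $P=3$ (classical limit), one finds $\|\phi'_1 f_{\al_0}v_\la\|^2\propto[\la_1+1]_q[\la_0-\la_1]_q$ while $\|\phi'_1 v_\la\|^2\propto[\la_1]_q$, so the ratio depends on $\la_1$, not only on $\la_0-\la_1$. The paper's actual Gram matrix (for $m\geqslant 2$) has diagonal entries $[\la_1]_q[\la_1-1]_q[\la_0-\la_1+2]_q$, $[\la_1]_q^2[\la_0-\la_1+1]_q$, $[\la_1]_q[\la_1+1]_q[\la_0-\la_1]_q$, up to a common factor $[\la_1]_q$, confirming that the $\la_1$-dependence varies from entry to entry. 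Consequently your reduction of invertibility to ``$\det(p_{ij})\neq 0$ by a $3\times 3$ expansion'' does not stand; one must compute the genuine entries and the determinant, which the paper does, obtaining $[\la_0-\la_1]_q[\la_1]_q^3[\la_1-1]_q[\la_0+1]_q^2$ for $m\geqslant 2$ and $[\la_0-\la_1]_q[\la_1]_q[\la_0+1]_q$ for $m=1$, generically nonzero.

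Two secondary points. First, the opening observation that $e_{\al_0}{\phi'}^i_m v_\la=[c_i(\la)]_q\,\phi'_m v_\la$ with distinct shifts is correct (it matches the identities used later in Lemma \ref{lemma3.8}), but, as you note, it alone does not yield independence. Second, the $m=1$ case does not ``collapse to independence of $f_{\al_0}f_{\al_1}$ and $f_{\al_1}f_{\al_0}$ immediate from PBW'': the vectors are $w\,f_{\al_0}f_{\al_1}$ and $w\,f_{\al_1}f_{\al_0}$ for a common long word $w$ in the other $f_{\al_j}$, taken in the quotient $U^-_\l=U_q(\g_-)/U_q(\g_-)\l_-$, and left multiplication by $w$ is not obviously injective there; the paper treats $m=1$ by the same Gram-matrix computation (the $2\times 2$ case).
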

\begin{proof}
One can check that the Gram matrix of the system $\{{\phi'}^i_m v_\la\}_{i=r_m}^3$ with respect to the Shapovalov form
on $M_\la$ is
$$
\left(
\begin{array}{lllll}
& [\la_{1}]_{q}[\la_{0}-\la_1+1]_{q}
& [\la_{1}]_{q}[\la_{0}-\la_1 ]_{q}
\\[1pt]
& [\la_{1}]_{q}[\la_{0}-\la_1 ]_{q}
& [\la_{1}+1]_{q}[\la_{0}-\la_1]_{q}\\
\end{array}
\right),
\quad m=1,
$$
$$
[\la_{1}]_{q}\left(\begin{array}{lll}
[\la_{1}-1]_{q} [\la_{0}-\la_1+2]_{q}
&
[\la_{1}-1]_{q} [\la_{0}-\la_1+1]_{q}
&
[\la_{1}-1]_{q}[\la_{0}-\la_1]_{q}
\\[1pt]
[\la_{1}-1]_{q} [\la_{0}-\la_1+1]_{q}
&
[\la_{1}]_{q}	[\la_{0}-\la_1+1]_{q}
&
[\la_{1}]_{q}[\la_{0}-\la_1]_{q}
\\[1pt]
[\la_{1}-1]_{q}[\la_{0}-\la_1]_{q}
&
[\la_{1}]_{q}[\la_{0}-\la_1]_{q}
&
[\la_{1}+1]_{q}[\la_{0}-\la_1]_{q}
\end{array}
\right),
$$
$m=2, \ldots, n,$ for either parity of $N$. Its determinant is equal to
$$
\begin{array}{lll}
[\la_{0}-\la_1]_{q}[\la_{1}]_{q}[\la_{0}+1]_{q}, \quad m=1,\\[1pt]
[\la_{0}-\la_1]_{q}[\la_{1}]_{q}^3[\la_{1}-1]_{q}[\la_{0}+1]_{q}^2,\quad m=2,\ldots, p+1.\\
\end{array}
$$
It does not vanish for generic $\la$,  hence $\{{\phi'}^i_m v_\la\}_{i=r_m}^3$ are linearly independent. This is also true for
all $\la$, since ${\phi'}^i_m$ are independent of $\la$.
\end{proof}
All $\phi^i_mv_\la$ are annihilated by $e_{\al_0}^2$, as $f_{\al_0}$ enters only once. Therefore their linear combination annihilated
by $e_{\al_i}$, $i>1$, is a generating coefficient.

Present $\C^{d_P^{1}}=\C^{p+1}\oplus \C^{p+1}\oplus \C^{p+1}$ for odd $N$ and
$\C^{d_P^{1}}=\C^{p}\oplus \C^{p+1}\oplus \C^{p}$ for even $N$. Let the upper index of
$(A^i_m)\in \C^{d_P^{1}}$ label the summand in this decomposition while the lower index mark the coordinate within this
summand.

Denote by  $\hat E$ the composition $\C^{d_P^{1}}\to \Phi^{1}\to M_\la$ of linear maps acting by
$(A_m^i)\mapsto \sum_{m,i} A_m^i \phi_m^i =y\mapsto E y$.
It acts by  $\hat E\colon (A_m^i)\mapsto \sum_{i=r_m}^3 B_m^i{\phi'}_m^iv_\la$, where the scalar factors $B_m^i$
are given in Appendix.
Define
$f_{\ve_{0}+\ve_{1}}^{(P)}=\sum_{m,i}A_m^i\phi_m^i$, where $A_m^i$ are as follows:
\be A_{m}^{k } &=&
\begin{cases}
( -1 )^{m+1}  [\la_{1}+P-m]_{q}[\la_{1}+\frac{P}{2}]_{q}, & k=1, \\
( -1 )^{m} 	 (q^{m-\frac{P}{2}}+q^{-m+\frac{P}{2}})_q[\la_{1}+\frac{P}{2} -1]_{q}[\la_{1}+\frac{P}{2}]_{q}, & k=2, \\
( -1 )^{m+1} [\la_{1}+m-1 ]_{q}[\la_{1}+\frac{P}{2} -1]_{q}, & k=3,
\end{cases}
\nn
\ee
for $m=1,\ldots,d^1_P$ apart from
$A_{p}^2,A_{p+1}^2$ for even $N$, which are set to $(-1)^p[\la_{1}+\frac{P}{2} -1]_{q}[\la_{1}+\frac{P}{2}]_{q}$.

\begin{lemma}
Up to a scalar factor, the vector $f_{\ve_{0}+\ve_{1}}^{(P)}v_\la$ is a unique solution of the system $e_{\al_i}f_{\ve_{0}+\ve_{1}}^{(P)}v_\la=0$ for all $i=1,\ldots,p+1$.
Furthermore, $e_{\al_0}f_{\ve_{0}+\ve_{1}}^{(P)}v_\la=[\la_{0}+\la_{1}+P]_{q}f_{2\ve_{1}}^{(P)}v_\la$.
\label{lemma3.8}
\end{lemma}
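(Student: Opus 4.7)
The plan is to parametrize $y \in [M_\la]_{\la-\ve_0-\ve_1}$ in the basis $\{\phi_m^i v_\la\}$ as $y = \sum_{m,i} A_m^i \phi_m^i v_\la$ and reduce the problem to a linear system for the coefficients. Since $f_{\al_0}$ appears exactly once in every $\phi_m^i$, we have $e_{\al_0}^2 y = 0$ automatically, so by Corollary \ref{generating_coeff} the requirement that $y$ be a generating coefficient of a singular vector in $\C^N\tp M_\la$ reduces precisely to $E'y = 0$ with $E' = \sum_{i=1}^{p+1} e_{\al_i}$, which is exactly the system in the statement. The map $\hat E\colon (A_m^i) \mapsto E'\bigl(\sum A_m^i \phi_m^i v_\la\bigr)$ lands in the span of the vectors ${\phi'}_m^i v_\la$ with coefficients $B_m^i$ tabulated in the Appendix, and by the preceding lemma these target vectors are linearly independent for generic $\la$, so $\hat E(A_m^i) = 0$ is equivalent to $B_m^i = 0$ for every admissible $(m,i)$.

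For existence I would substitute the explicit $A_m^i$ from the statement into the Appendix formulas for $B_m^i$ and verify the vanishing case-by-case in the upper index $k \in \{1,2,3\}$, paying attention to the boundary value $m = 1$ (where $r_m = 2$) and, for even $N$, to the identifications $\phi_{p+1} = \phi_p$ and ${\phi'}_{p+1}^{1,3} = {\phi'}_p^{1,3}$ at $m = p, p+1$. This is a direct $q$-number manipulation, collapsing terms through standard identities of the type $[a+1]_q[b]_q - [a]_q[b+1]_q = [a-b]_q$.

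For uniqueness, a dimension count already yields $\dim \ker \hat E \geq 1$: there are $d_P^1$ unknowns $A_m^i$ against only $d_P^1 - 1$ independent target coefficients $B_m^i$ ($3p+2$ for odd $N$, $3p$ for even $N$). To sharpen this to equality for generic $\la$, I would exhibit a $(d_P^1{-}1)\times(d_P^1{-}1)$ minor of the system $\{B_m^i\}$ whose determinant is a nonzero polynomial in the $q$-numbers of $\la$, and then invoke Lemma \ref{dual-contragredient} to extend the conclusion: for generic $\la$ the space of weight-$(\la-\ve_{l+1})$ singular vectors in $\C^N\tp M_\la$ is at most one-dimensional, hence so is the corresponding space of generating coefficients.

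Finally, for $e_{\al_0} f_{\ve_0+\ve_1}^{(P)} v_\la$, I would use that $[e_{\al_0}, f_{\al_j}] = 0$ for $j \geq 1$ together with $e_{\al_0} v_\la = 0$: commuting $e_{\al_0}$ through every Chevalley factor of $\phi_m^i$ except the unique $f_{\al_0}$ converts that $f_{\al_0}$ into $[h_{\al_0}]_q$ evaluated on the weight of the remaining subword, giving a $q$-number whose value depends on the placement $k$ and index $m$. Summing $A_m^i$ against these evaluations and collecting the coefficient of each $\phi_m v_\la$, I expect the result to factor as $[\la_0 + \la_1 + P]_q \cdot A_m$ with $A_m$ as in (\ref{A_level_1}), identifying the total with $[\la_0+\la_1+P]_q f_{2\ve_1}^{(P)} v_\la$. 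The main obstacle is not conceptual but arithmetic---the $q$-telescoping needed both for $B_m^i = 0$ and for extracting the common factor $[\la_0+\la_1+P]_q$ in the $e_{\al_0}$-action, especially in the even-$N$ boundary cases $m = p, p+1$ where the identifications among the $\phi_m^i$ must be handled consistently.
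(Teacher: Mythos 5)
Your computational plan coincides with the paper's: for the second claim the paper, like you, pushes $e_{\al_0}$ through each $\phi_m^i$ (turning the single $f_{\al_0}$ into a $q$-bracket) and rests on the per-$m$ identity $\sum_{i=1}^3A_m^i[\la_0-\la_1+3-i]_q=[\la_0+\la_1+P]_qA_m$, with the extra $A_{p+1}^2$ term handled separately for even $N$; and for the first claim the paper merely declares a ``lengthy straightforward calculation'', which is exactly the system $B_m^i=0$ from the Appendix that you propose to solve, in parallel with Lemma \ref{Einjectve} in the symmetric case.

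The gap is in how you justify uniqueness. You begin by parametrizing an arbitrary $y\in[M_\la]_{\la-\ve_0-\ve_1}$ ``in the basis $\{\phi_m^iv_\la\}$'', but linear independence of these $d_P^1$ vectors is precisely the Proposition that \emph{follows} the lemma and is deduced from it (via injectivity of $\hat E$), so as written this is circular; the uniqueness has to be read, as the paper intends, at the level of the coefficient vector $(A_m^i)$, i.e.\ inside the span $\Phi^{1}$, where your corank-one analysis of the $B$-system is self-contained and is the right argument. Your fallback via Lemma \ref{dual-contragredient} does not repair the full-weight-space version either: a solution of $e_{\al_i}y=0$, $i=1,\ldots,p+1$, is a generating coefficient only if in addition $e_{\al_1}^2y=0$ (in the notation of Corollary \ref{generating_coeff}; here $e_{\al_0}^2y=0$), which holds automatically on $\Phi^{1}$ because $f_{\al_0}$ occurs once in each $\phi_m^i$ but not for a general vector of that weight, so the at-most-one count of singular vectors of the relevant weight does not bound the solution space you set up. Finally, your minor argument yields uniqueness only for generic $\la$, whereas the lemma carries no genericity caveat; this is harmless for the downstream basis Proposition (the $\phi_m^i$ are $\la$-independent, so one good $\la$ suffices), but you should either state the restriction or exhibit the corank-one property of the $B$-system for all $\la_1$, noting that the system involves $\la_1$ only.
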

\begin{proof}
The first part of the statement is proved by a lengthy straightforward calculation, which is omitted here.
Let us prove the second statement.
Observe the identities
$$
\sum_{i=1}^3A_m^i[\la_0-\la_1+3-i]_q=[\la_{0}+\la_{1}+P]_{q}A_m,
$$
which hold for $m=1, \ldots, p+1$, odd $N$,  and for $m=1, \ldots, p-1$, even $N$.
This readily implies the statement for odd $N$:
$$
e_{\al_0}f_{\ve_{0}+\ve_{1}}^{(P)}=\sum_{m=1}^{p+1}\sum_{i=1}^3A_m^ie_{\al_0} \phi_m^iv_\la=\sum_{m=1}^{p+1}\left(\sum_{i=1}^3A_m^i[\la_0-\la_1+3-i]_q\phi_mv_\la\right)
=[\la_{0}+\la_{1}+P]_{q}
f_{2\ve_{1}}^{(P)}.
$$
If $N$ is even, we have also
$$\sum_{i=1}^3A_{p}^i[\la_0-\la_1+3-i]_q+ A_{p+1}^2[\la_0-\la_1+1]=[\la_{0}+\la_{1}+P]_{q}A_{p}.$$
Then, for even $N$,
\be
e_{\al_0}f_{\ve_{0}+\ve_{1}}^{(P)}v_\la&=&\sum_{m=1}^{p}\sum_{i=1}^3A_m^ie_{\al_0} \phi_m^iv_\la+A_{p+1}^2e_{\al_0} \phi_{p+1}^2v_\la=\sum_{m=1}^{p-1}\left(\sum_{i=1}^3A_m^i[\la_0-\la_1+3-i]_q\phi_mv_\la\right)
\nn\\&+&\left(\sum_{i=1}^3A_{p}^i[\la_0-\la_1+3-i]_q+ A_{p+1}^2[\la_0-\la_1+1]\right)\phi_{p}v_\la=[\la_{0}+\la_{1}+P]_{q}
f_{2\ve_{1}}^{(P)},
\nn
\ee
as required.
\end{proof}
\begin{propn}
The vectors $\phi^i_m$ form a basis in $[U^-_\l]_{-\ve_0-\ve_1}$. Up to a scalar factor,
$f_{\ve_{0}+\ve_{1}}^{(P)}v_\la$ is a unique generating coefficient of the weight $\la-\ve_0-\ve_1$.
\end{propn}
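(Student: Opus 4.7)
The plan is to reduce both claims to results already in hand: the basis assertion follows from a dimension count plus linear independence, and the uniqueness of the generating coefficient follows from Lemma \ref{lemma3.8} combined with an obvious vanishing observation for $e_{\al_0}^2$.

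For the basis claim, I would first invoke the dimension count already recorded in the preamble to Section 3.2: $\dim[U^-_\l]_{-\ve_0-\ve_1}=d^1_P$. After the identifications $\phi^1_{p+1}=\phi^1_p$ and $\phi^3_{p+1}=\phi^3_p$ in the even-$N$ case, the list $\{\phi^i_m\}$ contains exactly $d^1_P$ vectors, so it is enough to check that they are linearly independent, or equivalently (via the $U_q(\g_-)$-isomorphism $M_\la\simeq U^-_\l$) that $\{\phi^i_m v_\la\}$ are linearly independent in $[M_\la]_{\la-\ve_0-\ve_1}$ for generic $\la$. The cleanest way to do this is a Gram-matrix calculation for the Shapovalov form: using the factorisation $\phi^i_m=f_{\al_m}\phi'^i_m$, the pairing $\langle\phi^i_m v_\la,\phi^j_{m'}v_\la\rangle$ reduces via $e_{\al_m}f_{\al_m}=f_{\al_m}e_{\al_m}+[h_{\al_m}]_q$ to the already-computed inner products of the $\{\phi'^i_m v_\la\}_{i=r_m}^3$. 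The diagonal $m=m'$ blocks are then exactly (up to the common factor $[h_{\al_m}]_q$ evaluated on the weight of $\phi'^j_m v_\la$) the non-degenerate matrices displayed in the lemma preceding Lemma \ref{lemma3.8}, while the off-diagonal $m\neq m'$ blocks are of lower order in the $[\la_j]_q$'s, so that the full determinant is a non-zero Laurent polynomial in $q^{\la_j}$. Linear independence for generic $\la$ then transfers to $U^-_\l$, since the $\phi^i_m$ do not depend on $\la$.

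For the uniqueness of the generating coefficient, Corollary \ref{generating_coeff} (applied with the shift of indexing in which $\al_0$ plays the role of $\al_1$) says that a weight vector $y\in[M_\la]_{\la-\ve_0-\ve_1}$ is a generating coefficient precisely when $e_{\al_0}^2 y=0$ and $e_{\al_i}y=0$ for $i=1,\ldots,p+1$. Since the generator $f_{\al_0}$ appears exactly once in each $\phi^i_m$, every $\phi^i_m v_\la$ satisfies $e_{\al_0}^2\phi^i_m v_\la=0$; by the basis statement already proved, every vector in $[M_\la]_{\la-\ve_0-\ve_1}$ then satisfies this condition automatically. The remaining conditions $e_{\al_i}y=0$, $i\geqslant 1$, are handled by Lemma \ref{lemma3.8}, which asserts that $f^{(P)}_{\ve_0+\ve_1}v_\la$ is their unique solution in this weight space up to a scalar. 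Combining the two statements yields the claimed uniqueness.

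The step I expect to be the main obstacle is the linear-independence check. The Gram matrix has size $d^1_P\times d^1_P$, which is substantially larger than the $3\times 3$ blocks computed earlier, and although the factorisation $\phi^i_m=f_{\al_m}\phi'^i_m$ gives a block structure controlled by the previously established independence of $\{\phi'^i_m\}_{i=r_m}^3$, the off-diagonal blocks do not vanish and must be shown to not spoil non-degeneracy. A more hands-on alternative, which would avoid an explicit $d^1_P\times d^1_P$ determinant, is to test a hypothetical linear dependence against the operators $e_{\al_0},e_{\al_1},\ldots,e_{\al_{p+1}}$, using that $e_{\al_0}$ takes $\phi^i_m v_\la$ to $[\la_0-\la_1+3-i]_q\phi_m v_\la$ (as implicit in the proof of Lemma \ref{lemma3.8}) and that $\{\phi_m v_\la\}$ is already known to be a basis from the corollary of Section 3.1; this reduces to the independence of $\{\phi'^i_m\}_{i=r_m}^3$ for each fixed $m$ via a standard triangularity argument.
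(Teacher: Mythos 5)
Your dimension count, the transfer of generic-$\la$ independence to $U^-_\l$ via the $\la$-independence of the $\phi^i_m$, and the deduction of the second claim (once the basis statement is known, every vector of weight $\la-\ve_0-\ve_1$ is automatically annihilated by $e_{\al_0}^2$, so Corollary \ref{generating_coeff} reduces the generating-coefficient condition to $e_{\al_i}y=0$, $i\geqslant 1$, and Lemma \ref{lemma3.8} gives uniqueness) all agree with the paper. The gap is in the linear-independence step. Your primary route via the full Gram matrix is not a proof as it stands: the off-diagonal entries $\langle\phi^i_m v_\la,\phi^j_{m'}v_\la\rangle$ with $m\neq m'$ are neither computed nor shown to be dominated by the diagonal blocks (they involve quantities such as $e_{\al_m}\phi'^j_{m'}v_\la$ that appear nowhere in the paper), and for even $N$ the identifications $\phi^1_{p+1}=\phi^1_p$, $\phi^3_{p+1}=\phi^3_p$ disturb the block structure; ``lower order in the $[\la_j]_q$'' is an expectation, not an argument, as you yourself flag.

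Your fallback route is essentially the paper's proof, but its key step is mis-stated. Testing a dependence $\sum_{m,i}A^i_m\phi^i_mv_\la=0$ against $e_{\al_m}$, $m\geqslant 1$, and using the independence of $\{\phi'^i_m\}_{i=r_m}^3$ only converts the vector equations into the scalar system $B^i_m=0$; this system does not decouple over $m$ (each $B^i_m$ mixes coefficients with adjacent values of $m$), and, crucially, it has a one-dimensional kernel, spanned by exactly the coefficient vector of $f^{(P)}_{\ve_0+\ve_1}$ --- that is the content of the first part of Lemma \ref{lemma3.8}, proved there by the ``lengthy straightforward calculation''. So no per-$m$ triangularity can finish; the existence of the generating coefficient itself obstructs it. One must then bring in the $e_{\al_0}$ equations together with the identities $\sum_{i}A^i_m[\la_0-\la_1+3-i]_q=[\la_0+\la_1+P]_qA_m$ from the proof of Lemma \ref{lemma3.8}: the residual kernel is sent by $e_{\al_0}$ to $[\la_0+\la_1+P]_qf^{(P)}_{2\ve_1}v_\la$, which is nonzero (by the Corollary of Section \ref{Sec_l=0}) whenever the quantum bracket does not vanish, so the combined kernel is trivial for such $\la$. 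This is precisely the paper's argument: fix $\la_1=c$, deduce from Lemma \ref{lemma3.8} that $\hat E$ is injective off the locus where that bracket vanishes, conclude independence of $\{\phi^i_mv_\la\}$ there, and remove the restriction by $\la$-independence of the $\phi^i_m$. To repair your write-up, replace the ``standard triangularity argument'' by an explicit appeal to the uniqueness part of Lemma \ref{lemma3.8} (identifying the kernel of the $e_{\al_m}$-system) followed by the $e_{\al_0}$ step with the resulting non-vanishing condition on $\la_0$.
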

\begin{proof}
Observe that $d_P^1$ is equal to the dimension of $[U^-_\l]_{-\ve_0-\ve_1}$, so
we need to prove only linear independence.
Fix a constant $c$ and restrict $\la$ to the hyperplane $\la_1=c$. By Lemma \ref{lemma3.8}, the map $\hat E\colon \C^{d_P^1}\to \Phi^{1}\to M_\la$ is injective for all
$\la$ such that $[\la_{0}+c+2n-1]_{q}\not =0$. Since the map $\C^{d_P^1}\to \Phi^{1}$ is
surjective, the map $E\colon \Phi^{1}\to M_\la$ is injective too. This implies that $\phi^i_mv_\la$
are linearly independent for such $\la$. Since $\phi^i_m$ are independent of $\la_0$, they are linearly
independent at all $\la$ subject to $\la_1=c$, and so are $\phi^i_mv_\la$. As $c$ is arbitrary, the statement
holds true for all $\la$.
\end{proof}
\subsection{The case $l=2$}
\label{Sec_l=2}
In order to relate our calculation to already considered cases $l=0,1$, we enumerate the roots as $\al_{-1},\al_0,\al_1,\ldots, \al_{p+1}$.
We are looking for the generating coefficient of weight $\la-\ve_{-1}-\ve_1$.
It is an element of $M_\la$ satisfying the equations $e_{\al_{-1}}^2y=e_{\al_j}y=0$, $j\geqslant 0$.

Define the element
\be
f_{\ve_{-1}+\ve_{1}}^{(P)}
&= f_{\al_{-1}}f_{\ve_{0}+\ve_{1}}^{(P)}[h_{\ve_0}+h_{\ve_1}+P+1]_{q} - f_{\ve_{0}+\ve_{1}}^{(P)} f_{\al_{-1}}[h_{\ve_0}+h_{\ve_1}+P]_{q} \in U_q(\b_-),
\ee
 of weight $-\ve_{-1}-\ve_0-\ve_1$.
\begin{propn}
The element $f_{\ve_{-1}+\ve_{1}}^{(P)}v_\la\in M_\la$ is a unique generating coefficient
of weight $-\ve_{-1}-\ve_0-\ve_1$.
Furthermore,
$$
e_{\al_{-1}}f_{\ve_{-1}+\ve_{1}}^{(P)}v_\la=[\la_{-1}+\la_{1}+P+1]_{q}f_{\ve_{0}+\ve_{1}}^{(P)}v_\la.
$$

\end{propn}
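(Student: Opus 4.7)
The plan is to verify the three assertions---the singular-vector property, the $e_{\al_{-1}}$-commutation formula, and uniqueness---by direct computation, exploiting Lemma \ref{lemma3.8} together with the key observation that $f_{\al_{-1}}v_\la$ is itself a highest weight vector for the subalgebra generated by $\{e_{\al_i}\}_{i\geq 0}$, whose new highest weight $\la - \al_{-1}$ has unchanged $\ve_1$-component and $\ve_0$-component shifted by $+1$. The definition of $f_{\ve_{-1}+\ve_1}^{(P)}$ as a Cartan-twisted $q$-commutator of $f_{\al_{-1}}$ with $f_{\ve_0+\ve_1}^{(P)}$ is modeled on the dynamical root vector recursion used earlier in the section, and both arguments proceed along the same lines.

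For the $e_{\al_{-1}}$-formula I would proceed as follows. Since $f_{\ve_0+\ve_1}^{(P)}$ involves only $f_{\al_i}$ for $i\geq 0$, it commutes with $e_{\al_{-1}}$, and the only nontrivial contribution comes from $[e_{\al_{-1}}, f_{\al_{-1}}] = [h_{\al_{-1}}]_q$. After evaluating both Cartan prefactors on $v_\la$ to scalars and tracking the Cartan shift arising when moving $[h_{\al_{-1}}]_q$ past $f_{\ve_0+\ve_1}^{(P)}$ and past $f_{\al_{-1}}$, the result is a scalar multiple of $f_{\ve_0+\ve_1}^{(P)}v_\la$ with coefficient $[\la_{-1}-\la_0+1]_q[\la_0+\la_1+P+1]_q - [\la_{-1}-\la_0]_q[\la_0+\la_1+P]_q$; the identity $[a+1]_q[b+1]_q - [a]_q[b]_q = [a+b+1]_q$, immediate from the definition of $[n]_q$, collapses this to $[\la_{-1}+\la_1+P+1]_q$. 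The relation $e_{\al_{-1}}^2 f_{\ve_{-1}+\ve_1}^{(P)}v_\la = 0$ then follows at once, since $e_{\al_{-1}}$ commutes with $f_{\ve_0+\ve_1}^{(P)}$ and annihilates $v_\la$.

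To verify $e_{\al_j}f_{\ve_{-1}+\ve_1}^{(P)}v_\la = 0$ for $j\geq 0$, I would exploit that $f_{\al_{-1}}v_\la$ is a highest weight vector for $\{e_{\al_i}\}_{i\geq 0}$. For $j\geq 1$, $\al_j$ is disjoint from $\al_{-1}$, so $e_{\al_j}$ commutes with $f_{\al_{-1}}$ and with both Cartan prefactors; applying Lemma \ref{lemma3.8} both to $v_\la$ and to $f_{\al_{-1}}v_\la$---noting that $f_{\ve_0+\ve_1}^{(P)}$ depends on $\la$ only through $\la_1$, which is preserved by the shift $\al_{-1}$---forces both terms in the expansion to vanish. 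For $j=0$, Lemma \ref{lemma3.8} applied to both $v_\la$ and $f_{\al_{-1}}v_\la$ gives $e_{\al_0}f_{\ve_0+\ve_1}^{(P)}v_\la = [\la_0+\la_1+P]_q f_{2\ve_1}^{(P)}v_\la$ and $e_{\al_0}f_{\ve_0+\ve_1}^{(P)}f_{\al_{-1}}v_\la = [\la_0+\la_1+P+1]_q f_{2\ve_1}^{(P)}f_{\al_{-1}}v_\la$, the extra $+1$ arising from the shift of the $\ve_0$-component. Using $f_{\al_{-1}}f_{2\ve_1}^{(P)} = f_{2\ve_1}^{(P)}f_{\al_{-1}}$ and combining the two terms, the prefactors $[\la_0+\la_1+P+1]_q[\la_0+\la_1+P]_q$ and $[\la_0+\la_1+P]_q[\la_0+\la_1+P+1]_q$ multiplying the same vector $f_{2\ve_1}^{(P)}f_{\al_{-1}}v_\la$ cancel exactly. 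This cancellation explains the precise choice of Cartan prefactors in the definition.

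For uniqueness, a dimension count analogous to Section \ref{Sec_l=1} suffices. A natural spanning set for $[U_\l^-]$ at the relevant weight is built from the basis $\{\phi_m^i\}$ of Section \ref{Sec_l=1} by inserting $f_{\al_{-1}}$ at each admissible position, its linear independence for generic $\la$ verified by a Shapovalov-form Gram matrix calculation. The constraints $e_{\al_j}y=0$ ($j\geq 0$) and $e_{\al_{-1}}^2 y=0$ then cut out a one-dimensional subspace of generating coefficients. The main obstacle in the whole proof is the $e_{\al_0}$ cancellation, which is sensitive to the precise Cartan shifts; everything else reduces to routine commutations and Lemma \ref{lemma3.8} combined with the highest-weight-vector interpretation of $f_{\al_{-1}}v_\la$.
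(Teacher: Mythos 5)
Your argument is correct, and it runs in the opposite direction to the paper's: the paper \emph{derives} the vector by an undetermined-coefficients ansatz $y=\sum_{m,k}(A^{k1}_m f_{\al_{-1}}\phi^k_m-A^{k2}_m\phi^k_m f_{\al_{-1}})$, using the uniqueness part of Lemma \ref{lemma3.8} to reduce the conditions $e_{\al_m}y=0$, $m\geqslant 1$, to two scalars $C^1,C^2$, and then solving the $e_{\al_0}$-system to get $C^1=[\la_0+\la_1+P+1]_q$, $C^2=[\la_0+\la_1+P]_q$ (existence and uniqueness come out of the same linear algebra); you instead \emph{verify} the explicitly given $f_{\ve_{-1}+\ve_1}^{(P)}v_\la$, with the clean device that $f_{\al_{-1}}v_\la$ is highest weight for the subalgebra generated by $\{e_{\al_i}\}_{i\geqslant 0}$ with $\ve_0$-component shifted by $+1$ and $\ve_1$-component unchanged, so Lemma \ref{lemma3.8} applies at both $v_\la$ and $f_{\al_{-1}}v_\la$ and the two $e_{\al_0}$-contributions cancel against the chosen Cartan prefactors (using $f_{\al_{-1}}f_{2\ve_1}^{(P)}=f_{2\ve_1}^{(P)}f_{\al_{-1}}$). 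A bonus of your route is that it actually proves the ``furthermore'' identity via $[a+1]_q[b+1]_q-[a]_q[b]_q=[a+b+1]_q$, which the paper's printed proof does not address. Two small points: for $j=1$ the claim that $e_{\al_j}$ commutes with the Cartan prefactors is not literally true (since $(\al_1,\ve_0+\ve_1)\neq 0$), but it is harmless because the prefactors are evaluated to scalars on $v_\la$ before any commutation; and your uniqueness step is only asserted as ``the constraints cut out a one-dimensional subspace'' --- to close it you should spell out the same reduction the paper uses, namely expand a general solution over the $f_{\al_{-1}}$-inserted spanning set, invoke the uniqueness statement of Lemma \ref{lemma3.8} on the $e_{\al_m}$-constraints, $m\geqslant 1$, to land in the two-parameter family $C^1 f_{\al_{-1}}f_{\ve_0+\ve_1}^{(P)}v_\la-C^2 f_{\ve_0+\ve_1}^{(P)}f_{\al_{-1}}v_\la$, and then check that the $e_{\al_0}$-equations form a consistent rank-one system fixing the ratio $C^1:C^2$.
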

\begin{proof}
We are looking for the generating coefficient in the form
\be
y
&= \sum_{m,k} (A_{m}^{k1} f_{\al_{-1}} \varphi_{m}^{k} - A_{m}^{k2} \varphi_{m}^{k} f_{\al_{-1}}),
\ee
where $(A^{k1}_m),(A^{k2}_m)\in \C^{d_P^1}$.
Since $f_{\al_{-1}} {\varphi'}_{m}^{k}$ and ${\varphi'}_{m}^{k} f_{\al_{-1}}$ are independent, the conditions
$e_{\al_m}f_{\ve_{-1}+\ve_{1}}^{(P)}v_\la=0$ for positive $m$
give $A_{m}^{kj}=A_{m}^{k}C^j$ for some scalars $C^j$, $j=1,2$. That is,
$y=C^1f_{\al_{-1}}f_{\ve_{0}+\ve_{1}}^{(P)}v_\la- C^2f_{\ve_{0}+\ve_{1}}^{(P)}f_{\al_{-1}}v_\la$.

The coefficients $C^1$, $C^2$ are found from the condition $e_{\al_0}y=\sum_{m=1}^n E_m f_{\al_{-1}} {\varphi}_{m}=0$, where
$E_m$ are equal to
$$
\left( A_{m}^{1} \left[\la_{0}-\la_1+ 2\right]_{q} + A_{m}^{2} \left[\la_{0}-\la_1 + 1\right]_{q}
	+ A_{m}^{3} \left[\la_{0}-\la_1\right]_{q} \right) C^{1} -
$$
$$- \left( A_{m}^{1} \left[\la_{0}-\la_1 + 3\right]_{q} + A_{m}^{2} \left[\la_{0} -\la_1+ 2\right]_{q}
	+ A_{m}^{3} \left[\la_{0}-\la_1+1\right]_{q} \right) C^{2}.
$$
This boils down to $m$ equations $E_m=0$ on $C^i$. One can check that system is consistent and $C^1=[\la_{0}+\la_{1}+P+1]_{q}$, $C^2=[\la_{0}+\la_{1}+P]_{q}$, up to a common scalar
factor.
Thus, $y=f_{\ve_{-1}+\ve_{1}}^{(P)}v_\la$ is a generating coefficient.
\end{proof}
\subsection{Generating coefficients for arbitrary $l\geqslant 0$}

Now we return to the usual enumeration of simple roots, $\al_1,\ldots, \al_n$.
The algebra $\g=\s\o(2l+2+P)$ includes the subalgebra $\s\o(6+P)$ via
the assignment $\al_i\mapsto \al_{l+i}$, i.e.
$$\al_{-1}\mapsto \al_{l-1}, \quad \al_{0}\mapsto \al_{l},\quad \ldots, \quad \al_{p+1}\mapsto \al_{l+p+1}=\al_{n}.$$
Under this embedding, $f_{\ve_{l+1}+\ve_{l+2-i}}^{(P)}(\la)$, $i=1,2,3$, become elements of $U_q(\g_-)$ of weights $-\ve_{l+2-i}-\ve_{l+1}$. The subalgebra  $\s\o(6+P)$ corresponds to already considered case $l=2$

Define an element $f_{\ve_{l-1}+\ve_{l+1}}^{(P)}\in U_q(\b_-)$ by setting
\be
f_{\ve_{l-1}+\ve_{l+1}}^{(P)}
&= f_{\al_{l-1}}f_{\ve_{l}+\ve_{l+1}}^{(P)}[h_{\ve_l}+h_{\ve_{l+1}}+P+1]_{q} - f_{\ve_{l}+\ve_{l+1}}^{(P)}f_{\al_{l-1}}[h_{\ve_l}+h_{\ve_{l+1}}+P]_{q} ,
\ee
so that $f_{\ve_{l-1}+\ve_{l+1}}^{(P)}(\la)$ is indeed the evaluation of $f_{\ve_{l-1}+\ve_{l+1}}^{(P)}$ at the point $\la\in \h^*$.
Observe that
$$
e_{\al_{k}}f_{\ve_{k}+\ve_{l+1}}^{(P)}v_\la=[\la_{k} +\la_{l+1}+P+l-k]_{q}f_{\ve_{k+1}+\ve_{l+1}}^{(P)}v_\la,
$$
once  $k=l-1,l$.
Suppose we have defined $f_{\ve_{k+1}+\ve_{l+1}}$ for some $k\in [1,l-1]$. Then put
\be
f_{\ve_{k}+\ve_{l+1}}^{(P)}
&=& f_{\al_{k}}f_{\ve_{k+1}+\ve_{l+1}}^{(P)}[h_{\ve_{k+1}}+h_{\ve_{l+1}}+P+l-k]_{q}
\nn\\
&-&
f_{\ve_{k+1}+\ve_{l+1}}^{(P)}f_{\al_{k}}[h_{\ve_{k+1}}+h_{\ve_{l+1}}+P+l-k-1]_{q}. \nn
\ee
\begin{propn}
The vectors $f_{\ve_{k}+\ve_{l+1}}^{(P)}v_\la\in M_\la$ satisfy  the equations
\be
e_{\al_j}f_{\ve_{k}+\ve_{l+1}}^{(P)}v_\la&=&\dt_{jk}[\la_{k}+\la_{l+1}+P+l-k]_{q}f_{\ve_{k+1}+\ve_{l+1}}^{(P)}v_\la, \quad k=1,\ldots, l,
\nn\\
e_{\al_{j}}f_{2\ve_{l+1}}^{(P)}v_\la&=&\dt_{j\> l+1}[\la_{l+1}+\frac{P}{2}-1]_{q}f_{\ve_{l+2}+\ve_{l+1}}^{(P)}v_\la,
\ee
where $f_{\ve_{l+2}+\ve_{l+1}}^{(P)}=\phi'_1$.
Then $f_{\ve_{1}+\ve_{l+1}}^{(P)}v_\la$ is a unique generating coefficient of the singular vector in $\C^N\tp M_\la$
of weight $\la-\ve_1-\ve_{l+1}$.
\end{propn}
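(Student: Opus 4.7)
I plan to prove both identities by descending induction on $k$, from $k=l+1$ down to $k=1$, exploiting the recursive definition of $f_{\ve_k+\ve_{l+1}}^{(P)}$. The base case $k=l+1$ comes from Section \ref{Sec_l=0} (after the embedding $\al_i\mapsto\al_{l+i}$ of $\s\o(P+2)$ into $\s\o(N)$): the equality $e_{\al_{l+1}}f_{2\ve_{l+1}}^{(P)}v_\la=[\la_{l+1}+\frac{P}{2}-1]_q\phi'_1 v_\la$ is extracted from the explicit formula for $B_1$ in the proof of Lemma \ref{Einjectve}, while annihilation by $e_{\al_j}$ for $j>l+1$ is built into the construction of the symmetric generating coefficient. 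The cases $k=l$ and $k=l-1$ are already covered by Lemma \ref{lemma3.8} and Section \ref{Sec_l=2}, respectively.

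For the inductive step from $k$ to $k-1$ with $k\geqslant 2$, I apply $e_{\al_j}$ to the defining recursion for $f_{\ve_{k-1}+\ve_{l+1}}^{(P)}$ and split by the value of $j$. For $j<k-1$, every Chevalley and Cartan factor in sight commutes with $e_{\al_j}$, which then annihilates $v_\la$. For $j>k$, the generator $e_{\al_j}$ commutes with $f_{\al_{k-1}}$ and is killed on $f_{\ve_k+\ve_{l+1}}^{(P)}v_\la$ by the inductive hypothesis; the Cartan factors contribute only scalars that multiply zero. The two substantive cases are $j=k-1$ and $j=k$. In the former, the commutator $[e_{\al_{k-1}},f_{\al_{k-1}}]=(t_{\al_{k-1}}-t_{\al_{k-1}}^{-1})/(q-q^{-1})$ contributes on each summand; after evaluation on $v_\la$, the two Cartan expressions combine through the quantum identity $[a+1]_q[b]_q-[a]_q[b-1]_q=[a+b]_q$ into the coefficient $[\la_{k-1}+\la_{l+1}+P+l-k+1]_q$ as required. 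In the latter, the inductive hypothesis on both summands produces a common factor $[\la_k+\la_{l+1}+P+l-k]_q f_{\ve_{k+1}+\ve_{l+1}}^{(P)}v_\la$, multiplied by two Cartan factors whose difference precisely offsets the weight shift caused by $f_{\al_{k-1}}$ in the second summand, so that the two contributions cancel.

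Once all identities are proved, the vector $y=f_{\ve_1+\ve_{l+1}}^{(P)}v_\la$ is annihilated by $e_{\al_j}$ for $j\geqslant 2$ and by $e_{\al_1}^2$ (since a single $e_{\al_1}$ sends it to a multiple of $f_{\ve_2+\ve_{l+1}}^{(P)}v_\la$, which is in turn annihilated by $e_{\al_1}$); Corollary \ref{generating_coeff} then identifies it as a generating coefficient of a singular vector of weight $\la-\ve_{l+1}$. Non-vanishing for generic $\la$ follows because repeated application of the $e_{\al_k}$ chain terminates in $\phi'_1 v_\la\neq 0$ (Lemma \ref{phi'}) multiplied by $l+1$ generically non-zero quantum integers. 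Uniqueness up to a scalar is immediate from Lemma \ref{dual-contragredient}, since the right dual of $\C^N$ is cyclically generated over $U_q(\g_+)$ by its lowest weight line, so the corresponding $U_q(\g_+)$-homomorphism into $M_\la$ is determined by the image of one generator.

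The technical heart of the argument is the $j=k$ cancellation in the inductive step. After commuting $e_{\al_k}$ past $f_{\al_{k-1}}$ and invoking the inductive hypothesis, one must verify that the Cartan factors $[h_{\ve_k}+h_{\ve_{l+1}}+P+l-k+1]_q$ and $[h_{\ve_k}+h_{\ve_{l+1}}+P+l-k]_q$, evaluated at the weights appropriate to each summand, yield equal contributions with opposite signs. This dynamical matching is exactly what guarantees the recursion propagates a singular vector, and it generalizes the Verma module identity $e_{\al_k}f_{\ve_i}v_\la=\dt_{ki}[\la_i+n-i]f_{\ve_{i+1}}v_\la$ recalled earlier in the text.
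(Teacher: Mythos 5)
Your induction is the same route the paper takes: the same base cases $k=l+1,l,l-1$ imported from Sections \ref{Sec_l=0}--\ref{Sec_l=2}, the same split over $j$, and the quantum identity $[a+1]_q[b]_q-[a]_q[b-1]_q=[a+b]_q$ is indeed what produces the coefficient in the $j=k-1$ case and the cancellation in the $j=k$ case. One bookkeeping point: in the second summand of the recursion, $e_{\al_j}$ reaches the vector $f_{\ve_{k}+\ve_{l+1}}^{(P)}f_{\al_{k-1}}v_\la$, not $f_{\ve_{k}+\ve_{l+1}}^{(P)}v_\la$, so the inductive hypothesis as literally stated (an identity of vectors built from $v_\la$) does not apply; you need it in the stronger form of a relation in the algebra modulo the left ideal generated by $e_{\al_m}$, $m\geqslant k$ (equivalently, valid on any vector killed by those $e_{\al_m}$, such as $f_{\al_{k-1}}v_\la$, whose $k$-th weight component is shifted by $1$ --- which is precisely why the two Cartan factors compensate). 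The paper signals this by phrasing the key step as commutation of $f_{\ve_{k+1}+\ve_{l+1}}^{(P)}$ with $e_{\al_k}$ modulo a left ideal. This is fixable, and with it the computational core of your argument is correct.

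The genuine gaps are in your last paragraph. First, uniqueness: Lemma \ref{dual-contragredient} shows a singular vector is determined by its generating coefficient $y$, but it does not show that the space of admissible $y$ of weight $\la-\ve_1-\ve_{l+1}$ (solutions of $e_{\al_1}^2y=0$, $E'y=0$ from Corollary \ref{generating_coeff}) is one-dimensional; establishing exactly that is what required the explicit bases $\phi_m$, $\phi^i_m$ and the injectivity computations of Sections \ref{Sec_l=0}--\ref{Sec_l=1}, and your sketch offers no substitute at general $l$, so "uniqueness is immediate" is not justified. Second, non-vanishing: your chain argument (applying $e_{\al_1},\ldots,e_{\al_{l+1}}$ and landing on $\phi'_1v_\la\neq0$ via Lemma \ref{phi'}) proves $f_{\ve_1+\ve_{l+1}}^{(P)}v_\la\neq0$ only when all the brackets $[\la_i+\la_{l+1}+P+l-i]_q$ and $[\la_{l+1}+\frac{P}{2}-1]_q$ are nonzero, i.e.\ for generic $\la$. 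But the entire purpose of the paper is the non-generic weights $\la\in\mathfrak{C}^*_{\l,'}$ subject to (\ref{-1eig}), so a bare "generic $\la$" statement does not deliver what the proposition is used for; the paper instead proves non-vanishing for \emph{all} $\la$ by the modified-commutator argument of \cite{M_Shap} (Lemma 9.1, Corollary 9.2), using $(\al_k,\ve_{k+1}+\ve_{l+1})\neq0$. To close this you must either reproduce that argument or at least check explicitly that your chain's brackets stay nonzero on the set $\mathfrak{C}^*_{\l,reg'}$ actually needed in Section 4.
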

\begin{proof}
The case of $k=l-1,l,l+1$ has been worked out in Sections \ref{Sec_l=0}-\ref{Sec_l=2}. We suppose that the statement is proved for some $k+1\leqslant l+1$
and prove it for $k$.
Clearly $e_{\al_j}f_{\ve_{k}+\ve_{l+1}}^{(P)}v_\la=0$ for $j>k+1$ by the induction assumption and $j<k$ by construction. The element $f_{\ve_{k+1}+\ve_{l+1}}^{(P)}$ of weight
$-\ve_{k+1}-\ve_{l+1}$ commutes with $e_{\al_{k}}$ modulo $U_q(\b^-)e_{\al_{k-1}}$, which readily implies the formula for $j=k$.
Then the remaining equality $e_{\al_{k+1}}f_{\ve_{k}+\ve_{l+1}}^{(P)}v_\la=0$ easily follows from the induction assumption
$$
e_{\al_{k+1}}f_{\ve_{k+1}+\ve_{l+1}}^{(P)}=[\la_{k+1}+\la_{l+1}+P+l-k-1]_{q}f_{\ve_{k+2}+\ve_{l+1}}^{(P)}v_\la.
$$

Finally, we argue that $f_{\ve_{1}+\ve_{l+1}}^{(P)}v_\la$ does not turn zero for all $\la$. We showed in Sections \ref{Sec_l=0}--\ref{Sec_l=2} that $f_{\ve_{k}+\ve_{l+1}}^{(P)}v_\la\not=0$ for $k=l,l+1,l+2$.
Assuming it is true for all $k\leqslant l$, observe that $f_{\ve_{k}+\ve_{l+1}}^{(P)}$ is a "modified commutator" of $f_{\ve_{k+1}+\ve_{l+1}}^{(P)}$
with $f_{\al_{k}}$ and that $(\al_{k},\ve_{k+1}+\ve_{l+1})\not = 0$.
Further arguments are based on \cite{M_Shap}, Lemma 9.1, and are similar to the proof of Corollary 9.2 therein.
\end{proof}

Next we determine the principal terms of the generating coefficients.
This will be of importance for our further analysis. Observe that
\be
f_{2\ve_{l+1}}^{(P)}v_\la&=&[\frac{P}{2}]_q\psi^{l+1,N-l}v_\la+\ldots,
\nn\\
f_{\ve_{l}+\ve_{l+1}}^{(P)}v_\la&=& [\la_{l+1}+P-1]_{q}[\la_{l+1}+\frac{P}{2}]_{q}\psi^{l,N-l}v_\la+\ldots,
\nn\\
f_{\ve_{m}+\ve_{l+1}}^{(P)}v_\la&=& [\la_{l+1}+P-1]_{q}[\la_{l+1}+\frac{P}{2}]_{q} \prod_{i=m+1}^{l} [\la_{i}+\la_{l+1}+P+l-i+1]_{q} \psi^{m,N-l}v_\la+\ldots,
\nn
\ee
where $m<l$. The omitted terms contain only non-principal monomials.

Now we can express the principal terms of the coefficients $y_i=y_{N-l,i}$ of the singular vector $u_{N-l}$. Introduce scalar coefficients
$c'_i$ via the equality $y_i=c'_i\psi^{i,N-l}v_\la+\ldots$, where the omitted terms do not contain $\psi^{i,N-l}v_\la$. Note that we have exact equality
$y_{i}=c'_{i}\psi^{i,N-l}v_\la$ for $i=l+2,\ldots, N-l$.
Formula (\ref{y_i conat_fact}) can be rewritten as
$$
y_j=(-1)^{\eps_i+1}q^{(\ve_i-\ve_j,\ve_i)}e_{\ve_j-\ve_i}y_i=(-1)^{\eps_i+1}q^{(\ve_i,\ve_i)}e_{\ve_j-\ve_i}y_i
$$
for all $i,j\in [1,N]$ such that $\ve_j-\ve_i\in \Pi^+$. Then
$$
c'_{m}=(-q)^{m-1}[\la_{l+1}+P-1]_{q}[\la_{l+1}+\frac{P}{2}]_{q} \prod_{i=1}^{m-1}[\la_{i}+\la_{l+1}+P+l-i]_{q}\prod_{i=m+1}^{l} [\la_{i}+\la_{l+1}+P+l-i+1]_{q},
$$
$$
c'_{l+1}=(-q)^{l}[\frac{P}{2}]_{q}  \prod_{i=1}^{l}[\la_{i}+\la_{l+1}+P+l-i]_{q},
$$
$$
c'_{l+2}=(-q)^{l+1}[\la_{l+1}+\frac{P}{2}-1]_{q}  \prod_{i=1}^{l}[\la_{i}+\la_{l+1}+P+l-i]_{q},
$$
where $m=1,\ldots, l$. Assuming $\g=\s\o(2n+1)$, we continue as
\be
\begin{array}{rrlr}
c'_{l+2+k}&=&(-q)^kc'_{l+2}, & k=1,\ldots p,
\\
c'_{n+1+k}&=&(-q)^{n-l-1}q^{k-1}c'_{l+2}, &  k=1,\ldots p,
\\
c'_{n+2+p}&=&(-q)^{n-l-1}q^{p}c'_{l+2}[\la_{l+1}]_q.
\end{array}
\label{c'_2n+1}
\ee
For $\g=\s\o(2n)$, we have
\be
\begin{array}{rrll}
c'_{l+2+k}&=&(-q)^kc'_{l+2}, & k=1,\ldots p-1,
\\
c'_{n+1+k}&=&(-q)^{n-l-2}q^kc'_{l+2}, & k=0,\ldots p,
\\
c'_{n+2+p}&=&(-q)^{n-l-2}q^{p+1}c'_{l+2}[\la_{l+1}]_q.
\end{array}
\label{c'_2n}
\ee
We use these formulas in the next section.

\section{Minimal polynomial for $\Q$.}
In this section we deal with two Levi subalgebras, $\l$ and $\hat \l=\h+\s\o(P)\subset \l$. All objects related to $\hat \l$ will be marked with hat. In particular, $\hat M_\la$ is a parabolic Verma module
induced from $U_q(\hat\l+\g_+)$, while $M_\la$ stands for the one
induced from $U_q(\l+\g_+)$,

Given  a weight $\la\in \mathfrak{C}^*_{\hat \l}$ define  $\hat V_i\subset \C^N\tp \hat M_\la$ to be the submodule generated by $\{w_k\tp v_\la\}_{k=1}^i$.
The sequence $\{0\}=\hat V_0\subset  \hat V_1\subset \ldots \subset\hat  V_N $ forms a filtration, $\hat V_\bullet$, of
$\C^N\tp \hat M_\la$. Its graded component $\gr \hat V_j= \hat V_{j}/ \hat V_{j-1}$ is generated by (the image of) $w_j\tp v_\la$;

Now assume that $\la\in \mathfrak{C}^*_{\l}\subset\mathfrak{C}^*_{\hat \l}$. Recall that $\{w_{m_i}\}_{i=1}^{2\ell+3}$ are the highest weight vectors of the irreducible $\l$-blocks in (\ref{l-decomp}).
Since $\Span\{w_{k}\}_{k=m_i}^{m_i-1}=U_q(\l)w_{m_i}$, the image of $\hat V_{k}$ under projection $\C^n\tp  \hat M_\la\to \C^n\tp  M_\la$
coincides with the image $V_{m_i}$  of $\hat V_{m_i}$ for all $k=m_i,\ldots, m_i-1$.
The sequence $\{0\}=V_0\subset  V_1\subset \ldots \subset  V_N $
forms a filtration $V_\bullet$ of $\C^N\tp  M_\la$ with the graded
 module
\be
\gr V_\bullet=(\oplus_{i=1}^{\ell}\gr  V_{i})\oplus\gr V_{l+1} \oplus  \gr V_{l+2}
\oplus \gr V_{N-l}\oplus (\oplus_{i=1}^\ell\gr  V_{{\ell+3+i}}).
\label{gr_V}
\ee
The graded components $\gr V_i= V_{i}/ V_{i-1}$ are labeled with irreducible $\l$-submodules of (\ref{l-decomp}), and
generated by the images of $w_{m_i}\tp v_\la$ carrying the highest weight $\la+\ve_{m_i}$.
\begin{propn}
\label{levi_filtration}
As a filtration of $U_q(\g_-)$-modules, $V_\bullet$ is independent of $\la\in \mathfrak{C}^*_{\l}$.
\end{propn}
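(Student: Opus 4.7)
The plan is to transport the filtration to a $\la$-independent ambient space via the canonical $U_q(\g_-)$-isomorphism $M_\la\simeq U_\l^-$, which sends $fv_\la\mapsto f+U_q(\g_-)\l_-$ and whose definition does not involve $\la$.

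First I would verify that the $U_q(\g)$-submodule $\hat V_i\subset \C^N\tp \hat M_\la$ actually coincides with the $U_q(\g_-)$-submodule generated by the set $\{w_k\tp v_\la\}_{k=1}^i$. This uses only the parabolic induction condition $e_\al v_\la=0$, which via the coproduct gives
\[
e_\al(w_k\tp v_\la)=q^{(\al,\la)}\,e_\al w_k\tp v_\la.
\]
In the natural representation $e_\al w_k$ is a scalar multiple of some $w_j$ with $j<k$, so $U_q(\g_+)$ permutes the generators within the set (up to $\la$-dependent scalars), the Cartan merely rescales them, and only $U_q(\g_-)$ genuinely enlarges the span. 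Applying the $U_q(\g)$-equivariant projection $\pi\colon \hat M_\la\to M_\la$, one concludes that $V_i=(\id\tp\pi)(\hat V_i)$ is the $U_q(\g_-)$-submodule of $\C^N\tp M_\la$ generated by $\{w_k\tp v_\la\}_{k=1}^i$.

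Now the canonical $U_q(\g_-)$-isomorphism $M_\la\simeq U_\l^-$ induces, via $\id\tp(\cdot)$ and the coproduct action, a $U_q(\g_-)$-isomorphism $\Psi_\la\colon \C^N\tp M_\la\to \C^N\tp U_\l^-$ carrying each $w_k\tp v_\la$ to $w_k\tp 1$. Under $\Psi_\la$, the filtration $V_\bullet$ corresponds to the filtration of $\C^N\tp U_\l^-$ whose $i$-th term is the $U_q(\g_-)$-submodule generated by $\{w_k\tp 1\}_{k=1}^i$, and this latter filtration is manifestly independent of $\la$. Transporting back, one reads off the assertion of the proposition.

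The only delicate point is the reduction from $U_q(\g)$- to $U_q(\g_-)$-generation in the first step; everything after that is a transport of structure along the $\la$-independent identification $M_\la\simeq U_\l^-$.
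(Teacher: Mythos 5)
Your first step is sound: since the span of $\{w_k\tp v_\la\}_{k\leqslant i}$ is stable under $U_q(\h)$ and under each $e_\al$ (the scalar $q^{(\al,\la)}$ is harmless), the triangular decomposition does reduce $U_q(\g)$-generation to $U_q(\g_-)$-generation, and this survives the projection $\C^N\tp\hat M_\la\to\C^N\tp M_\la$. The gap is in the transport step, which is exactly where the content of the proposition sits. Your claim that $\Psi_\la=\id\tp\phi_\la$ is a $U_q(\g_-)$-isomorphism onto a $\la$-independent structure depends on which tensor leg of $\Delta(f_\al)$ carries the Cartan factor. Your displayed formula $e_\al(w_k\tp v_\la)=q^{(\al,\la)}e_\al w_k\tp v_\la$ presupposes $\Delta(e_\al)=e_\al\tp q^{h_\al}+1\tp e_\al$, hence $\Delta(f_\al)=f_\al\tp 1+q^{-h_\al}\tp f_\al$, and then indeed no Cartan element ever acts on the Verma leg and the transported action is $\la$-free. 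But the paper works with the opposite normalization: in the proof of Lemma \ref{dual-contragredient} the term $q^{(\al,\ve_i)}w_i\tp e_\al y_i$ shows $\Delta(e_\al)=e_\al\tp 1+q^{h_\al}\tp e_\al$, hence $\Delta(f_\al)=f_\al\tp q^{-h_\al}+1\tp f_\al$, and the Cartan factor hits $M_\la$, producing $\la$-dependent scalars. With that coproduct your $\Psi_\la$ does not intertwine with any fixed action, and the image of $V_\bullet$ genuinely moves with $\la$: already for $U_q(\s\l(2))$ the submodule $V_1\subset\C^2\tp M_\la$ contains $q^{\la_2-\la_1}w_2\tp v_\la+w_1\tp f v_\la$, whose image under $\id\tp\phi_\la$ is a $\la$-dependent line; the factor $q^{\la_j-\la_i}$ in Proposition \ref{prop_princ_diag} records precisely this phenomenon, so "manifestly independent of $\la$" is exactly what fails for your map.

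The repair is not long, but it is a missing idea rather than a formality. In the paper's convention the transported $U_q(\g_-)$-action on $\C^N\tp U_\l^-$ is conjugate to the $\la$-free one by the diagonal operator acting on the first factor by $q^{(\la,\ve_j)}$ on $w_j$; composing $\id\tp\phi_\la$ with the inverse of this operator gives a genuine $U_q(\g_-)$-isomorphism onto a fixed module which sends each $w_k\tp v_\la$ to a nonzero multiple of $w_k\tp 1$, and only then does the filtration get matched with a $\la$-independent one (note also that the proposition must be read this way -- as independence up to an isomorphism of filtered $U_q(\g_-)$-modules -- not as literal equality of subspaces under $\id\tp\phi_\la$). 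Equivalently, one can argue through the induction (tensor) identity $\C^N\tp M_\la\simeq U_q(\g)\tp_{U_q(\p^+)}(\C^N\tp\C_\la)$, where $V_\bullet$ comes from the $U_q(\p^+)$-filtration of $\C^N\tp\C_\la$ and the $U_q(\g_-)$-structure visibly does not see $\la$; this is in the spirit of the cited source \cite{AM}, to which the paper defers instead of giving its own proof. So: right strategy, correct first step, but the pivotal assertion about $\Psi_\la$ is unproved and, in the paper's normalization, false as stated without the diagonal correction.
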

\noindent For a proof, see e.g. \cite{AM}.

For generic $\la\in \mathfrak{C}^*_{\l}$, the graded component $\gr V_{m_i}$ is a parabolic Verma module induced from  $U_q(\l)w_{m_i}\subset \C^N$, hence that is true for all $\la$. The operator $\Q$ is scalar on each 
$\gr V_{m_i}$,
which is a cyclic module of highest weight $\la+\ve_{m_i}$.
Therefore (\ref{gr_V}) determines the spectrum of $\Q$ and a polynomial equation on $\Q$. For generic $\la$ this polynomial
is minimal, but may not be so for special values of $\la$. In particular, we are interested in $\la\in \mathfrak{C}^*_{\l,reg'}$.

Suppose that $i\preceq j$ and fix a path from $i$ to $j$ on the Hasse diagram.
We define $\stackrel{\curvearrowright}{\sum}_{m=i}^{\lower6pt\hbox{$\scriptstyle{j}$}}$ as summation   over all nodes $m$ of that path. We shall use it only when it is path-independent.
\begin{propn}[\cite{AM2}]
Suppose that $i,j\in [1,N]$ are such that $i\prec j$.
Then
\be
w_i\tp \psi^{ij}v_\la
&=&(-1)^{j-i+\stackrel{\curvearrowright}{\sum}_{k=i}^{\lower6pt\hbox{$\scriptscriptstyle{j-1}$}} \eps_k}
q^{(\ve_{j}-\ve_i,\ve_j)-\stackrel{\curvearrowright}{\sum}_{k=i+1}^{\lower6pt\hbox{$\scriptscriptstyle{j}$}}(\ve_{k},\ve_{k})} q^{\la_{j}-\la_i}w_{j}\tp v_\la
 \mod  V_{j-1}.
\label{princ_diag}
\ee
If $\psi$ is a Chevalley monomial of weight $\ve_j-\ve_i$ and $\psi \not =\psi^{ij}$, then $w_i\tp \psi v_\la \in  V_{j-1}$.
\label{prop_princ_diag}
\end{propn}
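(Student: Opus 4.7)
The plan is a two-step reduction of the proposition: first to a scalar computation per Chevalley monomial $\psi$ of weight $\ve_j-\ve_i$, then an inductive evaluation of the scalar.

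\textbf{Weight reduction.} Each $V_k/V_{k-1}$ is a cyclic $U_q(\g)$-module generated by the image of $w_k\tp v_\la$ of highest weight $\la+\ve_k$, so its weights lie in $\la+\ve_k-\sum_{\al\in\Pi^+}\Z_{\geqslant 0}\al$. Consequently the weight $\la+\ve_j$ can appear in $V_k/V_{k-1}$ only for $k\preceq j$, and in $V_j/V_{j-1}$ it spans a one-dimensional subspace. Hence any weight-$(\la+\ve_j)$ vector of $\C^N\tp M_\la$ lies in $V_j$ and is congruent modulo $V_{j-1}$ to a scalar multiple of $w_j\tp v_\la$. For any Chevalley monomial $\psi$ of weight $\ve_j-\ve_i$ this defines a scalar $c_\psi$ via $w_i\tp\psi v_\la\equiv c_\psi\, w_j\tp v_\la\mod V_{j-1}$; both halves of the proposition are statements about this scalar.

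\textbf{Principal case.} I compute $c_{\psi^{ij}}$ by induction on the length of the chosen path from $i$ to $j$. Writing $\psi^{ij}=f_{\beta_1}\psi^{i',j}$ for the simple root $\beta_1$ of the first edge $i\to i'$, one has $\Delta(\psi^{ij})(w_i\tp v_\la)\in V_i\subset V_{j-1}$. Expanding $\Delta(f_\al)=1\tp f_\al+f_\al\tp K_\al^{-1}$ on each of the $m$ factors produces $2^m$ summands indexed by subsets $S\subset\{1,\ldots,m\}$; most of them vanish because the rightmost factor of $\psi^{ij}$ has no target at $w_i$ in the natural representation. Only $S=\emptyset$ (giving $w_i\tp\psi^{ij}v_\la$) and $S=\{1\}$ (giving a scalar multiple of $w_{i'}\tp\psi^{i',j}v_\la$) survive. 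The scalar collected from commuting the $K_\beta^{-1}$'s past the $f$'s and applying them to $v_\la$ is $(-1)^{\eps_i}q^{-(\beta_1,\la+\wt(\psi^{i',j}))}$. Setting the total to zero modulo $V_{j-1}$ and substituting the inductive value of $c_{i',j}$ yields a one-step recursion for $c_{ij}$, whose iteration along the path reproduces the sign $(-1)^{j-i+\stackrel{\curvearrowright}{\sum}\eps_k}$, the $q$-exponent $(\ve_j-\ve_i,\ve_j)-\stackrel{\curvearrowright}{\sum}(\ve_k,\ve_k)$, and the dependence $q^{\la_j-\la_i}$ by telescoping.

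\textbf{Non-principal case.} For a non-principal Chevalley monomial $\psi$ the same coproduct expansion produces additional nonzero summands of the form $(\mbox{scalar})\, w_{i''}\tp\psi_{S,2}v_\la$ with $\psi_{S,2}$ a Chevalley monomial of weight $\ve_j-\ve_{i''}$ of strictly smaller length. By the inductive hypothesis, those with $\psi_{S,2}$ non-principal project to $0$ mod $V_{j-1}$, while those with $\psi_{S,2}=\psi^{i'',j}$ contribute according to the formula of the previous step. The constraint $\Delta(\psi)(w_i\tp v_\la)\in V_i\subset V_{j-1}$ then expresses $c_\psi$ as a linear combination of $c_{i'',j}$'s and coproduct scalars, which after substitution telescopes identically to zero.

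The main obstacle is this last telescoping cancellation: its verification is a nontrivial combinatorial identity relating the $q$-exponents extracted from the coproduct to the recursive values of $c_{i'',j}$. In the chain portion of the Hasse diagram it reduces to elementary manipulations. At the branching $w_{n-1}\leftrightarrow w_{n+2}$ in $\s\o(2n)$ one must additionally invoke the commutativity $f_{\al_{n-1}}f_{\al_n}=f_{\al_n}f_{\al_{n-1}}$ (orthogonality of the two simple roots), which is what makes $\psi^{n-1,n+2}$ well-defined on either path and ensures the sums $\stackrel{\curvearrowright}{\sum}$ appearing in the formula are path-independent in the cases where it is applied.
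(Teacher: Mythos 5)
You should first note that the paper itself does not prove Proposition \ref{prop_princ_diag}; it is quoted from \cite{AM2} (in preparation), so your argument has to stand on its own. Your weight-filtration reduction is sound, and the one-step recursion you extract does reproduce the paper's local formula (\ref{principal_diag}). The first genuine gap is the claim that in the principal case only the summands $S=\emptyset$ and $S=\{1\}$ survive. That is false whenever the path from $i$ to $j$ contains a second edge whose label acts nontrivially on $w_i$: on the Hasse diagram each simple root $\al_k$, $k<n$, labels two edges, $w_k\to w_{k+1}$ and its mirror $w_{N-k}\to w_{N-k+1}$ (and $\al_n$ labels two middle edges; for $\s\o(2n)$ both $\al_{n-1}$ and $\al_n$ act on $w_{n-1}$). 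So for $i=k$ and $j\geqslant N+1-k$ the subset picking the later occurrence of $\al_k$ — and many larger subsets — gives a nonzero first tensor factor; your stated reason for vanishing ("the rightmost factor has no target at $w_i$") fails exactly there, since $\beta_m=\beta_1$ when $j=N+1-i$. These surviving terms have the form $w_{i''}\tp(\mbox{non-principal monomial})\,v_\la$, so they can only be discarded by invoking the second half of the proposition for shorter monomials. Hence the two assertions must be proved by a single simultaneous induction on the length of the monomial; as written, your principal case silently uses the non-principal case without setting that induction up.

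The second, larger gap is that the substance of the non-principal statement is not proved: after the coproduct expansion one must show that the resulting relation forces $c_\psi=0$ for every non-principal $\psi$, i.e.\ that all contributions of principal complements cancel. You flag this "telescoping cancellation" as the main obstacle and do not carry it out, but this is precisely where the content of the proposition lies (it is where the Serre relations and the commutation $f_{\al_{n-1}}f_{\al_n}=f_{\al_n}f_{\al_{n-1}}$ actually enter), so the write-up proves the principal formula only conditionally and the second assertion not at all. A smaller point to check: iterating (\ref{principal_diag}) yields the sign $(-1)^{m+\sum\eps}$ with $m$ the length of the path, which at the edge $w_{n-1}\to w_{n+1}$ of $\s\o(2n)$ ($m=1$, $j-i=2$) does not literally agree with the exponent $j-i$ in (\ref{princ_diag}); verify this normalization before asserting that the telescoping "reproduces" the displayed global formula verbatim.
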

\noindent
It is also convenient to use an equivalent local version of formula (\ref{princ_diag}):
\be
w_i\tp \psi^{ij}v_\la =(-1)^{\eps_i+1}q^{\la_k-\la_i+(\ve_k-\ve_i,\ve_j-\ve_k)}w_{k}\tp \psi^{kj}v_\la \mod  V_{j-1}
\label{principal_diag}
\ee
where $\ve_i-\ve_k =\al\in \Pi^+$ is a positive simple root for some $i,k\in [1,N]$,
and $j \succeq k$. Note that $(\ref{princ_diag})$ holds true for $\g=\g\l(n)$ and $\l=\oplus_{i=1}^{\ell+1}\g\l(n_i)$ via the embeddings $U_q(\g\l(n))\subset U_q(\s\o(N))$,  $\C^n\subset \C^{N}$, of algebras and their natural representations.

We consider yet another system of $\U_q(\g)$-submodules and compare it with $\{V_i\}_{i=1}^{l+3}$.
As we mentioned, for generic $\la$ the tensor product $\C^N\tp  M_\la$ decomposes into the direct sum
$$
\C^N\tp  \hat M_\la=\oplus_{i=1}^{2l+3}  \hat M_{i},\quad \la\in \mathfrak{C}^*_{\hat \l},\qquad
\C^N\tp  M_\la=\oplus_{i=1}^{2\ell+3}  M_{i},\quad \la\in \mathfrak{C}^*_{\l},
$$
where $\hat  M_i$ and $M_i$ are generated by singular vector $\hat u_i$ and, respectively, by the projection $u_{m_i}$
of rescaled  $\hat u_{m_i}$ (which otherwise might turn zero). The left decomposition holds if the Shapovalov forms of $\hat M_\la$ and
all $\hat M_i$ are not degenerate; the same is true for the right decomposition.
The operator $\Q$ is scalar multiple on $\hat  M_i$ and $M_i$ with the eigenvalues $\hat x_i$ and, respectively, $x_{i}= \hat x_{m_i}$.
Denote $ W_{i}=\sum_{k=1}^i  M_{k}$.
For generic $\la$,   $M_i$ is the parabolic Verma modules induced from the corresponding irreducible
$\l$-submodule of $\C^N$. Therefore, it is independent of $\la$ regarded as an $U_q(\g_-)$-module.
\begin{propn}
\label{direct_sum_decomposition}
There is an inclusion $ W_{i}\subset  V_{i}$. Further, $ W_{i}= V_{i}$ if and only if $ W_{i}=\oplus_{k=1}^i  M_{k}$.
Consequently, $W_{i}= V_{i}$ if and only if $ W_{k}= V_{k}$ for all $k\leqslant i$.
\end{propn}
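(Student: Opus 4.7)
The plan is to compare the filtrations $W_\bullet$ and $V_\bullet$ via characters, exploiting Proposition~\ref{levi_filtration} (the $\la$-independence of $V_\bullet$ as a $U_q(\g_-)$-filtration under the identification $M_\la\simeq U_\l^-$) together with the fact, noted in the text, that each $M_k$ is the parabolic Verma module induced from the corresponding $\l$-irreducible summand of $\C^N$.

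First I would establish (a). Since $M_k = U_q(\g)u_{m_k}$ and $V_k$ is $U_q(\g)$-stable, it suffices to prove $u_{m_k}\in V_k$. For generic $\la\in\mathfrak{C}^*_{\l}$, the decomposition $\C^N\tp M_\la=\bigoplus_{j=1}^{2\ell+3}M_j$ is direct and matches the filtration via $V_k=\bigoplus_{j\leqslant k}M_j$, so $u_{m_k}\in M_k\subset V_k$ tautologically. By Proposition~\ref{levi_filtration} the subspace $V_k$ is $\la$-independent, and after a suitable normalization $u_{m_k}$ depends polynomially on $\la$; the containment therefore extends to every $\la$ at which $u_{m_k}$ is defined. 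Then $W_i=\sum_{k\leqslant i}M_k\subset V_i$.

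For (b), I would invoke the character chain
\begin{equation*}
\mathrm{char}\,W_i\;\leqslant\;\sum_{k=1}^i\mathrm{char}\,M_k\;=\;\sum_{k=1}^i\mathrm{char}(\gr V_k)\;=\;\mathrm{char}\,V_i,
\end{equation*}
where the first inequality is an equality if and only if the sum $\sum M_k$ is direct, the middle equality uses that both $M_k$ and $\gr V_k=V_k/V_{k-1}$ are the parabolic Verma module of highest weight $\la+\ve_{m_k}$ (the former by its definition as the induced module from the $\l$-irreducible, the latter by the filtration structure), and the final equality is the filtration identity. Combined with $W_i\subset V_i$ from part (a), the condition $W_i=V_i$ is equivalent to equality of characters, hence to directness of $\sum M_k$.

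Assertion (c) then drops out of (b) together with the elementary observation that any sub-sum of a direct sum is itself direct: if $W_i=\bigoplus_{k=1}^iM_k$, then for every $k\leqslant i$ the sub-sum $W_k=\bigoplus_{j=1}^k M_j$ is direct, whence $W_k=V_k$ by (b) applied at level $k$; the reverse implication is immediate. The main obstacle I foresee is in part (a), namely confirming that the rational dependence of $u_{m_k}$ on $\la$ can be renormalized to have no pole at the specific $\la\in\mathfrak{C}^*_{\l}$ of interest, and checking that the identification $M_k\simeq\gr V_k$ of parabolic Verma modules survives specialization so that the middle equality in the character chain remains valid at every relevant $\la$.
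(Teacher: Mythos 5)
Your parts (b) and (c) are sound, granted the facts the paper records immediately before the proposition (that $M_k$, a priori only the cyclic module generated by $u_{m_k}$, has the character of the parabolic Verma induced from the corresponding $\l$-irreducible for all relevant $\la$, and that $\gr V_k$ is that same parabolic Verma for all $\la$ by Proposition \ref{levi_filtration} and the discussion following it). Your route — a weight-space dimension count to show that, inside $V_i$, the equality $W_i=V_i$ is equivalent to directness of $\sum_k M_k$, plus the observation that a sub-sum of a direct sum is direct to get the last claim — is a legitimate alternative to the paper's argument, which instead factorizes the projection $\pi_k\colon M_k\to\gr V_k$ through $M_k/(W_{k-1}\cap M_k)\simeq W_k/W_{k-1}$ and runs an ascending and a descending induction; your version of (c) is arguably tidier. (Small inaccuracy: $M_k$ is not the induced module "by definition"; that identification at every $\la\in\mathfrak{C}^*_{\l}$ is exactly the $\la$-independence statement you correctly flag as needing to survive specialization.)

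The genuine gap is part (a). Your justification of $u_{m_k}\in V_{m_k}$ at generic $\la$ — that the direct sum decomposition "matches the filtration via $V_k=\oplus_{j\leqslant k}M_j$, so the containment is tautological" — assumes precisely the compatibility that is to be proved: it is a stronger, generic-$\la$ form of the proposition itself, and nothing in the preceding text supplies it. It is also not a formality about weights: $u_{m_k}=\sum_i w_i\tp y_i$ involves only $w_i$ with $i\leqslant m_k$, but $V_j$ does not contain $\Span\{w_i\}_{i\leqslant j}\tp M_\la$ in general. For instance, if $\beta\notin\Pi_\l$ is the simple root with $f_\beta w_j=w_{j+1}$, then $f_\beta(w_j\tp v_\la)=w_{j+1}\tp v_\la+q^{c}\,w_j\tp f_\beta v_\la$, so $w_j\tp f_\beta v_\la\in V_j$ would force $w_{j+1}\tp v_\la\in V_j$, which fails whenever $\gr V_{j+1}\neq 0$; hence knowing which $w_i$ occur in $u_{m_k}$ is not enough, and one must control the coefficients $y_i$. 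The paper does this via Proposition \ref{prop_princ_diag}: each $y_i$ is a combination of Chevalley monomials of weight $\ve_{m_k}-\ve_i$ applied to $v_\la$, the non-principal monomials contribute elements of $V_{m_k-1}$, and the principal monomial contributes a multiple of $w_{m_k}\tp v_\la$ modulo $V_{m_k-1}$; this gives $u_{m_k}\in V_{m_k}$, and hence $W_i\subset V_i$, for every $\la\in\mathfrak{C}^*_{\l}$ at once, with no genericity or degeneration argument. If you insist on your specialization scheme, you must first prove the generic inclusion by independent means (for example, splitting of the filtration $V_\bullet$ at generic $\la$, using that the eigenvalues $x_j$ of $\Q$ on the graded pieces are pairwise distinct, combined with uniqueness up to scalar of the singular vector of weight $\la+\ve_{m_k}$) and then settle the regularity in $\la$ of the rescaled $u_{m_k}$ that you yourself identify as an obstacle; as written, part (a) is not proved.
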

\begin{proof}
The last statement
readily follows from the second.
The inclusion $ W_{i}\subset  V_{i}$ follows from Proposition \ref{prop_princ_diag}.
Since  $ M_{k}$ and $\gr\>  V_{k}$ are cyclic modules of the same highest weight,
either the projection $\pi_k\colon  M_{k}\to \gr\>  V_{k}$ is zero or coincides with $\gr\>  V_k$,
which is the case for generic $\la\in \mathfrak{C}^*_{\l}$.
In particular, $M_k$ is isomorphic to $\gr\>  V_k$ for all $\la$.
Denote $M_{k}'=W_{k-1}\cap M_{k}$. For each $k$ the projection $\pi_k$ factorizes to the
composition
$$M_k\twoheadrightarrow M_k/M_{k}'\simeq W_k/W_{k-1}\twoheadrightarrow W_k/(W_k\cap V_{k-1})\hookrightarrow \gr V_k,$$
where the left and middle arrows are surjective and the right one is injective. As argued, $\pi_k$
is either an isomorphism or $\pi_k=0$. If $M_{k}'=\{0\}$ for all $k\leqslant i$, then,  by ascending induction on $k$, all these maps
are isomorphisms, and $V_k=W_k$ including $k=i$.
Conversely, assuming  $V_i=W_i$, we get $M_{i}'=\{0\}$ and $V_{i-1}=W_{i-1}$. Descending induction on $i$ completes the proof.
\end{proof}
\begin{corollary}
For all $j\in [1,N]$, decomposition $W_j=\oplus_{i=1}^{j}M_i$ holds if and only if
$\pi_{i}(u_{i})\not =0$ for all $i=1,\ldots, j$.
\end{corollary}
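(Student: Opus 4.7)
The plan is to derive the corollary from Proposition \ref{direct_sum_decomposition} together with the zero-or-isomorphism dichotomy for the projection $\pi_i\colon M_i\to \gr V_i$ established in its proof. Since $u_i$ generates $M_i$ as a $U_q(\g)$-module, the inequality $\pi_i(u_i)\not=0$ is equivalent to $\pi_i\not=0$, and by the dichotomy to $\pi_i$ being an isomorphism. It therefore suffices to prove that $W_j=\oplus_{i=1}^j M_i$ if and only if $\pi_i$ is an isomorphism for every $i\leqslant j$.

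For the forward direction, I would assume $W_j=\oplus_{i=1}^j M_i$ and note that directness is automatically inherited by sub-sums, so $W_i=\oplus_{k=1}^i M_k$ for every $i\leqslant j$. Proposition \ref{direct_sum_decomposition} then yields $W_i=V_i$ for all such $i$, and passing to successive quotients gives
$$
M_i\simeq W_i/W_{i-1}=V_i/V_{i-1}=\gr V_i,
$$
i.e.\ $\pi_i$ is an isomorphism.

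For the backward direction I would run an ascending induction on $i\leqslant j$, with the base case $W_0=V_0=\{0\}$ being trivial. Assuming $V_{i-1}=W_{i-1}$ and that $\pi_i$ is an isomorphism, one plugs $W_i\cap V_{i-1}=W_{i-1}$ into the factorization
$$
M_i\twoheadrightarrow M_i/M_i'\simeq W_i/W_{i-1}\twoheadrightarrow W_i/(W_i\cap V_{i-1})\hookrightarrow \gr V_i
$$
from the proof of Proposition \ref{direct_sum_decomposition}; every arrow is forced to be an isomorphism, so $M_i'=W_{i-1}\cap M_i=\{0\}$ and $W_i=V_i$. Hence $W_i=W_{i-1}\oplus M_i$, and iterating up to $i=j$ gives $W_j=\oplus_{i=1}^j M_i$. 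No genuine obstacle arises beyond careful bookkeeping: the entire argument consists of threading the zero-or-iso dichotomy through the equivalence already proved in Proposition \ref{direct_sum_decomposition}, while keeping the induction on $i$ tidy.
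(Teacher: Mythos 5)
Your argument is correct and follows essentially the route the paper intends: the corollary is stated without a separate proof precisely because it falls out of Proposition \ref{direct_sum_decomposition} and the zero-or-isomorphism dichotomy for $\pi_i$ established inside its proof, which is exactly what you use (together with the observation that $\pi_i(u_i)\not=0$ is equivalent to $\pi_i\not=0$ since $u_i$ generates $M_i$). The only remark is that your backward induction could be shortened, since $\pi_i$ injective already gives $M_i\cap W_{i-1}\subseteq M_i\cap V_{i-1}=\{0\}$ directly, but this is a matter of economy, not correctness.
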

In particular, if the eigenvalues $\{x_k\}_{k=1}^N$ are pairwise distinct, the sum $W_{2\ell+3}=\oplus_{k=1}^{2\ell+3}  M_{k}$
is direct, and $W_{2\ell+3}=V_{2\ell+3}=\C^N\tp M_\la$. However, we are interested in the situation
when $x_{\ell+1}=x_{\ell+3}$. To address this case, we need to calculate the $\pi_{\ell+3}(u_{\ell+3})\in \gr V_{\ell+3}$.

Let $C_i$, $i=1,\ldots, 2\ell+3$, be the  scalar coefficient in the presentation $u_i= C_i w_i\tp v_\la \mod V_{i-1}$ and
$\hat C_i$ be similarly defined for $i=1,\ldots, 2l+3$.
Note that the image of $\hat u_i$ may turn zero in $\C^N\tp M_\la$, so $u_i$ is obtained
from $\hat u_{m_i}$ after an appropriated rescaling. This implies that $C_i$ is proportional to $\hat C_{m_i}$
up to a factor turning zero at $\la\in \mathfrak{C}^*_{\l}$.

Our next
goal is to calculate $\hat C_i$ for some $i$ of importance. We do it first for $i={n+1}$ in the case of odd $N$.
Retaining the principal term, we  write
$$
y_{n+1,i}=(-q)^{i-1}\prod_{k=1}^{i-1}[\la_{k}+n-k]\prod_{k=i+1}^{n}[\la_{k}+n-k+1]
\psi^{i,n+1}v_\la +\cdots
$$
\begin{propn}
$
\hat C_{n+1}=\prod_{j=1}^{n}[\la_j+1 + n-j]_q.
$
\end{propn}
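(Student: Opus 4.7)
The plan is to compute $\hat C_{n+1}$ by projecting the expansion $\hat u_{n+1}=\sum_{i=1}^{n+1}w_i\tp y_{n+1,i}$ onto the quotient $\hat V_{n+1}/\hat V_n$, which is generated by the image of $w_{n+1}\tp v_\la$. By Proposition \ref{prop_princ_diag}, only the principal Chevalley monomial $\psi^{i,n+1}$ in each $y_{n+1,i}$ survives modulo $\hat V_n$, giving $w_i\tp\psi^{i,n+1}v_\la\equiv S_i\, w_{n+1}\tp v_\la \mod \hat V_n$ for an explicit scalar $S_i$ that can be read off from (\ref{princ_diag}) using $\eps_k=0$, $(\ve_k,\ve_k)=1$ for $k\le n$, and $\ve_{n+1}=0$. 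Combined with the principal coefficient of $\psi^{i,n+1}$ in $y_{n+1,i}$ given by the displayed formula immediately above the proposition, this yields
\[
\hat C_{n+1}=\sum_{i=1}^{n+1}(-q)^{i-1}\Bigl(\prod_{k=1}^{i-1}[\la_k+n-k]_q\Bigr)\Bigl(\prod_{k=i+1}^{n}[\la_k+n-k+1]_q\Bigr)\,S_i,
\]
with $S_{n+1}=1$ encoding the direct contribution $w_{n+1}\tp y_{n+1,n+1}=c_{n+1}\,w_{n+1}\tp v_\la$.

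The remaining task is to show this alternating sum equals $\prod_{j=1}^n[\la_j+n-j+1]_q$. I would proceed by induction on $n$. The base case $n=1$ follows from $\hat u_2=w_1\tp f_{\al_1}v_\la-q[\la_1]_q\,w_2\tp v_\la$ together with the $q$-number identity $[\la_1+1]_q=q[\la_1]_q+q^{-\la_1}$. For the inductive step, I would exploit the subalgebra embedding $U_q(\s\o(2n-1))\hookrightarrow U_q(\s\o(2n+1))$ generated by $\{e_{\al_j},f_{\al_j}\}_{j=2}^n$: the terms with $i\ge 2$ reassemble, after the reindexing $j\mapsto j-1$, into the analogous sum defining $\hat C_n$ for $\s\o(2n-1)$ at the weights $(\la_2,\ldots,\la_n)$, which by the inductive hypothesis equals $\prod_{j=2}^n[\la_j+n-j+1]_q$; the $i=1$ term, combined with the recursion for $f_{\ve_1}$ applied to $v_\la$, contributes the remaining factor $[\la_1+n]_q$.

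The main obstacle is controlling the combinatorics of the inductive step: verifying term-by-term that $\hat C_{n+1}^{(n)}=[\la_1+n]_q\,\hat C_n^{(n-1)}$ requires careful bookkeeping of the scalars $S_i$, the signs $(-q)^{i-1}$, and the interplay between the two products $[\la_k+n-k]_q$ versus $[\la_k+n-k+1]_q$ in the principal coefficient. An alternative strategy is to compare both sides as polynomials in $\la_1,\ldots,\la_n$: show they share the zero locus $\bigcup_j\{\la_j=j-n-1\}$, corresponding to weights where $\hat V_{n+1}/\hat V_n$ acquires a proper submodule that forces $\hat C_{n+1}$ to vanish, and match a top-degree monomial in $q^{\la_1+\cdots+\la_n}$.
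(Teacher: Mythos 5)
Your derivation of the alternating-sum expression for $\hat C_{n+1}$ --- expanding $\hat u_{n+1}=\sum_i w_i\tp y_{n+1,i}$, projecting to $\hat V_{n+1}/\hat V_n$, and keeping only the principal monomials via Proposition \ref{prop_princ_diag} and formula (\ref{princ_diag}) --- is exactly the first step of the paper's proof, which records the resulting sum as (\ref{C-gl}). You diverge in how the sum is evaluated: the paper shifts $\la_i\mapsto\la_i-\la_{n+1}$ and quotes the already-proven factorization identity of \cite{AM}, Lemma 6.1 (the $\g\l$-case of the same computation), whereas you prove the identity from scratch by induction on $n$ along $\s\o(2n-1)\subset\s\o(2n+1)$. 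Your induction does close: every term with $i\geqslant 2$ carries the common factor $[\la_1+n-1]_q$, and after extracting it (together with one power of $q$) the reindexed sum is precisely the rank-$(n-1)$ expression at the weights $(\la_2,\ldots,\la_n)$; adding the $i=1$ term $q^{-\la_1+1-n}\prod_{j=2}^{n}[\la_j+n+1-j]_q$ and using $q[x]_q+q^{-x}=[x+1]_q$ yields the step $\hat C_{n+1}^{(n)}=[\la_1+n]_q\,\hat C_n^{(n-1)}$ that you state. Two small cautions: as written you attribute the lower-rank sum to the $i\geqslant 2$ terms alone and the factor $[\la_1+n]_q$ to the $i=1$ term, whereas in fact the prefactor $q[\la_1+n-1]_q$ coming from the $i\geqslant 2$ block must be recombined with the $i=1$ term to produce $[\la_1+n]_q$ --- this is precisely the bookkeeping you flag and is harmless once done; and with the normalization $y_{n+1,1}=f_{\ve_1}v_\la$ the computation fixes $\hat C_{n+1}$ only up to an overall sign (the singular vector is defined up to a scalar), which both you and the paper absorb silently. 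In sum, the paper's route is shorter because it imports an external lemma; yours is self-contained and makes the telescoping transparent. Your fallback zero-locus/leading-monomial argument is weaker as stated (one matched monomial does not determine a Laurent polynomial in the $q^{\la_j}$ without degree bounds), but it is not needed.
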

\begin{proof}
One can check that
\be
\hat C_{n+1}=\sum_{i=1}^{n+1} q^{i-1}q^{-\la_i+i-n-\dt_{i\>{n+1}}}
\prod_{j=1}^{i-1}[\la_j + n-j]_q
\prod_{j=i+1}^{n}[\la_j + n+1-j]_q
\label{C-gl}
\ee
Replacing $\la_i$ with $\la_i-\la_{n+1}$ one gets the expression, which is
shown in \cite{AM}, Lemma 6.1, to be equal to $\prod_{j=1}^{n}[\la_j-\la_{n+1}+1 + n-j]_q$,
for any $\la_i$, $i=1,\ldots,n+1$. This proves the lemma.
\end{proof}

Next we calculate  $\hat C_{N-l}$. First we assume $l=0$.
The coefficient $\hat C_N$ is $\sum_{i=1}^N c'_ic''_i$, where 
$
c'_{1}=[\frac{P}{2}]_q$, $c'_{2}=-q[\la_{1}+\frac{P}{2}-1]_{q}$,
and $c'_i$  for $i>2$ are given by formulas (\ref{c'_2n+1}) and (\ref{c'_2n}) (one should put $l=0$ there).
The coefficients $c''_i$ are obtained by specialization of (\ref{princ_diag}).
For $N=2n+1$ they are $c''_{N}=1$ and
$$
c''_1=(-1)^{p-1}q^{-2\la_{1}}q^{-2p+1}
,\>
c''_{1+k}=(-1)^{p-1-k}q^{-\la_{1}}q^{-2p+k},
\>
c''_{n+m}=q^{-\la_{1}}q^{-p-1+m},
\>
c''_{N-1}=q^{-\la_{1}},
$$
where $k=1,\ldots, p$, $m=1,\ldots, m+1$.
For $N=2n$, they are $c''_{N}=1$ and
$$
c''_1=(-1)^{p}q^{-2\la_{1}}q^{-2p+2}
,\quad
c''_{1+k}=(-1)^{p-k}q^{-\la_{1}}q^{-2p+1+k}
\quad
c''_{n+k}=q^{-\la_{1}}q^{-p+k},
\quad
c''_{N-1}=q^{-\la_{1}},
$$
where $k=1,\ldots, p$.

\begin{lemma}
In the symmetric case $l=0$, the singular vector $\hat u_{N}$ is equal to $\hat C_N w_{N}\tp v_\la$ modulo $\hat V_{N-1}$,
where $\hat C_N=(-1)^{[\frac{P+1}{2}]}[\la_1+\frac{P}{2}]_q[\la_1+P-1]_q$.
\label{C,l=0}
\end{lemma}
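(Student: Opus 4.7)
The plan is to read off $\hat C_N$ from the decomposition
\[
\hat u_N \;\equiv\; \sum_{i=1}^{N} c'_i c''_i\, w_N\tp v_\la \pmod{\hat V_{N-1}},
\]
which is forced by Proposition~\ref{prop_princ_diag}: each $y_i$ equals $c'_i\psi^{i,N}v_\la$ plus a combination of non-principal Chevalley monomials, the latter all landing in $\hat V_{N-1}$, while $w_i\tp \psi^{i,N}v_\la$ descends to $c''_i\, w_N\tp v_\la$ in the quotient. Thus the lemma reduces to the scalar identity
\[
\sum_{i=1}^{N} c'_i c''_i \;=\; (-1)^{\lfloor (P+1)/2\rfloor}[\la_1+\tfrac{P}{2}]_q[\la_1+P-1]_q,
\]
with $c'_i,c''_i$ as listed above the statement.

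Next, I would split the sum into the blocks $\{1\}$, $\{2,\dots,n+1\}$, $\{n+2,\dots,N-1\}$, $\{N\}$. Inside each middle block the recursive formulas (\ref{c'_2n+1}), (\ref{c'_2n}) for $c'_i$ factor as a block-constant times $(-q)^k$, while the displayed $c''_i$ contribute $(-1)^{p-k}q^{s+k}$ for an explicit shift $s$. The summand therefore reduces to a geometric progression $\sum_{k}(-1)^kq^{s+2k}$, which collapses to $\pm q^{s+r-1}[r]_q$ for the appropriate length $r$. Collecting the four blocks, one obtains an expression of the shape $A\,q^{-2\la_1}+B\,q^{-\la_1}[\la_1]_q+C$, where $A,B,C$ are explicit products of two $q$-integers in $P$ and $p$. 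The treatment for odd $N=2n+1$ and even $N=2n$ is parallel; only the length of the two geometric sums (one of them shifts between $p$ and $p\pm 1$) and an overall sign change. I would process odd $N$ first as the cleaner case and then indicate the trivial modifications for even $N$.

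The final step is to recognize the resulting polynomial in $q^{\pm\la_1}$ as the announced product. Writing out $[\la_1+P/2]_q[\la_1+P-1]_q$ via $[a]_q[b]_q=\frac{(q^a-q^{-a})(q^b-q^{-b})}{(q-q^{-1})^2}$ produces exactly three monomials in $q^{\pm\la_1}$, whose coefficients can be matched against $A,B,C$ one by one; the sign $(-1)^{\lfloor (P+1)/2\rfloor}$ comes out of the alternating geometric sums together with the prefactor $(-q)^p$ inherited from $c'_2$.

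The main obstacle is purely the bookkeeping of signs and $q$-powers across the blocks, since every factor other than the geometric sums is index-independent. To keep this under control I would pull out the common factor $-q[\la_1+P/2-1]_q$ from $c'_2$ at the outset, reducing the problem to a single $\la_1$-independent geometric identity of the same flavour as \cite{AM}, Lemma~6.1, which furnishes the required collapse of an alternating $q$-sum into a product of two $q$-integers. A useful sanity check at each stage is the classical limit $q\to 1$, in which the identity degenerates to a polynomial identity in $\la_1$ that can be verified by inspection.
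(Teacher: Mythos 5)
Your proposal is correct and follows essentially the same route as the paper: the coefficient is read off as $\hat C_N=\sum_{i=1}^N c'_ic''_i$ via Proposition \ref{prop_princ_diag}, the middle ranges are summed as alternating geometric progressions, and the result is identified with $(-1)^{[\frac{P+1}{2}]}[\la_1+\frac{P}{2}]_q[\la_1+P-1]_q$ by comparing the coefficients of $q^{\pm 2\la_1}$ and the $\la$-independent part, separately for odd and even $N$. The paper's proof is exactly this bookkeeping, so no further comment is needed.
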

\begin{proof}
The coefficient $(-1)^{[\frac{P+1}{2}]}\hat C_N$ is equal to
$$
q^{-2\la_{1}-2p+1}[\frac{P}{2}]_q+[\la_{1}+\frac{P}{2}-1]_{q}q^{-\la_{1}}\bigl(-\frac{q^{-2p+1}-q}{q-q^{-1}}
+q+\frac{q^{2p+1}-q^{}}{q-q^{-1}}\bigr)+q^{2p+1}[\la_{1}+\frac{P}{2}-1]_{q}[\la_{1}]_q
$$
if $P=2p+1$. For $P=2p$, it is equal to
$$
q^{-2\la_{1}-2p+2}[\frac{P}{2}]_q+[\la_{1}+\frac{P}{2}-1]_{q}q^{-\la_{1}}q\bigl(-\frac{q^{-2p+1}-q}{q-q^{-1}}
+\frac{q^{2p-1}-q^{-1}}{q-q^{-1}}\bigr)+q^{2p}[\la_{1}+\frac{P}{2}-1]_{q}[\la_{1}]_q.
$$
Counting the coefficients before $q^{\pm 2\la_{1}}$ and $\la$-independent terms proves the statement.
\end{proof}

Now consider the general case $l>0$.
\begin{propn}
The singular vector $\hat u_{N-l}$ is equal to $\hat C_{N-l}w_{N-l}\tp v_\la$ modulo $\hat V_{N-1}$,
where
$$
\hat C_{N-l}=(-1)^{[\frac{P+1}{2}]+l}[\la_{l+1}+\frac{P}{2}]_q
 \prod_{j=1 \atop j\not=l+1}^{l+2}
[\la_j+\la_{l+1}+P+1 + l-j]_q.
$$
\end{propn}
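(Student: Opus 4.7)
The strategy is to expand $\hat u_{N-l}=\sum_{i=1}^{N-l}w_i\tp y_i$ and to extract the coefficient of $w_{N-l}\tp v_\la$ in $\gr \hat V_{N-l}=\hat V_{N-l}/\hat V_{N-l-1}$. By Proposition \ref{prop_princ_diag}, only the principal Chevalley monomial $\psi^{i,N-l}v_\la$ in $y_i$ contributes modulo $\hat V_{N-l-1}$, so that
$$
\hat C_{N-l}=\sum_{i=1}^{N-l}c'_i\,c''_i,
$$
where $c'_i$ is the principal coefficient in $y_i$ (given by the formulas of Section 3.4) and $c''_i$ is the scalar obtained from (\ref{princ_diag}) applied to the pair $(i,N-l)$. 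Since $\la\in\mathfrak{C}^*_{\hat\l}$, one has $\la_i=0$ for $i=l+2,\ldots,n$ and $\la_{N-l}=-\la_{l+1}$, so the only non-trivial $\la$-dependence in $c''_i$ reduces to the single factor $q^{-\la_{l+1}-\la_i}$.

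I would split the sum into a central range $i\in[l+1,N-l]$ and a head range $i\in[1,l]$. For the central range, formulas (\ref{c'_2n+1}) and (\ref{c'_2n}) show that all $c'_i$ share the common factor $(-q)^{l}\prod_{i=1}^{l}[\la_i+\la_{l+1}+P+l-i]_q$; once it is pulled out, the principal path from $i\geqslant l+1$ to $N-l$ stays inside the central $\s\o(P+2)$-block of the Hasse diagram, and the residual sum is structurally identical to the one handled in Lemma \ref{C,l=0} (up to a uniform $q^{-\la_{l+1}}$ shift). The central contribution therefore equals $(-1)^{[\frac{P+1}{2}]}[\la_{l+1}+\frac{P}{2}]_q[\la_{l+1}+P-1]_q$ times the common factor.

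For the head range, the path from $i\leqslant l$ to $N-l$ first traverses the $\g\l(l)$-block, so $c''_i$ has the form of the summand in (\ref{C-gl}) with appropriate shifts. Substituting the explicit $c'_m$ for $m\leqslant l$, the head sum (together with its interface with $i=l+1$) matches, after the substitution $\la_i\mapsto \la_i+\la_{l+1}+P+l-i$, the template of Lemma 6.1 of \cite{AM}. It therefore collapses to $(-1)^l$ times $\prod_{j=1}^{l}[\la_j+\la_{l+1}+P+l+1-j]_q$, which has the effect of upgrading the ``$+l-i$'' shifts inherited from the common central factor to the desired ``$+l+1-j$'' shifts. Combining the central and head contributions yields the claimed formula for $\hat C_{N-l}$.

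The main obstacle is the careful sign and $q$-power bookkeeping in $c''_i$ across the symmetric pivot of the Hasse diagram, where the parity $\eps_k$ toggles and the quadratic contribution $(\ve_k,\ve_k)$ changes; the two parities of $N$ must also be tracked separately because the central path branches at $\al_n$ for even $N$. Once these are tabulated, the reduction of the central part to Lemma \ref{C,l=0} and of the head part to Lemma 6.1 of \cite{AM} are both mechanical.
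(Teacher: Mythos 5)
Your proposal is correct and follows essentially the same route as the paper: expand $\hat u_{N-l}=\sum_i w_i\tp y_i$, keep only principal terms via Proposition \ref{prop_princ_diag}, reduce the central block $i\geqslant l+1$ to Lemma \ref{C,l=0} after extracting the common factor $(-q)^{l}\prod_{i=1}^{l}[\la_{i}+\la_{l+1}+P+l-i]_{q}$, and collapse the head sum $i\leqslant l$ together with that common-factor term via the identity (\ref{C-gl}) (Lemma 6.1 of \cite{AM}) with $n\to l$ and $\la_i\to\la_i+\la_{l+1}+P$, finally absorbing $[\la_{l+1}+P-1]_q$ as the $j=l+2$ factor. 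The only minor imprecision is your phrasing of the substitution as $\la_i\mapsto\la_i+\la_{l+1}+P+l-i$ (the $+l-i$ shifts are already built into the template), but the outcome you state, $\prod_{j=1}^{l}[\la_j+\la_{l+1}+P+l+1-j]_q$, is the correct one, so the argument matches the paper's.
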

\begin{proof}
The second sum in the expansion $\hat u_{N-l}=\sum_{i}^lw_i\tp y_i+\sum_{i=l+1}^lw_i\tp y_i$  can be replaced with
$(-q)^{l} \prod_{i=1}^{l}[\la_{i}+\la_{l+1}+P+l-i]_{q}\hat C_{N-l}w_{N-l}\tp y_{N-l}\mod \hat V_{N-l-1}$, where
the factor before $\hat C_{N-l}$ comes from a different normalization of $c'_{l+1}$ and $c'_1$ in Lemma \ref{C,l=0}.
We have
$$
c'_{i}=(-1)^{[\frac{P+1}{2}]}(-q)^{i-1}\hat C_{N-l}\prod_{j=1}^{i-1}[\la_{j}+\la_{l+1}+P+l-j]_{q}\prod_{j=i+1}^{l} [\la_{j}+\la_{l+1}+P+l-j+1]_{q}
$$
and
$
c''_{i}=(-1)^{[\frac{P+1}{2}]-l+i-1}q^{-P-l+i+}q^{-\la_{i}-\la_{l+1}}
$
for $i=1,\ldots,l$. Note with care that $c''_l=-q^{-2}q^{-\la_l+\la_{l+1}}c''_{l+1}$.
Summing up the products
$$
c'_ic''_{i}=(-1)^{l}q^{i-1}q^{-l+i}q^{-\la_{i}-\la_{l+1}-P}
\hat C_{N-l}\prod_{j=1}^{i-1}[\la_{j}+\la_{l+1}+P+l-j]_{q}\prod_{j=i+1}^{l} [\la_{j}+\la_{l+1}+P+l-j+1]_{q}
$$
from $i=1$ to $i=l$ and adding
$
 (-q)^{l} \prod_{i=1}^{l}[\la_{i}+\la_{l+1}+P+l-i]_{q}\hat C_{N-l}
$
 one gets $\hat C_{N-l} (-1)^{l}$ times the right-hand side of (\ref{C-gl}), where one should replace $n$ with $l$ and $\la_i$ with $\la_i+\la_{l+1}+P$ for $i=1,\ldots, l$. Finally, since $\la_{l+2}=0$, the factor $[\la_{l+1}+P-1]_q$ is included in the
product as $[\la_j+\la_{l+1}+P+1+l-j]_q$, $j=l+2$.
\end{proof}

The operator
$\Q$ satisfies on $\C^N\tp \hat M_\la$ the polynomial equation $\prod_{l=1}^{2l+3}(\Q-\hat x_i)=0$.
When projected to $\End(\C^N\tp M_\la)$, it satisfies the equation $\prod_{l=1}^{2\ell+3}(\Q-x_i)=0$, where
$x_i=\hat x_{m_i}$.
Denote by $\bar C_{\ell+3}$ the product of $\hat x_{l+1}-\hat x_k$ over all $k\leqslant l$ such that $k\not =m_i$,
$i=1,\ldots, \ell$. Put $C_{\ell+3}=\frac{\hat C_{\ell+3}}{\bar C_{\ell+3}}$. Using arguments similar to  \cite{AM}, Lemma 6.6, one can prove that
the image of  $u_{\ell+3}=\frac{1}{\bar C_{\ell+3}}\hat u_{\ell+3}$ in $\C^N\tp M_\la$ is regular in $q$ and $\la\in \mathfrak{C}^*_{\l}$. Then $u_{\ell+3}=C_{\ell+3}w_{\ell+3}\tp v_\la\mod V_{\ell+2}$
is a singular vector. Similarly we define $u_{n+1}$ for the case $N=2n+1$, $P=1$.

\begin{propn}
\label{Filt_V_N-l}
Suppose that $\la\in \mathfrak{C}^*_{\l,'}$ and $q\in \C$ are such that $\{x_i\}_{i=1}^{2\ell+3}-\{x_{\ell+3}\}$ are pairwise distinct.
Then $\C^N\tp M_\la=\oplus_{i=1}^{2l+3} M_{i}$.
\end{propn}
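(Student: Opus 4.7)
The plan is to invoke the Corollary immediately following Proposition~\ref{direct_sum_decomposition}, which reduces the asserted direct sum decomposition $\C^N\tp M_\la=\oplus_{i=1}^{2\ell+3}M_i$ to the non-vanishing of the leading coefficients $C_i$ defined by $u_i=C_i\,w_{m_i}\tp v_\la \mod V_{m_i-1}$ for all $i=1,\ldots,2\ell+3$. Once this non-vanishing is established, the Corollary yields $W_{2\ell+3}=V_{2\ell+3}=\C^N\tp M_\la$, which is precisely the desired decomposition.

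For indices $i\neq\ell+3$, the eigenvalue $x_i$ of $\Q$ is distinct from every other $x_j$ with $j\neq\ell+3$. The singular vector $u_i$ is constructed via the generating coefficients of Section~3 (together with the $\g\l$-block formulas of \cite{AM} for the $GL$ factors of $L$), and its leading coefficient $C_i$ is a product of $q$-numbers that is non-zero for $\la\in\mathfrak{C}^*_{\l,reg'}$. These cases are parallel to the generic analysis of \cite{M2,AM}, and the hypothesis that $\{x_j\}_{j\neq\ell+3}$ are pairwise distinct ensures no degeneracy arises.

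The decisive case is $i=\ell+3$, where $x_{\ell+3}=x_{\ell+1}$ under condition (\ref{-1eig}). One uses $u_{\ell+3}=\hat u_{N-l}/\bar C_{\ell+3}$; by the argument imported from \cite{AM}, Lemma~6.6, this projection remains regular in $q$ and $\la$ even at the collision point, so that $C_{\ell+3}=\hat C_{N-l}/\bar C_{\ell+3}$. By the preceding Proposition,
$$
\hat C_{N-l}=(-1)^{[\frac{P+1}{2}]+l}[\la_{l+1}+\tfrac{P}{2}]_q\prod_{\substack{j=1\\j\neq l+1}}^{l+2}[\la_j+\la_{l+1}+P+1+l-j]_q.
$$
Under (\ref{-1eig}), $q^{2\la_{l+1}+P}=-1$, whence $q^{\la_{l+1}+P/2}$ is a primitive fourth root of unity and $[\la_{l+1}+\tfrac{P}{2}]_q\neq 0$ (since $q$ is not a root of unity). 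The remaining $q$-number factors are non-zero for $\la\in\mathfrak{C}^*_{\l,reg'}$ by the definition of that set. The denominator $\bar C_{\ell+3}$, a product of eigenvalue differences $\hat x_{l+1}-\hat x_k$, is non-zero by the pairwise-distinctness hypothesis. Hence $C_{\ell+3}\neq 0$.

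The main obstacle is precisely the case $i=\ell+3$: one must confirm that the $q$-number factor $[\la_{l+1}+\tfrac{P}{2}]_q$ survives the specialization (\ref{-1eig}), which it does because $q^{\la_{l+1}+P/2}=\pm\i$ rather than $\pm 1$, and that no unforeseen cancellation occurs between $\hat C_{N-l}$ and $\bar C_{\ell+3}$. The marginal case $P=1$ for $N=2n+1$ is handled analogously using the singular vector $u_{n+1}$ singled out earlier. Combining these observations, $\pi_i(u_i)\neq 0$ for all $i=1,\ldots,2\ell+3$, and the Corollary delivers the statement.
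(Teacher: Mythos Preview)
Your proof reaches the right conclusion, and your treatment of the decisive case $i=\ell+3$---in particular the check that $[\la_{l+1}+\tfrac{P}{2}]_q\neq 0$ under (\ref{-1eig}), since $q^{\la_{l+1}+P/2}=\pm\i$---is exactly the point that needs care. But the route through the Corollary to Proposition~\ref{direct_sum_decomposition}, which obliges you to verify $\pi_i(u_i)\neq 0$ for \emph{every} $i=1,\ldots,2\ell+3$, is longer than necessary and not fully executed: for $i>\ell+3$ the paper never computes $C_i$, and your appeal to ``parallel generic analysis'' is a sketch, not a verification. You also twice invoke $\la\in\mathfrak{C}^*_{\l,reg'}$, whereas the hypothesis is only $\la\in\mathfrak{C}^*_{\l,'}$ together with the eigenvalue condition; the non-vanishing of the remaining factors of $\hat C_{N-l}$ should be tied to the pairwise distinctness of the $x_i$ (each such factor vanishes exactly when two of the $\hat x_k$, and ultimately two of the $x_i$, coincide), not to membership in the regular locus.

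The paper's argument is shorter because it exploits the $\Q$-action directly. Since $\Q$ is the scalar $x_i$ on each $M_i$ and the $x_i$ with $i\neq\ell+3$ are pairwise distinct (while $x_{\ell+3}=x_{\ell+1}$), the only intersection among the $M_i$ that is not automatically zero is $M_{\ell+1}\cap M_{\ell+3}$. The paper settles this single case via Proposition~\ref{direct_sum_decomposition}: eigenvalue distinctness gives $W_{\ell+2}=\oplus_{i\leqslant\ell+2}M_i$, hence $W_{\ell+2}=V_{\ell+2}$; then $C_{\ell+3}\neq 0$ yields $W_{\ell+3}=V_{\ell+3}$, forcing $M_{\ell+3}\cap W_{\ell+2}=\{0\}$ and in particular $M_{\ell+1}\cap M_{\ell+3}=\{0\}$. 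No computation of $C_i$ for $i\neq\ell+3$ is required.
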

\begin{proof}
All we need to check is that the sum $M_{\ell+1}+ M_{\ell+3}$ is direct. We have
$W_{\ell+2}=\oplus_{i=1}^{\ell+2}M_i$ hence $W_{\ell+2}=V_{\ell+2}$, by Proposition \ref{direct_sum_decomposition}.
Further, $C_{\ell+3}\not =0$ implies $W_{\ell+3}=V_{\ell+3}$, hence
$M_{\ell+1}\cap M_{\ell+3}\subset  W_{\ell+2}\cap M_{\ell+3}=\{0\}$, again by Proposition \ref{direct_sum_decomposition}.
\end{proof}

\begin{corollary}
\label{min_poly_Q}
For  $\la\in \mathfrak{C}^*_{\l,'}$, the operator $\Q\in \End(\C^N\tp M_\la)$ satisfies a polynomial equation
of degree $2\ell +2$ with roots $\{x_i\}_{i=1}^{2\ell+3}-\{x_{\ell+3}\}$.
\end{corollary}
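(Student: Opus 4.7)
The plan is to deduce the polynomial equation on $\Q$ directly from the direct sum decomposition of Proposition \ref{Filt_V_N-l}, and then extend to arbitrary $\la\in \mathfrak{C}^*_{\l,'}$ by a density argument, exploiting the fact (stated at the end of Section 2) that $\mathfrak{C}^*_{\l,reg'}$ is dense in $\mathfrak{C}^*_{\l,'}$.

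First I would fix a generic $\la\in \mathfrak{C}^*_{\l,reg'}$ and generic $q$ such that the $2\ell+2$ quantities $\{x_i\}_{i=1}^{2\ell+3}-\{x_{\ell+3}\}$ from (\ref{eigenvalues}) are pairwise distinct. This is a non-vanishing condition on finitely many differences, hence holds on a dense open subset. Under this assumption, Proposition \ref{Filt_V_N-l} applies and gives the decomposition $\C^N\tp M_\la=\oplus_{i=1}^{2\ell+3} M_i$. Since $\Q$ acts on each $M_i$ as scalar multiplication by $x_i$, and since $x_{\ell+1}=x_{\ell+3}$ holds throughout $\mathfrak{C}^*_{\l,'}$ by the defining condition (\ref{-1eig}), the operator $\prod_{i=1,\,i\not=\ell+3}^{2\ell+3}(\Q-x_i)$ annihilates each summand and therefore the whole module. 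This yields a polynomial equation for $\Q$ of degree $2\ell+2$ with the prescribed roots.

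Next I would extend the identity to all $\la\in \mathfrak{C}^*_{\l,'}$. The operator $\Q\in\End(\C^N)\tp U_q(\g)$ acts on $\C^N\tp M_\la$ by matrix coefficients that, once one fixes a weight basis in $M_\la\simeq U_\l^-$, depend regularly on $q^{\pm(\la,\ve_i)}$; the same regularity holds for the eigenvalues $x_i=x_i(\la)$ in (\ref{eigenvalues}). Consequently, the coefficients of the polynomial $\prod_{i\not=\ell+3}(\Q-x_i(\la))$ are regular functions of $\la$ on $\mathfrak{C}^*_{\l,'}$ valued in $\End(\C^N\tp M_\la)$, the latter being identified for all $\la$ via the $\la$-independent basis of $U_\l^-$. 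Vanishing on the dense subset $\mathfrak{C}^*_{\l,reg'}$ then forces vanishing everywhere on $\mathfrak{C}^*_{\l,'}$.

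The main obstacle I anticipate is the extension step: one has to ensure that the module $\C^N\tp M_\la$ can be viewed as a fixed underlying vector space with $\Q$ depending regularly on $\la$, so that the polynomial identity specializes correctly at the non-generic $\la$ where further collisions among the remaining $x_i$ occur. This is handled by working on the $\la$-independent graded module $\gr V_\bullet$ furnished by Proposition \ref{levi_filtration}, on which $\Q$ and its eigenvalues are manifestly regular in $\la$, and then invoking the density of $\mathfrak{C}^*_{\l,reg'}$.
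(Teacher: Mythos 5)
Your proof is correct and follows essentially the same route the paper intends: the corollary is read off from Proposition \ref{Filt_V_N-l}, since $\Q$ acts by the scalar $x_i$ on each summand $M_i$ and $x_{\ell+3}=x_{\ell+1}$ for $\la\in\mathfrak{C}^*_{\l,'}$, so $\prod_{i\neq\ell+3}(\Q-x_i)$ annihilates $\C^N\tp M_\la$. Your extra density/regularity step, extending from the locus where the remaining eigenvalues are pairwise distinct (using the $\la$-independent identification $M_\la\simeq U_\l^-$) to all of $\mathfrak{C}^*_{\l,'}$, is a reasonable way of making explicit what the paper leaves implicit in stating the corollary without the distinctness hypothesis.
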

\section{Quantization of borderline Levi classes.}
Fix $\la \in \mathfrak{C}_{\l,reg'}^*$ and define $\mub\in \C^{\ell+2}[\![\hbar]\!]$ by
\be
\mu_i=x_i, \quad i=1,\ldots, \ell+2.
\label{mu_param}
\ee
The eigenvalues of $\Q$ on $\End(\C^{N}\tp M_\la)$  are expressed through $\mub$ by
\be
\label{e_v in hatM_la}
\mu_i,\quad \mu_i^{-1}q^{-2N+2(n_i+1)},\quad i=1,\ldots,\ell,\quad \mu_{\ell+1}=-q^{-N+2}, \quad \mu_{\ell+2}=q^{-N+P},
\ee
cf. (\ref{eigenvalues}).
By construction, $\lim_{\hbar\to0}\mub \in \hat \Mc_K'$.

Define central elements $\tau_k  \in U_q (\g)$ by
\be
\label{q-trace}
\tau_k=\Tr\bigl(q^{2h_\rho}\Q^k\bigr)\in \A,
\ee
where $\rho=\frac{1}{2}\sum_{\al\in \Rm_+}\al=\sum_{i=1}^n(\frac{N}{2}-i)\ve_{i}$ is the half-sum of positive roots.
A module $M$ of highest weight $\la$ determines a one dimensional representation $\chi^\la$ of the center of  $U_q (\g)$,
which assigns a scalar to each  $\tau_k$:
\be
\chi^\la(\tau_k)=
\sum_{\nu} q^{2k(\la+\rho,\nu)-2k(\rho,\ve_1)+k(\nu,\nu)-k}
\prod_{\al\in \Rm_+}\frac{ q^{(\la+\nu+\rho,\al)}-q^{-(\la+\nu+\rho,\al)}}{ q^{(\la+\rho,\al)}-q^{-(\la+\rho,\al)}},
\label{char_V}
\ee
cf. \cite{M2}, formula (24).
The summation is taken over  weights $\nu$ of the module $\C^{N}$.
Restriction of $\la$ to $\mathfrak{C}_{\l,reg'}^*$ makes
the right hand side a function of the vector $\mub$ defined in (\ref{mu_param}). We denote this function by
$\vt_{\nb,q}^k(\mub)$, where $\nb=(n_1,\ldots,n_\ell, 1,p)$ is the integer valued vector of multiplicities.
In the limit $\hbar \to 0$, $\vt_{\nb,q}^k(\mub)$ goes over
into the right hand side of (\ref{tr_cl}), where
$\mu_i=\lim_{h\to 0}q^{2(\la,\ve_{m_i})}$, $i=1,\ldots,\ell$.

In general, $\tau^k \! \mod \hbar$ do not separate classical conjugacy classes of $SO(2n)$.
That is done by an additional invariant which nevertheless turns zero on a class with eigenvalues $\pm 1$.
Its quantum counterpart $\tau^-$ yields $\chi^\la(\tau^-)=\prod^n_{i=1}(q^{2(\la+\rho,\ve_i)}-q^{-2(\la+\rho,\ve_i)})$,
cf. \cite{M2}, Proposition 7.4. It vanishes for $\la\in \mathfrak{C}_{\l,'}^*$  and can be ignored.

Denote by $S\in \End(\C^{N})\tp \End(\C^{N})$ the product of the ordinary flip on $\C^{N}\tp \C^{N}$ and the R-matrix
of the form of \cite{FRT}. It is $U_\hbar(\g)$-invariant, i.e. commutes with $\Delta (x)$ for all $x \in U_\hbar(\g)$.
Let $\kappa\in \End(\C^{N})\tp \End(\C^{N})$ be the one-dimensional projector to the
 trivial  $U_\hbar(\g)$-submodule.
Denote by  $\C_\hbar[O(N)]$ the associative algebra generated by
the matrix entries $A=(A_{ij})_{i,j=1}^{N}\in \End(\C^{N})\tp \C_\hbar[O(N)]$
modulo the relations
\be
S_{12}A_2S_{12}A_2=A_2S_{12}A_2S_{12}
,\quad A_2S_{12}A_2\kappa=q^{-{N}+1}\kappa=\kappa A_2S_{12}A_2.
\label{A-matrix}
\ee
These relations are understood in $\End(\C^{N})\tp \End(\C^{N})\tp \C_\hbar[O(N)]$,
and the indices distinguish the two copies of $\End(\C^{N})$, in the usual
way.

The algebra $\C_\hbar[O(N)]$ is an equivariant  quantization of $\C[O(N)]$. The algebra $\C_\hbar[G]$, $G=SO(N)$, is a quotient of $\C_\hbar[O(N)]$
setting a quantized determinant to $1$. Its explicit form is immaterial, because it is
automatically fixed  by the equations of conjguacy class.
The algebra $\C_\hbar[G]$ can be realized as a $U_\hbar(\g)$-invariant subalgebra in $U_q(\g)$,
with respect to the adjoint action.
The embedding is implemented via the assignment
$$
 \End(\C^{N})\tp \C_\hbar[G]\ni
 A\mapsto \Q\in \End(\C^{N})\tp  U_q(\g).
$$
\begin{thm}
\label{QCC}
Suppose that $\la=\mathfrak{C}_{\l,reg'}^*$ and let $\mub$ be as in (\ref{mu_param}).
The quotient of $\C_\hbar[G]$ by the ideal of relations
\be
\prod_{i=1}^{\ell}(\Q-\mu_i)\times (\Q-\mu_{\ell+1})(\Q-\mu_{\ell+2})\times
\prod_{i=1}^{\ell}(\Q-\mu_i^{-1}q^{-2N+2(n_i+1)})=0,
\label{q-min_pol}
\ee
\be
\Tr_q(\Q^k)=\vt_{\nb,q}^k(\mub)
\label{q-traces}
\ee
is an equivariant quantization of the class $\lim_{\hbar\to 0}\mub \in\hat \Mc_L'$.
It is the image of $\C_\hbar[G]$ in the algebra of endomorphisms
of the $U_q(\g)$-module $M_\la$.
\end{thm}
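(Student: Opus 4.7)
The plan is to combine Theorem~\ref{prop_clas_so} (the classical statement) with the spectral data of Corollary~\ref{min_poly_Q} and a standard flatness argument, along the lines of \cite{M1,M2}. Write $J_\hbar\subset \C_\hbar[G]$ for the two-sided ideal generated by the relations (\ref{q-min_pol}) and (\ref{q-traces}) and $\B=\C_\hbar[G]/J_\hbar$ for the quotient algebra.

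The first step is to show that the representation $\C_\hbar[G]\to \End(M_\la)$ induced by $A\mapsto \Q$ factors through $\B$. Relation (\ref{q-min_pol}) holds in the image by Corollary~\ref{min_poly_Q}: the $2\ell+2$ factors appearing there correspond, via the spectrum formula (\ref{e_v in hatM_la}) and the parametrization (\ref{mu_param}), exactly to the eigenvalues of $\Q$ on $\C^N\tp M_\la$ surviving after the collision $x_{\ell+1}=x_{\ell+3}$. Relation (\ref{q-traces}) holds because each quantum trace $\tau_k=\Tr_q(\Q^k)$ is central in $U_q(\g)$ and therefore acts on the highest weight module $M_\la$ by the scalar $\chi^\la(\tau_k)=\vt^k_{\nb,q}(\mub)$, by the very construction of $\vt^k_{\nb,q}$ from (\ref{char_V}) and the substitution (\ref{mu_param}).

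The second step is to identify the classical limit of $\B$. Both sets of defining relations are $U_\hbar(\g)$-invariant, so the adjoint action of $U_\hbar(\g)$ descends to $\B$, and at $\hbar=0$ they specialize, respectively, to (\ref{min_pol_cl}) and (\ref{tr_cl}). Theorem~\ref{prop_clas_so} then identifies the classical quotient with $\C[O]$. The third step is to upgrade this to a flat deformation: since $\C_\hbar[G]$ is a free $\C[\![\hbar]\!]$-module whose classical fibre is $\C[G]$, Nakayama-type arguments show that once the classical quotient has the correct size, $\B\simeq \C[O][\![\hbar]\!]$ as a $\C[\![\hbar]\!]$-module. Combining this with the factorization of step one promotes $\B$ into an equivariant quantization and identifies it with the image of $\C_\hbar[G]$ in $\End(M_\la)$, since the map is by construction surjective onto that image and both sides have the same classical reduction $\C[O]$.

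The principal obstacle is the semisimplicity needed to make the spectral argument work: at the boundary points $\la\in\mathfrak{C}^*_{\l,'}$ the roots $x_{\ell+1}$ and $x_{\ell+3}$ coincide, and a priori this could prevent $\Q$ from satisfying a polynomial of degree only $2\ell+2$, thereby producing spurious quantum relations on $\mathfrak{C}^*_{\l,reg'}$ that would destroy flatness. This is ruled out precisely by Proposition~\ref{Filt_V_N-l}, whose proof occupies the singular-vector analysis of Sections~\ref{Sec_l=0}--\ref{Sec_l=2} together with the coefficient computation yielding $C_{\ell+3}\neq 0$; without it the whole scheme would collapse. Once that proposition is in hand, the classical-to-quantum bootstrap proceeds exactly as in \cite{M1,M2}.
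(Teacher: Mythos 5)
Your proposal takes essentially the same route as the paper, whose own proof is just the reference to \cite{M1}, Theorem 10.1 and \cite{M2}, Theorem 8.2: factor the representation $A\mapsto\Q$ through the quotient via Corollary \ref{min_poly_Q} and the central characters, match the classical limit against Theorem \ref{prop_clas_so}, run the flat-deformation bootstrap of the cited works, and correctly single out Proposition \ref{Filt_V_N-l} (the direct-sum decomposition at the boundary weights) as the genuinely new input that makes that bootstrap available here. One small caution: the ``Nakayama-type'' step only bounds the quotient from above, while the lower bound --- that the image of $\C_\hbar[G]$ in $\End(M_\la)$ is itself a flat deformation of $\C[O]$ --- is where \cite{M1,M2} do the real work (complete reducibility and character counting for the locally finite part of $\End(M_\la)$), again resting on the module analysis you cite rather than on freeness of $\C_\hbar[G]$ alone.
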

\begin{proof}
The proof is similar to \cite{M1}, Theorem 10.1. and \cite{M2}, Theorem 8.2.
\end{proof}
\noindent
Theorem \ref{QCC} describes the ideal in $\C_\hbar[G]$.
To describe the ideal in $\C_\hbar[O(N)]$, one should replace $\Q$ with $A$ in (\ref{q-min_pol}) and (\ref{q-traces})
and add the relations  (\ref{A-matrix}).

\appendix
\section{Appendix}
\begin{lemma}
Suppose that $N\geqslant 5$ and $y=\sum_{m=1}^{d_P^0}A_m\phi_m v_\la\in \Phi^{0}$. Then, for all $m=1,\ldots,p+1$, one has $e_{m}y=B_m\phi_m'v_\la$, where the scalar factors $B_m$
are
\be
B_1&=&A_1[\la_1]_q+A_2[\la_1-1]_q,\nn\\
 B_{i}&=& A_{i-1}  + [2]_{q}A_{i} + A_{i+1},\quad i=2,\ldots, d_P^1-1,
\nn\\
B_{p}=B_{p+1}&=& A_{p-1} + [2]_{q}A_{p},
\quad \mbox{ for even } N,
 \nn\\
 B_{p+1}&=& A_{p} + \left(1 + [2]_{q} \right) A_{p+1},
\quad \mbox{ for odd } N.
 \nn
 \ee
\end{lemma}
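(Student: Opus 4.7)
The plan is to reduce the statement to a scalar bookkeeping problem and then verify each case by direct propagation of $e_{\al_i}$ through the monomial $\phi_m$. By Lemma \ref{phi'} the weight subspace $[U_\l^-]_{-2\ve_1+\al_i}$ is one-dimensional and spanned by $\phi_i'$, so any vector in $M_\la$ of weight $\la-2\ve_1+\al_i$ is automatically a scalar multiple of $\phi_i' v_\la$. Consequently $e_{\al_i} y$ is determined by one scalar $B_i$, which I would extract by summing the contributions $A_m \, e_{\al_i}\phi_m v_\la$ over $m$.

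First I would use the standard identities $[e_{\al_i},f_{\al_j}] = \delta_{ij}[h_{\al_i}]_q$, the commutation of $e_{\al_i}$ with $f_{\al_j}$ for non-adjacent roots, and the quantum Serre relations to propagate $e_{\al_i}$ rightwards through $\phi_m$. Each pass over an $f_{\al_i}$ contributes a Cartan factor $[h_{\al_i}]_q$ whose value depends on the weight of the tail applied to $v_\la$. Since $\la \in \mathfrak{C}^*_{\hat\l}$ satisfies $(\la,\al_i)=0$ for $i\geqslant 2$ and $(\la,\al_1)=\la_1$, these factors collapse to pure $q$-integers (for $i\geqslant 2$) or to expressions of the form $[\la_1-s]_q$ (for $i=1$). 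A weight count shows that only $m\in\{i-1,i,i+1\}$ can contribute a nonzero multiple of $\phi_i' v_\la$: for each of $\phi_{i\pm 1}$ the unique relevant $f_{\al_i}$ yields the terms $A_{i-1}$ and $A_{i+1}$, while the leftmost $f_{\al_i}$ of $\phi_i$ produces the coefficient $[2]_q$ because $[h_{\al_i}]_q$ evaluated on $\phi_i' v_\la$ collapses to $[(\al_i,\al_i)]_q=[2]_q$ once the contributions from $\la$ and from $-2\ve_1$ cancel.

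The boundary cases $i=1$, $i=p$, $i=p+1$ require separate treatment. For $i=1$ the nontrivial weight dependence on $\la_1$ gives $B_1 = A_1[\la_1]_q+A_2[\la_1-1]_q$ directly. For even $N$, the Serre-type identity $\phi_{p+1}=\phi_p$ recorded just after the definition of $\phi_m$ merges the two endpoint equations and yields $B_p=B_{p+1}=A_{p-1}+[2]_q A_p$. For odd $N$, the short-root structure of $\al_{p+1}$ together with the adjacent double occurrence $f_{\al_{p+1}}f_{\al_{p+1}}$ inside $\phi_{p+1}$ produces the coefficient $1+[2]_q$ in front of $A_{p+1}$: one of the two Cartan factors evaluates to $1$ (the inner $f_{\al_{p+1}}$ sits at a position where the weight contribution reduces to $[1]_q=1$), and the other evaluates to $[2]_q$.

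The main obstacle will be carefully handling the $q$-powers and signs at the tail, in particular the interplay between the short-root normalization of $\al_n$ for odd $N$ and the fork at $\al_n$ for even $N$, including a clean verification of the Serre identity $\phi_{p+1}=\phi_p$; away from this tail region the argument is a purely mechanical propagation using the commutation relations together with the one-dimensionality of the relevant weight spaces.
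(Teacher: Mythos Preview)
Your overall plan—propagate $e_{\al_i}$ through each $\phi_m$ via $[e_{\al_i},f_{\al_j}]=\dt_{ij}[h_{\al_i}]_q$, evaluate the Cartan factors on $v_\la$, and collect the result in the one-dimensional space $\C\,\phi_i'v_\la$—is exactly the paper's approach; its proof reads, in full, ``A straightforward calculation.''

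Two points in your sketch are inaccurate and would mislead you if used as shortcuts rather than as heuristics. First, no ``weight count'' singles out $m\in\{i-1,i,i+1\}$: every $\phi_m$ has the same weight $-2\ve_1$, so every $e_{\al_i}\phi_m v_\la$ already lies in $[M_\la]_{\la-2\ve_1+\al_i}=\C\,\phi_i'v_\la$. That the coefficient vanishes for $|m-i|>1$ is an honest cancellation that emerges only from the calculation; it is part of the content of the lemma, not a way around it. Second, each $\phi_m$ contains $f_{\al_i}$ exactly twice (the total weight $-2\ve_1=-2\sum_j\al_j$ forces every $f_{\al_j}$ to occur twice), so there is no ``unique relevant $f_{\al_i}$'' in $\phi_{i\pm1}$; both commutator terms contribute and must be combined before they reduce to the bare coefficients $A_{i\pm1}$. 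Once you actually perform the propagation rather than predict its outcome, the generic formula and the boundary cases at $i=1$ and $i=p,p+1$ come out as stated.
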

\begin{proof}
A straightforward calculation.
\end{proof}
\begin{lemma}
Suppose that $y=\sum_{m,i}A_m^i\phi_m^i v_\la\in \Phi^{1}$, where $(A_m^i)\in \C^{d_P^{1}}$. Then, for all $m=1,\ldots,p+1$, one has $e_{\al_m}y=\sum_{i=r_m}^3 B_m^i{\phi'}_m^iv_\la$, where the scalar factors $B_m^i$
are as follows.

\noindent
\noindent
a.1) $P=3$.
$$
B^2_1=A^1_1([\la_1]_q+[\la_1-1]_q)+A^2_1[\la_1]_q, \quad B^3_1=A^3_1([\la_1]_q+[\la_1+1]_q)+A^2_1[\la_1]_q,
$$
a.2) $P=2p+1\geqslant 5$
$$
\begin{array}{lll}
\begin{array}{llll}
B^2_1&=&A_{1}^{1}\left[ \la_{1} \right]_{q} + A_{2}^{1}\left[ \la_{1} - 1 \right]_{q}
	+ A_{1}^{2}\left[ \la_{1} + 1 \right]_{q} + A_{2}^{2}\left[ \la_{1} \right]_{q}, \\
B^3_1&=&A_{1}^{3}\left[ \la_{1} + 1 \right]_{q} + A_{2}^{3}\left[ \la_{1} \right]_{q}, \\
B_m^k&=&A_{m-1}^{k} + A_{m}^{k}\left[ 2 \right]_{q} + A_{m+1}^{k}, & 2\leqslant m\leqslant p,\nn\\
B_{p+1}^i&=&A_{p}^{i} + \left( 1 + \left[ 2 \right]_{q} \right)	A_{p+1}^{i} +  A_{p+1}^{2}, &i=1,3,\nn\\
B_{p+1}^2&=&A_{p}^{2} + A_{p+1}^{2}. 			\nn
\end{array}
\end{array}
$$

\noindent
b.1) $P=4$
$$
\begin{array}{llll}
B^2_1=A^1_1[\la_1]_q+A^2_1[\la_1+1]_q, \quad
B^3_1=A^3_1[\la_1+1]_q+A^2_2[\la_1]_q,
\\
B^2_2=A^1_1[\la_1]_q+A^3_1[\la_1+1]_q,\quad
B^3_2=A^2_1[\la_1]_q+A^2_2[\la_1+1]_q.
\end{array}
$$

\noindent
b.2) $P=2p\geqslant 6$
\be
 B_{i}^k&=& A_{i-1}^k  + [2]_{q}A_{i}^k + A_{i+1}^k, \quad i=1,\ldots, p-1,
\ee
whenever the pair $(i,k)$ is distinct from specified below, in which case   $B_{i}^k$ are
$$
\begin{array}{lll}
\begin{array}{llll}
B_{p-1}^2&=&A_{m-3}^{2} + A_{p-1}^{2}\left[ 2 \right]_{q} + A_{p}^{2} + A_{p+1}^{2},\\
B_{p}^2&=&A_{p-1}^{2} + A_{p}^{2}\left[ 2 \right]_{q},& 			\nn\\
B_{p+1}^2&=&A_{p-1}^{2} + A_{p+1}^{2}\left[ 2 \right]_{q},& 			\nn\\
B_{p}^i&=&A_{p-1}^{i} + A_{p}^{i} \left[2\right]_{q} + A_{p+1}^{2}, &i=1,3,  \\
B_{p+1}^i&=&A_{p-1}^{i} + A_{p}^{i} \left[2\right]_{q} + A_{p}^{2}, &i=1,3.  \\
\end{array}
\end{array}
$$
\end{lemma}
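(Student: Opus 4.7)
The statement is a purely computational lemma giving explicit coefficients $B_m^i$. My plan is to reduce the calculation to a finite enumeration of applications of $[e_{\alpha_m}, f_{\alpha_j}] = \delta_{mj}\,(K_{\alpha_m}-K_{\alpha_m}^{-1})/(q-q^{-1})$ and the $q$-Serre relations, and then to match the resulting $U_q(\g_-)$-monomials with the ${\phi'}_m^i$.

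First I would fix $m \geq 2$ and note that $e_{\alpha_m}$ commutes with $f_{\alpha_0}$, so the position of the inserted $f_{\alpha_0}$ (which distinguishes the three indices $i=1,2,3$) is inert with respect to $e_{\alpha_m}$. Thus the upper index $i$ is preserved and the coefficient $B_m^i$ is a linear combination of the $A_k^i$ for those $k$ such that $\phi_k$ contains the letter $f_{\alpha_m}$. Inspecting the definition of $\phi_k$, one finds $f_{\alpha_m}$ appears (i) once as the leftmost factor of $\phi_m$, (ii) once in the final descending tail $f_{\alpha_{p+1}}\overset{>}{\ldots}f_{\alpha_1}$ of every $\phi_k$ with $k\geq m$, and (for odd $N$) a second time at $m=p+1$ because of the doubled factor $f_{\alpha_{p+1}}f_{\alpha_{p+1}}$. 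For each occurrence, commuting $e_{\alpha_m}$ past it yields a Cartan factor $[h_{\alpha_m}]_q$ evaluated on the weight of the sub-word to its right applied to $v_\la$. When the leftmost $f_{\alpha_m}$ of $\phi_m$ is removed one recovers ${\phi'}_m^i$ directly; when the $f_{\alpha_m}$ in the tail is removed, the Serre relations $[f_{\alpha_m}, f_{\alpha_j}]=0$ for $|m-j|\geq 2$ and the standard $q$-Serre identity for $|m-j|=1$ allow rewriting the residual monomial as ${\phi'}_m^i$ up to a scalar, picking up either $A_{m-1}^i$ or $A_{m+1}^i$ depending on which side of the removed letter one reshuffles. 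Checking that the remaining weight of the sub-word is $-\alpha_m$ in the relevant cases yields $[h_{\alpha_m}]_q\to[2]_q$ for the two occurrences inside $\phi_m$ and $[h_{\alpha_m}]_q\to 1$ for the neighbouring $\phi_{m\pm 1}$, producing the generic formula $B_m^k = A_{m-1}^k + [2]_q A_m^k + A_{m+1}^k$.

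The boundary cases require more care. For $m=1$, $e_{\alpha_1}$ has a nontrivial commutation with $f_{\alpha_0}$ via the $q$-Serre identity $e_{\alpha_1}f_{\alpha_0}f_{\alpha_1} = f_{\alpha_0}[h_{\alpha_1}]_q + f_{\alpha_1}f_{\alpha_0}e_{\alpha_1}$ (and its mirror), so the three placements $i=1,2,3$ of $f_{\alpha_0}$ are no longer inert: the Cartan eigenvalue that emerges depends on whether $f_{\alpha_0}$ lies to the right of the commutator position or not, and this shifts $\la_1$ by $\pm 1$, producing the factors $[\la_1-1]_q$, $[\la_1]_q$, $[\la_1+1]_q$ stated in (a.1)--(a.2). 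At the other end, the doubled $f_{\alpha_{p+1}}f_{\alpha_{p+1}}$ in $\phi_{p+1}$ (odd $N$) contributes an extra $A_{p+1}^i$ in $B_{p+1}^i$, and the mixed term $A_{p+1}^2$ in $B_{p+1}^{1,3}$ arises because removing the tail $f_{\alpha_{p+1}}$ in $\phi_{p+1}^2$ (whose $f_{\alpha_0}$ sits next to the rightmost $f_{\alpha_1}$) gives a monomial that the Serre relations rewrite as ${\phi'}_{p+1}^1$ or ${\phi'}_{p+1}^3$. For even $N$, the branched end of the Dynkin diagram forces separate treatment of $m=p-1,p,p+1$: here one uses $f_{\alpha_p}f_{\alpha_{p+1}}=f_{\alpha_{p+1}}f_{\alpha_p}$ and the identity $\phi_{p+1}=\phi_p$ noted in the text, together with the fact that ${\phi'}_{p+1}^1={\phi'}_p^1$ and ${\phi'}_{p+1}^3={\phi'}_p^3$, to read off the coefficients in (b.1)--(b.2).

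The main obstacle is purely bookkeeping: correctly tracking the Cartan eigenvalues as one commutes $[h_{\alpha_m}]_q$ through the tail to $v_\la$ (the tail's weight determines whether one gets $[2]_q$, $1$, or a $\la$-dependent $q$-integer), and, for the boundary values of $m$, correctly identifying the Serre-transformed residual monomials with the normal-form vectors ${\phi'}_m^i$ listed in the text. Since each $\phi_m^i$ has bounded length and $f_{\alpha_m}$ appears in at most a handful of positions, the enumeration is finite and, although tedious, mechanical; organising it by the value of $m$ (generic $2\leq m\leq p-1$, then $m=1$, then $m\in\{p,p+1\}$ by parity of $N$) reproduces the formulas exactly.
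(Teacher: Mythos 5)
Your proposal takes essentially the same route as the paper, which dispatches this lemma with the single remark that it is ``verified by a straightforward brute force calculation, which is omitted here'': your case-by-case bookkeeping of the commutators $[e_{\alpha_m},f_{\alpha_j}]$, the Cartan factors evaluated on the sub-word to the right, and the Serre-relation rewriting into the normal forms ${\phi'}_m^i$ is precisely that calculation, and its organization (generic $2\leqslant m\leqslant p-1$, then $m=1$, then $m\in\{p,p+1\}$ by parity) is consistent with the stated formulas, including the boundary mixing terms. One loose phrase should be corrected: $e_{\alpha_1}$ commutes exactly with $f_{\alpha_0}$ (distinct simple roots), and what produces the shifted factors $[\lambda_1\pm 1]_q$ is, as you then correctly say, that $(\alpha_0,\alpha_1)\neq 0$ changes the Cartan eigenvalue picked up at each $f_{\alpha_1}$ according to whether $f_{\alpha_0}$ sits to the right of it.
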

\noindent
This is verified by a straightforward brute force calculation, which is omitted here.


\begin{thebibliography}{A}
\bibitem{M1} Mudrov, A.: Non-Levi closed conjugacy classes of  $SO_q(N)$, J.Math.Phys, {\bf 54}, 081701 (2013), http://dx.doi.org/10.1063/1.4816625
\bibitem{M2} Mudrov, A.:
Quantum conjugacy classes of simple matrix groups, Commun. Math. Phys. {\bf 272}, 635 -- 660 (2007).
\bibitem{M3} Mudrov, A.: Non-Levi closed conjugacy classes of  $SP_q(N)$, Commun.Math.Phys., {\bf 317} (2013) 317 -- 345
\bibitem{AM} Ashton, T. and Mudrov, A.: On representations of quantum conjugacy classes of GL(n).  Lett. Math. Phys., {\bf 103},  1029-1045 (2013) .
\bibitem{AM2} Ashton, T. and Mudrov, A.: Tensor product of natural and generalized Verma modules.  in preparation.


\bibitem{D} Drinfeld,  V.:
   Quantum Groups. In Proc. Int. Congress of Mathematicians,
  Berkeley 1986, Gleason,  A. V.  (eds)  pp. 798--820, AMS, Providence (1987).
\bibitem{Ja}  Jantzen, J. C.:
Lectures on quantum groups.
Grad. Stud. in Math., {\bf 6},
AMS, Providence, RI (1996).
\bibitem{FRT} Faddeev, L.,  Reshetikhin, N., and Takhtajan, L.:
Quantization of Lie groups and Lie algebras.
 Leningrad Math. J., {\bf 1},  193--226 (1990).
\bibitem{M_Shap} Mudrov, A.: Orthogonal basis for the Shapovalov form on $A_n$,
arXiv:1206.3647.

\end{thebibliography}
\end{document}